\providecommand{\U}[1]{\protect\rule{.1in}{.1in}}
\newtheorem{theorem}{Theorem}[section]
\theoremstyle{plain}
\newtheorem{lemma}{Lemma}[section]
\newtheorem{remark}{Remark}
\numberwithin{equation}{section}
\begin{document}
\title[Optimizers for Caffarelli-Kohn-Nirenberg inequalities]{Sharp constants and optimizers for a class of the Caffarelli-Kohn-Nirenberg inequalities}
\author{Nguyen Lam}
\address{Department of Mathematics\\
University of Pittsburgh\\
Pittsburgh, PA 15260, USA}
\email{nhlam@pitt.edu}
\author{Guozhen Lu }
\address{Department of Mathematics\\
Wayne State University\\
Detroit, MI 48202, USA}
\email{gzlu@wayne.edu}
\thanks{Research of this work was partly supported by a US NSF grant DMS-1301595. The first author is partly supported by the AMS-Simons travel grant. The
second author is partly supported by a Simons Fellowship from the Simons
Foundation. }
\subjclass{Primary: 26D10; Secondary 46E35. }
\keywords{Caffarelli-Kohn-Nirenberg inequality; Symmetry; Maximizers; Sharp constants.}

\begin{abstract}
In this paper, we will use a suitable tranform to investigate the sharp
constants and optimizers for the following Caffarelli-Kohn-Nirenberg
inequalities for a wide range of parameters $(r,p,q,s,\mu,\sigma)$ and $0\leq
a\leq1$:
\begin{equation}
\left(
{\displaystyle\int}
\left\vert u\right\vert ^{r}\frac{dx}{\left\vert x\right\vert ^{s}}\right)
^{1/r}\leq C\left(
{\displaystyle\int}
\left\vert \nabla u\right\vert ^{p}\frac{dx}{\left\vert x\right\vert ^{\mu}%
}\right)  ^{a/p}\left(
{\displaystyle\int}
\left\vert u\right\vert ^{q}\frac{dx}{\left\vert x\right\vert ^{\sigma}%
}\right)  ^{\left(  1-a\right)  /q}. \tag{CKN}%
\end{equation}
We are able to compute the best constants and the explicit forms of the
extremal functions in numerous cases. When $0<a<1$, we can deduce the
existence and symmetry of optimizers for a wide range of parameters. Moreover,
in the particular classes $r=p\frac{q-1}{p-1}$ and $q=p\frac{r-1}{p-1}$, the
forms of maximizers will also be provided in the spirit of Del Pino and
Dolbeault \cite{DePDo1, DePDo}. In the case $a=1$, that is the
Caffarelli-Kohn-Nirenberg inequality without the interpolation term, we will
provide the exact maximizers for all the range of $\mu\geq0$. The
Caffarelli-Kohn-Nirenberg inequalities with arbitrary norms on the Euclidean
spaces will also be considered in the spirit of Cordero-Erausquin, Nazaret and
Villani \cite{CeNV}.

\end{abstract}
\maketitle

\section{Introduction}

Geometric and Functional inequalities have a wide range of applications and
play a crucial role in geometric analysis, partial differential equations and
other branches of modern mathematics. In many situations, the validity of the
inequality and some explicit bounds for its best constant are enough to run
the process. However, there are numerous circumstances that it is required to
know the exact sharp constant and information on extremal functions.

Among those inequalities, the Caffarelli-Kohn-Nirenberg (CKN) inequality is
one of the most important and interesting ones. It is worth noting that many
well-known and important inequalities such as Gagliardo-Nirenberg
inequalities, Sobolev inequalities, Hardy-Sobolev inequalities, Nash's
inequalities, etc are just the special cases of the CKN inequalities.

The CKN inequalities were first introduced in 1984 by Caffarelli, Kohn and
Nirenberg in their celebrated work \cite{CKN}:\vskip0.5cm

\textbf{Theorem A.} \textit{There exists a positive constant }$C=C\left(
N,r,p,q,\gamma,\alpha,\beta\right)  $\textit{ such that for all }$u\in
C_{0}^{\infty}\left(
\mathbb{R}
^{N}\right)  :$%
\begin{equation}
\left\Vert \left\vert x\right\vert ^{\gamma}u\right\Vert _{r}\leq C\left\Vert
\left\vert x\right\vert ^{\alpha}\left\vert \nabla u\right\vert \right\Vert
_{p}^{a}\left\Vert \left\vert x\right\vert ^{\beta}u\right\Vert _{q}^{1-a}
\label{*}%
\end{equation}
\textit{where}%
\begin{align*}
p,q  &  \geq1,~r>0,~0\leq a\leq1\\
\frac{1}{p}+\frac{\alpha}{N},~\frac{1}{q}+\frac{\beta}{N},~\frac{1}{r}%
+\frac{\gamma}{N}  &  >0\text{ where}\\
\gamma &  =a\sigma+\left(  1-a\right)  \beta\\
\frac{1}{r}+\frac{\gamma}{N}  &  =a\left(  \frac{1}{p}+\frac{\alpha-1}%
{N}\right)  +(1-a)\left(  \frac{1}{q}+\frac{\beta}{N}\right)  ,
\end{align*}
\textit{and}%
\begin{align*}
0  &  \leq\alpha-\sigma\text{ if }a>0\text{ and}\\
\alpha-\sigma &  \leq1\text{ if }a>0\text{ and }\frac{1}{p}+\frac{\alpha-1}%
{N}=\frac{1}{r}+\frac{\gamma}{N}.
\end{align*}

\vskip0.5cm

In this paper, if we perform the following change of exponents as in
\cite{ZhZ}:%
\[
\alpha=-\frac{\mu}{p},~\beta=-\frac{\theta}{q},~\gamma=-\frac{s}{r}.
\]
Then (\ref{*}) will become the following equivalent form:%
\begin{equation}
\left(  \int\limits_{%
\mathbb{R}
^{N}}\left\vert u\right\vert ^{r}\frac{dx}{\left\vert x\right\vert ^{s}%
}\right)  ^{1/r}\leq C\left(  \int\limits_{%
\mathbb{R}
^{N}}\left\vert \nabla u\right\vert ^{p}\frac{dx}{\left\vert x\right\vert
^{\mu}}\right)  ^{a/p}\left(  \int\limits_{%
\mathbb{R}
^{N}}\left\vert u\right\vert ^{q}\frac{dx}{\left\vert x\right\vert ^{\theta}%
}\right)  ^{\left(  1-a\right)  /q} \tag{CKN}%
\end{equation}
where
\[
a=\frac{\left[  \left(  N-\theta\right)  r-\left(  N-s\right)  q\right]
p}{\left[  \left(  N-\theta\right)  p-\left(  N-\mu-p\right)  q\right]  r}.
\]
We will restrict our consideration in this paper to the case $1<p<N.$

\vskip0.5cm

When $s=\mu=\theta=0$ and $a=1$, we recover the well-known Sobolev inequality:%
\begin{equation}
\left(  \int\limits_{%
\mathbb{R}
^{N}}\left\vert u\right\vert ^{p^{\ast}}dx\right)  ^{1/p^{\ast}}\leq S\left(
N,p\right)  \left(  \int\limits_{%
\mathbb{R}
^{N}}\left\vert \nabla u\right\vert ^{p}dx\right)  ^{1/p} \label{S}%
\end{equation}
where $p^{\ast}=\frac{Np}{N-p}$. This inequality has important applications in
many areas of mathematics and there is a vast literature. We just mention here
that when $p>1$, the best constant $S\left(  N,p\right)  $ was found in the
works of Aubin \cite{Aubin} and Talenti \cite{Talenti} using rather classical
tools such as Schwarz rearrangement, and solution of a particular
one-dimensional problem, the Bliss inequality. The case $p=2$ was explored
more by Beckner in \cite{Beck} due to its conformal invariance. For $p=1$, it
has been known that the Sobolev inequality is equivalent to the classical
Euclidean isoperimetric inequality.

\smallskip

When $a=1$, $\mu=0$, $0\leq s\leq p<N$ and $r=p^{\ast}\left(  s\right)
=\frac{N-s}{N-p}p,$ the CKN inequality becomes the Hardy-Sobolev (HS)
inequality that is the interpolation of the Sobolev inequality and the Hardy
inequality:%
\begin{equation}
\left(  \int\limits_{%
\mathbb{R}
^{N}}\left\vert u\right\vert ^{p^{\ast}\left(  s\right)  }\frac{dx}{\left\vert
x\right\vert ^{s}}\right)  ^{1/p^{\ast}\left(  s\right)  }\leq HS\left(
N,p,s\right)  \left(  \int\limits_{%
\mathbb{R}
^{N}}\left\vert \nabla u\right\vert ^{p}dx\right)  ^{1/p}. \label{HS}%
\end{equation}
In this situation, in \cite{Lieb1} Lieb applied the symmetrization arguments
to study (\ref{HS}) in the case $p=2$ and gave the best constants and explicit
optimizers. The study of the best constant $HS\left(  N,p,s\right)  $ and
extremal functions for the inequalities (\ref{HS}) in the general range goes
back to Ghoussoub and Yuan in \cite{GY} and maybe even earlier (see references
in \cite{GY}): The maximizers for the HS inequality when $0\leq s<p<N$ are the
functions
\begin{equation}
u_{c,\lambda}\left(  x\right)  =c\left(  \lambda+\left\vert x\right\vert
^{\frac{p-s}{p-1}}\right)  ^{-\frac{N-p}{p-s}}\text{ for some }c\neq0\text{,
}\lambda>0\text{.} \label{HSE}%
\end{equation}
Actually, $u_{c,\lambda}$ (after rescaling) is the only positive radial
solutions of
\[
-\operatorname{div}\left(  \left\vert \nabla u\right\vert ^{p-2}\nabla
u\right)  =\frac{u^{p^{\ast}\left(  s\right)  -1}}{\left\vert x\right\vert
^{s}}\text{ on }%
\mathbb{R}
^{N}.
\]

When $a=1$ and $0<\mu,s<N$, the CKN inequality does not contain the
interpolation term. There are great efforts to investigate the sharp
constants, existence/nonexistence and symmetry/symmetry breaking of maximizers
in this situation, especially when $p=2$. See \cite{BM, CW, ChCh, DePDoFT,
FMT, WW}, among others. For instance, Chou and Chu considered the case $p=2$
and $\frac{\mu}{2}\leq\frac{s}{r}\leq\frac{\mu}{2}+1$ and provided the best
constants and explicit optimizers. In \cite{WW}, Wang and Willem studied the
compactness of all maximizing sequences up to dilations in the spirit of Lions
\cite{Lions1, Lions2, Lions3, Lions4}. In \cite{CW}, Catrina and Wang
investigated the class of $p=2$ and $\mu<0$ and established the
attainability/inattainability and symmetry breaking of extremal functions.
Caldiroli and Musina studied the symmetry breaking of extremals for the CKN
inequalities in a non-Hilbertian setting in \cite{CM}. In a recent paper
\cite{DEL}, Dolbeault, Esteban and Loss studied the characterization of the
optimal symmetry breaking region in HS inequalities with $p=2$. As a
consequence, maximizers and best constants are calculated in the symmetry
region. Their result solves a longstanding conjecture on the optimal symmetry range.

\smallskip

In the case $0<a<1$, the CKN inequality includes the interpolation term. This
situation is much harder to study. When there is no singular term, i.e.
$s=\theta=\mu=0$, the non-weighted CKN inequality, namely the
Gagliardo-Nirenberg inequality, has been studied at length by many authors,
see e.g., \cite{BFV, DELL, DFLP, HM}, to mention just a few. Especially, for
very particular classes, the best constant and the maximizers for the
Gagliardo-Nirenberg inequality are provided explicitly by Del Pino and
Dolbeault in \cite{DePDo1, DePDo}. Indeed, in the special class $r=p\frac
{q-1}{p-1}$, Del Pino and Dolbeault proved that the maximizers for the
Gagliardo-Nirenberg inequality have the form $A\left(  1+B\left\vert
x-\overline{x}\right\vert ^{\frac{p}{p-1}}\right)  ^{-\frac{p-1}{q-p}}$ while
in the case $q=p\frac{r-1}{p-1}$, the optimizers are $A\left(  1-B\left\vert
x-\overline{x}\right\vert ^{\frac{p}{p-1}}\right)  _{+}^{-\frac{p-1}{r-p}}$,
for some $A\in%
\mathbb{R}
$, $B>0$ and $\overline{x}\in%
\mathbb{R}
^{N}$. See also \cite{Agu1, Agu2} where Agueh gives a proof by studying a
$p-$Laplacian type equation and by transforming the unknown of the equation
via some change of functions. We also cite \cite{CeNV} where
Cordero-Erausquin, Nazaret, and Villani set up a beautiful link between
optimal transportation and certain Sobolev inequalities and
Gagliardo-Nirenberg inequalities.

\smallskip

However, as far as we know, there are only a few papers concerning the full
weighted CKN inequalities (i.e., $0<a<1$ and at least one of $s, \mu, \theta$
is nonzero), see the review paper by Dolbeault and Esteban \cite{DE0}.
Compared with the special cases of the Gagliardo-Nirenberg inequalities
without the interpolation term (i.e., $a=1$), dealing with such CKN
inequalities encounters considerably more difficulty. For instance, the
Fourier analysis techniques cannot be applied in this setting. Moreover, the
classical Schwarz rearrangement, which is based on an isoperimetric
inequality, is unavailable due to the presence of singular terms (i.e., the
weights $\frac{1}{|x|^{s}}$, $\frac{1}{|x|^{\theta}}$ and $\frac{1}{|x|^{\mu}%
}$). It is worth noting that symmetrization has been a very useful and
efficient (and almost inevitable) method when dealing with the sharp geometric
inequalities. Hence in general we are not able to reduce to work our problem
on CKN inequalities to the case of radial setting. Actually, the problem of
symmetry and symmetry breaking of optimizers for the CKN inequalities has been
investigated by many researchers, see \cite{DE0, DE2, DETT} for instance.

\smallskip

In the particular case $p=q=2$, $-\frac{N-2}{2}<\alpha$, $\beta=\alpha-1$,
$\alpha-1\leq\gamma<\alpha$, $r=\frac{2N}{N+2\left(  \gamma-\alpha\right)  }$,
together with other conditions, the best constant and maximizers for CKN
inequality (\ref{*}) in Theorem A together with the weighted logarithmic
inequality are investigated by Dolbeault and Esteban \cite{DE1}, Dolbeault,
Esteban, Tarantello and Tertikas \cite{DETT}. The authors can also give the
exact sharp constants and the form of maximizers, however, only in the radial setting.

\smallskip

Concerning the inequality (CKN), for the special class $q=\frac{p\left(
r-1\right)  }{p-1}$, $1<p<r$, $N-\theta<\left(  1+\frac{\mu}{p}-\frac{\theta
}{p}\right)  \frac{\left(  r-1\right)  p}{r-p}$ and%
\[
s=\frac{\mu}{p}+1+\frac{p-1}{p}\theta,
\]
Xia could guess and then verify in \cite{Xia} that $\left(  \lambda+\left\vert
x\right\vert ^{1+\frac{\mu}{p}-\frac{\theta}{p}}\right)  ^{-\frac{p-1}{r-p}}$,
$\lambda>0$, are extremal functions. But he could not prove that these are the
all possible optimizers. Moreover, this case does not cover the interesting
situations in \cite{DePDo1, DePDo}.

\smallskip

In \cite{ZhZ}, Zhong and Zou study the existence of extremal functions for the
CKN inequality under a wider region, and use it to set up the continuity and
compactness of embeddings on weighted Sobolev spaces. However, there is no
information about the maximizers provided there.

In a very recent paper \cite{DMN}, the authors studied the CKN inequality in
the regime $s=\theta>0$, $p=2$ and $r=2\left(  q-1\right)  >2$. In this case,
they were able to show that for $s=\theta>0$ small enough, then the CKN
inequality can be achieved by the optimizers of the form $\left(  1+\left\vert
x\right\vert ^{2-s}\right)  ^{-\frac{1}{q-2}}$, up to multiplications by a
constant and scalings.

In \cite{DoLaLu, DoLu, Lam2}, when dealing with the sharp singular
Trudinger-Moser inequalities, which can be considered as the limiting Sobolev
embeddings, where again the classical Schwarz rearrangement could not be used,
the authors propose a new approach. Namely, we define a new Kelvin-type
transform to convert those sharp singular inequalities to the nonweighted
ones. Moreover, in \cite{DoLaLu}, we treat successfully the CKN inequalities
in the special case $p=N,$ $\mu=0,$ $0\leq s=\theta<N$, $1\leq q<r$ and
$a=1-\frac{q}{r}$ using this new transform. Especially, for a 1-parameter
family of inequalities, the best constants and the maximizers for the CKN
inequality are calculated explicitly there.

\vskip0.5cm

Motivated by results in \cite{DoLaLu} and \cite{Agu1, Agu2, CeNV, DePDo, DMN},
in this paper, we will use convenient vector fields to investigate the CKN
inequality in some special regions. Our main idea is that under our suitable
transforms, the CKN inequalities can be converted to the simpler versions,
namely, the Hardy-Sobolev inequalities and Gagliardo-Nirenberg inequalities.
Since the sharp constants and optimizers of those inequalities are easier to
study, and are known in some particular classes, we can get the best constants
and maximizers for CKN inequalities in the corresponding regions.

\smallskip

More precisely, we study the extremal functions for CKN inequality involving
the interpolation term (i..e., $0<a<1$). We will consider the following class:%

\begin{align}
1  &  <p<\text{ }p+\mu<N\text{, }\theta\leq\frac{N\mu}{N-p}\leq s<N,
\tag{C1}\label{C1}\\
1  &  \leq q<r<\frac{Np}{N-p};\text{ }a=\frac{\left[  \left(  N-\theta\right)
r-\left(  N-s\right)  q\right]  p}{\left[  \left(  N-\theta\right)  p-\left(
N-\mu-p\right)  q\right]  r}.\nonumber
\end{align}

Denote $D_{\mu,\theta}^{p,q}\left(
\mathbb{R}
^{N}\right)  $ the completion of the space of smooth compactly supported
functions with the norm $\left(  \int\limits_{%
\mathbb{R}
^{N}}\left\vert \nabla u\right\vert ^{p}\frac{dx}{\left\vert x\right\vert
^{\mu}}\right)  ^{1/p}+\left(  \int\limits_{%
\mathbb{R}
^{N}}\left\vert u\right\vert ^{q}\frac{dx}{\left\vert x\right\vert ^{\theta}%
}\right)  ^{1/q}$, and set
\[
CKN\left(  N,\mu,\theta, s, p, q, r\right)  =\sup_{u\in D_{\mu,\theta}%
^{p,q}\left(
\mathbb{R}
^{N}\right)  }\frac{\left(  \int\limits_{%
\mathbb{R}
^{N}}\left\vert u\right\vert ^{r}\frac{dx}{\left\vert x\right\vert ^{s}%
}\right)  ^{1/r}}{\left(  \int\limits_{%
\mathbb{R}
^{N}}\left\vert \nabla u\right\vert ^{p}\frac{dx}{\left\vert x\right\vert
^{\mu}}\right)  ^{\frac{a}{p}}\left(  \int\limits_{%
\mathbb{R}
^{N}}\left\vert u\right\vert ^{q}\frac{dx}{\left\vert x\right\vert ^{\theta}%
}\right)  ^{\frac{1-a}{q}}}.
\]
Then we have the following result:

\begin{theorem}
\label{T4}Assume (C1). Then $CKN\left(  N,\mu,\theta,s,p,q,r\right)  $ can be
achieved. Moreover, all the extremal functions of $CKN\left(  N,\mu
,\theta,s,p,q,r\right)  $ are radially symmetric.
\end{theorem}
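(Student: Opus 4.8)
The strategy is to transport the CKN inequality under (C1) to a Hardy--Sobolev inequality of the form (\ref{HS}), for which the extremals (\ref{HSE}) are explicitly known and radially symmetric. The first step is to introduce a Kelvin-type change of variables, in the spirit of \cite{DoLaLu}, adapted to the weights $|x|^{-\mu}$, $|x|^{-\theta}$ and $|x|^{-s}$ simultaneously. Writing $x = \rho\omega$ with $\rho=|x|$, $\omega\in S^{N-1}$, one looks for a radial power substitution $y = |x|^{t-1}x$ (equivalently $|y| = |x|^{t}$) together with $u(x) = |x|^{\kappa} v(y)$, with exponents $t>0$ and $\kappa$ to be determined. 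A direct computation of $\int|\nabla u|^p |x|^{-\mu}\,dx$, $\int|u|^q|x|^{-\theta}\,dx$ and $\int|u|^r|x|^{-s}\,dx$ in the new variable $y$ produces three weighted integrals whose exponents are affine functions of $t$ and $\kappa$; the conditions in (C1) (in particular $\theta\le \frac{N\mu}{N-p}\le s$ and $1\le q<r<\frac{Np}{N-p}$) are precisely what is needed so that one can choose $t,\kappa$ making the $v$-gradient term weightless and the two $v$-zeroth-order terms carry equal powers, i.e. so that the transformed inequality is exactly a Hardy--Sobolev inequality (or, when the zeroth-order weight vanishes, a Sobolev/Gagliardo--Nirenberg inequality). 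One must check that the transform is a bijection of $D^{p,q}_{\mu,\theta}(\mathbb{R}^N)$ onto the appropriate Hardy--Sobolev space and that it preserves the supremum; this is where the constraint $1<p<p+\mu<N$ guarantees non-degeneracy of the Jacobian and well-posedness of the function spaces.

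The second step is to invoke the known sharp form of the target inequality. Once the CKN quotient has been rewritten as a constant times the Hardy--Sobolev quotient for parameters $(N,p,\tilde s)$ with $0\le \tilde s<p<N$ (the admissible range being guaranteed by (C1)), existence of an extremal follows from the classical results recalled around (\ref{HS})--(\ref{HSE}): the supremum is attained, and every nonnegative extremal is, after scaling, of the form $c(\lambda+|y|^{\frac{p-\tilde s}{p-1}})^{-\frac{N-p}{p-\tilde s}}$, which is radial in $y$. (When $0<a<1$ forces the zeroth-order weight to survive after the transform, one must instead quote the Gagliardo--Nirenberg/CKN existence-and-radiality statement proved in the earlier sections of the paper; in either case the target inequality has radial extremals.) Then one pulls back: if $v$ is an extremal for the transformed inequality, $u(x) = |x|^{\kappa} v(|x|^{t-1}x)$ is an extremal for CKN, so $CKN(N,\mu,\theta,s,p,q,r)$ is achieved.

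The third step is the radial symmetry assertion for \emph{all} extremals of CKN. Since the transform is a bijection that intertwines the two variational problems, the extremal set of CKN is exactly the image under pull-back of the extremal set of the target Hardy--Sobolev problem. Every extremal of the latter is radial in $y$; and $v$ being radial in $y=|x|^{t-1}x$ means $v(y)=v(|y|)=v(|x|^t)$ depends only on $|x|$, so $u(x)=|x|^{\kappa}v(|x|^t)$ is radial in $x$. Hence all extremals of CKN are radially symmetric.

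The main obstacle is the bookkeeping of the first step: one has to verify that the algebraic system for $(t,\kappa)$ is solvable \emph{within} the parameter region (C1) and that the resulting $\tilde s$ lies in $[0,p)$ so that the Hardy--Sobolev theory genuinely applies — in particular the role of the endpoints $\theta=\frac{N\mu}{N-p}$ and $s=\frac{N\mu}{N-p}$ needs to be tracked carefully, since these are exactly the cases where the transform collapses the problem to pure Sobolev versus genuine Hardy--Sobolev. A secondary technical point is justifying that the change of variables is an isometry between the relevant Banach spaces (density of smooth functions, behavior near $0$ and $\infty$ under the map $|y|=|x|^t$); this is routine given $1<p<p+\mu<N$ but must be stated. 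Once these are in place, existence and radial symmetry follow immediately from the known Hardy--Sobolev facts without any further rearrangement or symmetrization argument.
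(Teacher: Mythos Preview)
Your overall strategy---use a radial power change of variables $y=|x|^{t-1}x$ to strip the weight off the gradient term and then appeal to a better-understood target inequality---is indeed the paper's approach. But there is a genuine gap in your execution, located in the repeated claim that the transform ``is an isometry between the relevant Banach spaces'' and ``intertwines the two variational problems'' so that ``the extremal set of CKN is exactly the image under pull-back of the extremal set of the target problem.''

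This is false for the gradient term. For the zeroth-order integrals the change of variables is an equality, but for the gradient one only obtains an \emph{inequality}
\[
\int_{\mathbb{R}^N}\frac{|\nabla(u\circ L)(x)|^{p}}{|x|^{d(p+\mu-N)+N-p}}\,dx \;\le\; \int_{\mathbb{R}^N}\frac{|\nabla u(y)|^{p}}{|y|^{\mu}}\,dy,
\]
because the Jacobian of $L(x)=|x|^{d-1}x$ is not a scalar multiple of the identity: it stretches the radial and tangential directions by different factors. Equality holds if and only if $\nabla u(y)$ is parallel to $y$ a.e., i.e.\ if and only if $u$ is radial (this is the content of the paper's Lemmas~\ref{l2.1} and~\ref{l1}). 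So the map does not carry the CKN quotient to the target quotient; it only gives a one-sided comparison between the two suprema. Introducing an extra factor $|x|^{\kappa}$ does not repair this, since it adds a further lower-order term to $\nabla u$ without making the Jacobian isotropic.

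Consequently your symmetry argument breaks down. The paper's actual argument runs as follows. First one proves directly (Schwarz rearrangement plus a compactness argument, Lemmas~\ref{l4.1}--\ref{l4.2}) that the target $GN$ problem, with $\mu=0$ but still carrying two distinct weights $|x|^{-(N+sd-Nd)}$ and $|x|^{-(N+\theta d-Nd)}$, has a radial maximizer $U_{0}$; there is nothing ``earlier in the paper'' to quote here, since Theorem~\ref{T4} is the first such statement. Pulling $U_{0}$ back gives existence for CKN together with the exact relation $CKN=d^{\,c}\,GN$. Then, for radial symmetry of an \emph{arbitrary} CKN maximizer $V_{0}$, one pushes $V_{0}$ forward and runs the chain of inequalities in Lemma~\ref{l4.4}: because $V_{0}$ attains $CKN=d^{\,c}\,GN$, equality is forced everywhere, in particular in the gradient inequality above, and Lemma~\ref{l2.1} then yields that $V_{0}$ is radial. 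Your proposal replaces both of these steps by an appeal to a bijection that does not exist.

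A secondary issue: under the full range of (C1) with $\theta<s$, two free exponents $(t,\kappa)$ cannot simultaneously kill the gradient weight and equate the two zeroth-order weights---that is three affine conditions on two unknowns. The paper does not attempt this; after choosing $d=\frac{N-p}{N-p-\mu}$ so that the gradient becomes unweighted, the target is a genuinely two-weighted Gagliardo--Nirenberg problem (with one nonpositive and one nonnegative exponent), not a Hardy--Sobolev inequality, and its existence/radiality is established from scratch.
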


Moreover, we will also give the explicit forms for all maximizers and the
exact best constant for $CKN\left(  N,\mu,\theta,s,p,q,r\right)  $ in the
following special cases:

\begin{theorem}
\label{T5}Assume (C1) with $\theta=s=\frac{N\mu}{N-p}$. If $r=p\frac{q-1}%
{p-1}$, then with $\delta=Np-q\left(  N-p\right)  :$
\begin{align*}
&  CKN\left(  N,\mu,\theta,s,p,q,r\right) \\
&  =\left(  \frac{N-p}{N-p-\mu}\right)  ^{\frac{1}{r}+\frac{p-1}{p}-\frac
{1-a}{q}-\frac{p-1}{p}\left(  1-a\right)  }\\
&  \times\left(  \frac{q-p}{p\sqrt{\pi}}\right)  ^{a}\left(  \frac
{pq}{N\left(  q-p\right)  }\right)  ^{\frac{a}{p}}\left(  \frac{\delta}%
{pq}\right)  ^{\frac{1}{r}}\left(  \frac{\Gamma\left(  q\frac{p-1}%
{q-p}\right)  \Gamma\left(  \frac{N}{2}+1\right)  }{\Gamma\left(  \frac
{p-1}{p}\frac{\delta}{q-p}\right)  \Gamma\left(  N\frac{p-1}{p}+1\right)
}\right)  ^{\frac{a}{N}}%
\end{align*}
and all the maximizers have the form%
\[
V_{0}\left(  x\right)  =A\left(  1+B\left\vert x\right\vert ^{\frac{N-p-\mu
}{N-p}\frac{p}{p-1}}\right)  ^{-\frac{p-1}{q-p}}\text{ for some }A\in%
\mathbb{R}
\text{, }B>0\text{.}%
\]

\end{theorem}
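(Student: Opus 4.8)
The plan is to reduce Theorem \ref{T5} to the Del Pino--Dolbeault Gagliardo--Nirenberg inequality via the Kelvin-type transform announced in the introduction, and then transport the known extremals and sharp constant back through the transform. Since we are in the subclass $\theta=s=\frac{N\mu}{N-p}$ of (C1), the chain rule $x\mapsto |x|^{k-1}x$ (equivalently, in spherical coordinates, $\rho=|x|^{k}$ for a suitable $k$) should turn all three weighted integrals simultaneously into unweighted ones. Concretely, I would look for the exponent $k$ (here I expect $k=\frac{N-p-\mu}{N-p}$, consistent with the power $|x|^{\frac{N-p-\mu}{N-p}\frac{p}{p-1}}$ appearing in the claimed maximizer) such that, setting $v(y)=u(x)$ with $|y|=|x|^{k}$, one has
\begin{align*}
\int_{\mathbb{R}^{N}}|\nabla u|^{p}\frac{dx}{|x|^{\mu}}
 &= c_{1}\int_{\mathbb{R}^{N}}|\nabla v|^{p}\,dy,\\
\int_{\mathbb{R}^{N}}|u|^{q}\frac{dx}{|x|^{\theta}}
 &= c_{2}\int_{\mathbb{R}^{N}}|v|^{q}\,dy,\qquad
\int_{\mathbb{R}^{N}}|u|^{r}\frac{dx}{|x|^{s}}
 = c_{3}\int_{\mathbb{R}^{N}}|v|^{r}\,dy,
\end{align*}
for explicit constants $c_{1},c_{2},c_{3}$ depending on $N,p,\mu$. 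The equality $\theta=s=\frac{N\mu}{N-p}$ is exactly the algebraic condition that makes the Jacobian powers line up so that the same $k$ works for all three integrals; I would verify this first, computing $dx=k^{-N}|y|^{N(\frac{1}{k}-1)}dy$ and the gradient transformation $|\nabla_{x}u| = k|x|^{k-1}|\nabla_{y}v| = k|y|^{\frac{k-1}{k}}|\nabla_{y}v|$, and matching exponents.

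**Next**, the admissibility condition $r=p\frac{q-1}{p-1}$ is preserved under the transform (it involves only the exponents $p,q,r$, which are unchanged), so the transformed inequality is precisely the Gagliardo--Nirenberg inequality in the Del Pino--Dolbeault class $r=p\frac{q-1}{p-1}$ on $\mathbb{R}^{N}$. By \cite{DePDo1, DePDo} (which I may invoke as a known result), its sharp constant is explicit and its extremals are exactly $v(y)=A(1+B|y|^{\frac{p}{p-1}})^{-\frac{p-1}{q-p}}$. Pulling back through $|y|=|x|^{k}$ with $k=\frac{N-p-\mu}{N-p}$ gives the claimed form $V_{0}(x)=A(1+B|x|^{k\frac{p}{p-1}})^{-\frac{p-1}{q-p}}$, and the sharp constant $CKN=c_{3}^{1/r}c_{1}^{-a/p}c_{2}^{-(1-a)/q}\cdot\mathrm{GN}$, where $\mathrm{GN}$ is the Del Pino--Dolbeault constant. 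The factor $\left(\frac{N-p}{N-p-\mu}\right)^{\frac{1}{r}+\frac{p-1}{p}-\frac{1-a}{q}-\frac{p-1}{p}(1-a)}$ in the statement is the bookkeeping of the powers of $k=\frac{N-p-\mu}{N-p}$ coming from $c_{1},c_{2},c_{3}$; the Gamma-function factors and the $\left(\frac{q-p}{p\sqrt{\pi}}\right)^{a}$, $\left(\frac{pq}{N(q-p)}\right)^{a/p}$, $\left(\frac{\delta}{pq}\right)^{1/r}$ terms are exactly the Del Pino--Dolbeault constant $\mathrm{GN}(N,p,q)$ rewritten with $\delta=Np-q(N-p)$. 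I would cross-check these by substituting the extremal $V_{0}$ directly into the ratio, which reduces to one-dimensional Beta-integrals of the form $\int_{0}^{\infty}t^{m-1}(1+t)^{-n}\,dt=B(m,n-m)$.

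**The one genuine subtlety**, and the step I expect to be the main obstacle, is the rigorous passage of the \emph{variational problem} — not just the inequality — through the transform, i.e. showing that $u\in D^{p,q}_{\mu,\theta}(\mathbb{R}^{N})$ if and only if $v\in \dot W^{1,p}\cap L^{q}(\mathbb{R}^{N})$, with the extremal-to-extremal correspondence being a bijection, so that \emph{all} maximizers (not merely the radial ones) are accounted for. This requires the density/completeness compatibility of the two function spaces under the change of variables, and it is here that one uses Theorem \ref{T4}: since every extremal of $CKN$ is already known to be radially symmetric, it suffices to run the correspondence on radial functions, where $|y|=|x|^{k}$ is a genuine $C^{1}$ diffeomorphism of $(0,\infty)$ and no issues at the origin or at infinity arise beyond standard approximation. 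Thus the logical order is: (i) establish the transform identities for smooth compactly supported radial functions; (ii) invoke Theorem \ref{T4} to restrict to radial extremals; (iii) apply \cite{DePDo1,DePDo} to the transformed radial problem; (iv) transport constant and extremals back; (v) verify the closed-form constant by direct Beta-integral computation. Routine but lengthy are the exponent-matching in (i) and the constant identification in (v); everything else is bookkeeping.
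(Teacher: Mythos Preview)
Your proposal is correct and follows essentially the same route as the paper: the transform $|y|=|x|^{k}$ with $k=\frac{N-p-\mu}{N-p}$ is exactly the paper's $D_{N,d,p}$ with $d=1/k=\frac{N-p}{N-p-\mu}$, Theorem~\ref{T4} is invoked to restrict to radial functions (where the gradient identity is an equality rather than the inequality of Lemma~\ref{l1}), and the Del Pino--Dolbeault result is applied to the resulting unweighted Gagliardo--Nirenberg problem. One small correction: the Del Pino--Dolbeault extremals carry a translation parameter, $v(y)=A(1+B|y-\bar{y}|^{p/(p-1)})^{-(p-1)/(q-p)}$ with $\bar{y}\in\mathbb{R}^{N}$ arbitrary, so pulling back gives $V_{0}(x)=A'(1+B\big||x|^{1/d-1}x-\bar{y}\big|^{p/(p-1)})^{-(p-1)/(q-p)}$; the paper then uses the radiality of $V_{0}$ (from Theorem~\ref{T4}) to force $\bar{y}=0$, and you should make this step explicit in order to conclude that \emph{all} maximizers have the stated form.
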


\begin{theorem}
\label{T6}Assume (C1) with $\theta=s=\frac{N\mu}{N-p}$. If $q=p\frac{r-1}%
{p-1}$, then with $\delta=Np-r\left(  N-p\right)  :$
\begin{align*}
&  CKN\left(  N,\mu,\theta,s,p,q,r\right)  \\
&  =\left(  \frac{N-p}{N-p-\mu}\right)  ^{\frac{1}{r}+\frac{p-1}{p}-\frac
{1-a}{q}-\frac{p-1}{p}\left(  1-a\right)  }\\
&  \times\left(  \frac{p-r}{p\sqrt{\pi}}\right)  ^{a}\left(  \frac
{pr}{N\left(  p-r\right)  }\right)  ^{\frac{a}{p}}\left(  \frac{pr}{\delta
}\right)  ^{\frac{1-a}{q}}\left(  \frac{\Gamma\left(  \frac{p-1}{p}%
\frac{\delta}{p-r}+1\right)  \Gamma\left(  \frac{N}{2}+1\right)  }%
{\Gamma\left(  r\frac{p-1}{p-r}+1\right)  \Gamma\left(  N\frac{p-1}%
{p}+1\right)  }\right)  ^{\frac{a}{N}}.
\end{align*}
If $r>2-\frac{1}{p}$, then all the maximizers have the form%
\[
V_{0}\left(  x\right)  =A\left(  1-B\left\vert x\right\vert ^{\frac{N-p-\mu
}{N-p}\frac{p}{p-1}}\right)  _{+}^{-\frac{p-1}{r-p}}\text{ for some }A\in%
\mathbb{R}
\text{, }B>0\text{.}%
\]

\end{theorem}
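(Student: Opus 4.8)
\noindent\textit{Proof strategy.}

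The plan is to transport $CKN\left(N,\mu,\theta,s,p,q,r\right)$, via a radial change of variables, to the sharp Gagliardo--Nirenberg inequality in the Del Pino--Dolbeault class $q=p\frac{r-1}{p-1}$ on $\mathbb{R}^{N}$, and then read off the constant and the optimizers from \cite{DePDo1,DePDo}. First I would invoke Theorem \ref{T4}: since $\theta=s=\frac{N\mu}{N-p}$ is admissible under (C1), that theorem already guarantees that $CKN\left(N,\mu,\theta,s,p,q,r\right)$ is attained and that every optimizer is radially symmetric about the origin, so it suffices to analyze the functional on radial functions.

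On radial functions I would set $u(x)=v\!\left(|x|^{\beta-1}x\right)$ with
\[
\beta=\frac{N-p-\mu}{N-p}\in(0,1],
\]
the point being that $s=\frac{N\mu}{N-p}$ is exactly the condition for which this $\beta$ turns all three weighted integrals into \emph{unweighted} integrals over $\mathbb{R}^{N}$ (for a general $s=\theta$ one would instead land in a fractional dimension). Concretely, passing to polar coordinates and substituting $\sigma=\rho^{\beta}$ gives, for radial $u$,
\[
\int_{\mathbb{R}^{N}}|\nabla u|^{p}\frac{dx}{|x|^{\mu}}=\beta^{p-1}\int_{\mathbb{R}^{N}}|\nabla v|^{p}\,dy,\qquad \int_{\mathbb{R}^{N}}|u|^{t}\frac{dx}{|x|^{s}}=\frac{1}{\beta}\int_{\mathbb{R}^{N}}|v|^{t}\,dy\quad(t=q,r).
\]
Hence $u\mapsto v$ is a bijection between the radial sectors of $D^{p,q}_{\mu,\theta}(\mathbb{R}^{N})$ and of $D^{p,q}_{0,0}(\mathbb{R}^{N})$ that multiplies the CKN quotient by the fixed factor $\beta^{-\frac1r-\frac{a(p-1)}{p}+\frac{1-a}{q}}=\left(\frac{N-p}{N-p-\mu}\right)^{\frac1r+\frac{p-1}{p}-\frac{1-a}{q}-\frac{p-1}{p}(1-a)}$. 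One checks that under (C1) with $\theta=s=\frac{N\mu}{N-p}$ the exponent $a$ is precisely the value dictated by scale invariance of the Gagliardo--Nirenberg quotient (and that $q=p\frac{r-1}{p-1}$ forces $r<p$), so the transported problem is genuinely the Gagliardo--Nirenberg sup on $\mathbb{R}^{N}$; since the latter admits a radial maximizer, passing from radial to all functions does not change it, and therefore
\[
CKN\left(N,\mu,\theta,s,p,q,r\right)=\left(\frac{N-p}{N-p-\mu}\right)^{\frac1r+\frac{p-1}{p}-\frac{1-a}{q}-\frac{p-1}{p}(1-a)}\cdot GN,
\]
where $GN$ denotes the sharp Gagliardo--Nirenberg constant on $\mathbb{R}^{N}$ in this class.

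For the explicit value I would evaluate the Gagliardo--Nirenberg functional at the Del Pino--Dolbeault extremal $W(y)=\left(1-|y|^{p/(p-1)}\right)_{+}^{-\frac{p-1}{r-p}}$, where every integral collapses to a Beta function; inserting the resulting closed form for $GN$ (with $\delta=Np-r(N-p)>0$) into the identity above and simplifying yields the stated formula. For the optimizers, let $u^{\ast}$ achieve $CKN\left(N,\mu,\theta,s,p,q,r\right)$; by Theorem \ref{T4} it is radial, so $v^{\ast}=T(u^{\ast})$ is a radial Gagliardo--Nirenberg maximizer. When $r>2-\frac1p$, equivalently $q=p\frac{r-1}{p-1}>1$, the Del Pino--Dolbeault characterization \cite{DePDo} applies and forces $v^{\ast}(y)=A\left(1-B|y-\bar y|^{p/(p-1)}\right)_{+}^{-\frac{p-1}{r-p}}$ for some $A\in\mathbb{R}$, $B>0$, $\bar y\in\mathbb{R}^{N}$; radiality of $v^{\ast}$ forces $\bar y=0$, and transforming back gives
\[
u^{\ast}(x)=A\left(1-B|x|^{\frac{N-p-\mu}{N-p}\cdot\frac{p}{p-1}}\right)_{+}^{-\frac{p-1}{r-p}}=V_{0}(x),
\]
while conversely each such $V_{0}$ is a maximizer because it corresponds under $T$ to a Del Pino--Dolbeault extremal.

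The step requiring the most care is the passage through the change of variables: one must verify that $u\mapsto v$ is an honest isomorphism (up to the explicit scalar) between the radial sectors of the two function spaces, respecting completions and density of smooth functions, and that the full list of constraints in (C1) together with $\theta=s=\frac{N\mu}{N-p}$ is exactly compatible with --- and forced by --- the Gagliardo--Nirenberg scaling identity, so that no extraneous restriction is introduced and the transported inequality is \emph{exactly} the one whose sharp form is known. Granting that, the remaining work --- the polar-coordinate computation of the scalars $\beta^{p-1}$, $\beta^{-1}$ and the Gamma-function bookkeeping needed to match the displayed constant --- is routine rather than conceptual, and most of the risk there is arithmetic.
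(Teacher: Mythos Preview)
Your proposal is correct and follows essentially the same route as the paper. The paper's transform $D_{N,d,p}u(x)=d^{-(p-1)/p}u(|x|^{d-1}x)$ with $d=\frac{N-p}{N-p-\mu}$ is exactly the inverse of your map $u(x)=v(|x|^{\beta-1}x)$ with $\beta=1/d$, and the paper likewise reduces to the unweighted Gagliardo--Nirenberg problem, invokes the Del Pino--Dolbeault constant and classification, and uses the already-established radial symmetry of CKN maximizers (your Theorem~\ref{T4}, their Lemma~\ref{l4.4}) to force $\bar y=0$; the only cosmetic difference is that the paper proves the gradient identity as an \emph{inequality} valid for all functions with equality precisely for radial ones (Lemma~\ref{l1}), whereas you restrict to radial functions first and then have an exact equality throughout.
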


We also provide the explicit optimizers for the CKN inequalities in the
following regime:%
\begin{align}
p &  =2<2+\mu<N\text{, }2<r=2(q-1)<\frac{2N}{N-2}\tag{C2}\label{C4}\\
\mu+2 &  >s=\theta>\frac{N\mu}{N-2}\text{; }a=\frac{\left(  N-s\right)
\left[  q-2\right]  }{\left[  \left(  N-s\right)  2-\left(  N-\mu-2\right)
q\right]  (q-1)}\nonumber
\end{align}
Again, we denote%
\[
CKN\left(  N,\mu,s,q\right)  =\sup_{u\in D_{\mu,s}^{2,q}\left(
\mathbb{R}
^{N}\right)  }\frac{\left(  \int\limits_{%
\mathbb{R}
^{N}}\left\vert u\right\vert ^{2(q-1)}\frac{dx}{\left\vert x\right\vert ^{s}%
}\right)  ^{\frac{1}{2(q-1)}}}{\left(  \int\limits_{%
\mathbb{R}
^{N}}\left\vert \nabla u\right\vert ^{2}\frac{dx}{\left\vert x\right\vert
^{\mu}}\right)  ^{\frac{a}{2}}\left(  \int\limits_{%
\mathbb{R}
^{N}}\left\vert u\right\vert ^{q}\frac{dx}{\left\vert x\right\vert ^{s}%
}\right)  ^{\frac{1-a}{q}}}.
\]
Then, we will prove that

\begin{theorem}
\label{T11}There exists $s^{\ast}=s^{\ast}\left(  N,q,\mu\right)  \in\left(
0,N-\left(  q-1\right)  \left(  N-2-\mu\right)  \right)  $ such that for all
$\frac{N\mu}{N-2}<s<s^{\ast}$, $CKN\left(  N,\mu,s,q\right)  $ is attained by
the optimizers of the form
\[
V_{0}\left(  x\right)  =A\left(  1+B\left\vert x\right\vert ^{\mu+2-s}\right)
^{-\frac{1}{q-2}}\text{ for some }A\in%
\mathbb{R}
\text{, }B>0\text{.}%
\]

\end{theorem}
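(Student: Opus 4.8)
The plan is to reduce the inequality in the regime (C2) to a Gagliardo--Nirenberg inequality by a radial change of variables, and then to perturb off the endpoint $s_{0}:=\tfrac{N\mu}{N-2}$, at which Theorem \ref{T5} (with $p=2$) already identifies all maximizers. For the reduction, set $\tau=\tau(s):=\tfrac{\mu+2-s}{2}>0$ and $n=n(s):=\tfrac{2(N-s)}{\mu+2-s}$, so that $n>2$ under (C2), with $n=N$ exactly at $s=s_{0}$. For a radial $u(x)=f(|x|)$ the substitution $f(|x|)=v(|x|^{\tau})$ turns the three weighted integrals into unweighted radial integrals of ``dimension'' $n$:
\[
\int_{\mathbb{R}^{N}}|\nabla u|^{2}\frac{dx}{|x|^{\mu}}=\tau\,\omega_{N-1}\!\int_{0}^{\infty}|v'|^{2}\rho^{n-1}\,d\rho,\qquad \int_{\mathbb{R}^{N}}|u|^{m}\frac{dx}{|x|^{s}}=\frac{\omega_{N-1}}{\tau}\!\int_{0}^{\infty}|v|^{m}\rho^{n-1}\,d\rho\ \ (m=q,r).
\]
Hence, restricted to radial functions, $CKN(N,\mu,s,q)$ is an explicit multiple of the sharp Gagliardo--Nirenberg constant in dimension $n$ with exponents $(2,q,r)$. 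Since (C2) forces $r=2(q-1)$, i.e.\ exactly the Del Pino--Dolbeault class $r=p\tfrac{q-1}{p-1}$ at $p=2$, the results of \cite{DePDo1,DePDo} apply: the radial maximizers are $v(\rho)=A(1+B\rho^{2})^{-1/(q-2)}$, which after undoing the substitution $\rho=|x|^{\tau}$ are exactly $V_{0}(x)=A(1+B|x|^{\mu+2-s})^{-1/(q-2)}$. This is valid whenever $s_{0}<s<N-(q-1)(N-2-\mu)$, precisely the range in which $r<\tfrac{2n}{n-2}$, i.e.\ the reduced problem is genuinely subcritical.

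Next I would establish that $CKN(N,\mu,s,q)$ is \emph{attained} (by an a priori non-radial function) on this whole subcritical range. Since the weights are comparable to Lebesgue measure away from the origin, only mass escaping to $0$ or to $\infty$ is delicate, and a concentration--compactness argument in $D^{2,q}_{\mu,s}(\mathbb{R}^{N})$ — using the strict subadditivity of the levels, which holds because $r$ is strictly subcritical and $q$ lies strictly below the first critical exponent — rules out vanishing and dichotomy and produces a maximizer (alternatively one invokes \cite{ZhZ}). Applying the change of variables $\rho=|x|^{\tau}$ to the \emph{full} functional keeps the radial-derivative term as above but leaves the angular part of $|\nabla u|^{2}|x|^{-\mu}$ with an extra factor $\tau^{-2}$, so the full problem becomes a (possibly anisotropic, fractional-dimensional) Gagliardo--Nirenberg problem; at $s=s_{0}$ one has $n=N$ and the whole thing collapses to the ordinary Gagliardo--Nirenberg inequality, whose $r=2(q-1)$ maximizers are radial. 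This is Theorem \ref{T5} for $p=2$, and it shows that every maximizer at $s=s_{0}$ is a rescaling of the radial $V_{0}$.

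To finish I must rule out symmetry breaking for $s$ slightly above $s_{0}$. I would argue by contradiction: if a sequence $s_{k}\downarrow s_{0}$ admitted non-radial maximizers $u_{k}$, normalized by $\int|\nabla u_{k}|^{2}|x|^{-\mu}=\int|u_{k}|^{q}|x|^{-s_{k}}=1$, then — the exponents, coefficients and best constant all depending continuously on $s$, together with the compactness just established — the $u_{k}$ would converge strongly to a normalized maximizer at $s_{0}$, hence to a fixed rescaling of the radial $V_{0}$. A non-degeneracy statement then yields the contradiction: modulo its scaling invariance (and, when $\mu=0$, modulo translations, which a Lyapunov--Schmidt reduction pins at the origin once $s>0$), $V_{0}$ is a non-degenerate maximizer at $s_{0}$ whose second variation is strictly negative on the non-radial modes — here one uses the non-degeneracy of the Del Pino--Dolbeault optimizer and that the first nonzero eigenvalue $N-1$ of $-\Delta_{S^{N-1}}$, rescaled by $\tau(s_{0})^{-2}\ge 1$, stays off the kernel — so no non-radial maximizer can lie near $V_{0}$. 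Hence there is $s^{*}\in\bigl(s_{0},\,N-(q-1)(N-2-\mu)\bigr)$ such that for $s_{0}<s<s^{*}$ every maximizer of $CKN(N,\mu,s,q)$ is radial, hence equals $V_{0}$ by the first step.

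The main obstacle is this last step: proving the non-degeneracy and the strict sign of the second variation of $V_{0}$ in the non-radial directions at $s=s_{0}$, and converting the strong convergence of putative non-radial maximizers into a genuine contradiction — equivalently, showing that the symmetry-breaking threshold for the reduced anisotropic problem, a Felli--Schneider / Catrina--Wang type curve (compare \cite{CW}), lies strictly above $s_{0}$. A secondary difficulty is the existence/compactness used above, since the Schwarz rearrangement is unavailable in the presence of the singular weights.
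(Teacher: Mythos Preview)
Your strategy is sound but takes a much longer route than the paper's. The paper's proof is essentially a one--line reduction: apply the transform $D_{N,d,2}$ with $d=\tfrac{N-2}{N-2-\mu}$ (Lemma~\ref{l1}) to send $CKN(N,\mu,s,q)$ to the auxiliary problem $CKN_{1}$ with gradient weight $\mu=0$ and a new weight $\tilde s:=N+sd-Nd$ on the $|u|^{r}$ and $|u|^{q}$ terms. The base point $s_{0}=\tfrac{N\mu}{N-2}$ maps to $\tilde s=0$, and the range $s_{0}<s$ to $\tilde s>0$. But $CKN_{1}$ with $p=2$, $\mu=0$, $\tilde s=\theta>0$ small, and $r=2(q-1)$ is \emph{exactly} the setting of Dolbeault--Muratori--Nazaret~\cite{DMN}, who prove via a perturbation off $\tilde s=0$ (i.e.\ off the Del~Pino--Dolbeault Gagliardo--Nirenberg inequality) that the maximizers are $(1+|x|^{2-\tilde s})^{-1/(q-2)}$ up to scaling. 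The paper simply cites this and pulls back through $D_{N,d,2}^{-1}$, using that the transform preserves radiality and turns the gradient inequality of Lemma~\ref{l1}(2) into an equality on radial functions.

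Your proposal instead reproduces the \cite{DMN} machinery \emph{in situ}: the $s$--dependent change of variables $\rho=|x|^{\tau(s)}$, the concentration--compactness existence step, and the second--variation/non--degeneracy argument near $s_{0}$ are precisely the ingredients of \cite{DMN}, transported to $\mu>0$. The ``main obstacle'' you flag --- strict negativity of the Hessian on non-radial modes and the ensuing Lyapunov--Schmidt reduction --- is the whole content of \cite{DMN}; the paper's point is that Lemma~\ref{l1} makes redoing it unnecessary by first killing $\mu$. One small inaccuracy worth fixing: at $s=s_{0}$ your anisotropy parameter is $\tau(s_{0})=\tfrac{N-2-\mu}{N-2}=d^{-1}\neq 1$, so under your change of variables the full (non-radial) problem does \emph{not} collapse to the ordinary Gagliardo--Nirenberg inequality --- only the radial part does. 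You still need Theorem~\ref{T5} (equivalently Lemma~\ref{l4.4}, i.e.\ the paper's transform) to obtain radiality of all maximizers at the base point, so even your perturbative route leans on Lemma~\ref{l1} at the crucial step. In short: your plan would work if completed, but it duplicates \cite{DMN}; the paper's contribution here is precisely the observation that the $s$--independent transform $D_{N,d,2}$ reduces the general $\mu>0$ case to the $\mu=0$ case already handled in \cite{DMN}.
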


\section{Preliminaries and some important lemmas}

 To carry through our argument, it is necessary to show our new Kelvin type transform can indeed be used to reduce 
 the CKN inequalities with more complicated weights to simpler ones and vice versa. This interchange is nicely done through the following lemmas which are of independent interests and can be found useful in other settings as well.

\begin{lemma}
\label{l2.1}We have that $\left\vert x\cdot\nabla u\left(  x\right)
\right\vert =\left\vert x\right\vert \left\vert \nabla u\left(  x\right)
\right\vert $ for a.e. $x\in%
\mathbb{R}
^{N}$ if and only if $u$ is radially symmetric, that is $u\left(  x\right)
=u\left(  y\right)  $ when $\left\vert x\right\vert =\left\vert y\right\vert
.$

\begin{proof}
If $u$ is radial, then we have
\[
\frac{\partial u}{\partial x_{j}}(x)=u^{\prime}\left(  \left\vert x\right\vert
\right)  \frac{x_{j}}{\left\vert x\right\vert }.
\]
Hence,%
\[
\left\vert \nabla u\left(  x\right)  \right\vert =\left\vert u^{\prime}\left(
\left\vert x\right\vert \right)  \right\vert .
\]
Also,
\[
\left\vert \sum_{j=1}^{N}x_{j}\frac{\partial u}{\partial x_{j}}(x)\right\vert
=\left\vert u^{\prime}\left(  \left\vert x\right\vert \right)  \right\vert
\left\vert \sum_{j=1}^{N}x_{j}\frac{x_{j}}{\left\vert x\right\vert
}\right\vert =\left\vert u^{\prime}\left(  \left\vert x\right\vert \right)
\right\vert \left\vert x\right\vert =\left\vert x\right\vert \left\vert \nabla
u\left(  x\right)  \right\vert .
\]
Now, assume that for all $x:$%
\[
\left\vert x\cdot\nabla u\left(  x\right)  \right\vert =\left\vert
x\right\vert \left\vert \nabla u\left(  x\right)  \right\vert .
\]
It means that $\nabla u\left(  x\right)  $ has the same direction with $x$.
That is we can find a scalar function $g\left(  x\right)  $ such that%
\[
\nabla u\left(  x\right)  =g\left(  x\right)  x.
\]
Now, let $a$ and $b$ be two points on the sphere with radius $r>0$ (that is
$\left\vert a\right\vert =\left\vert b\right\vert =r$). We connect $x$ and $y$
by a piecewise smooth curve $r\left(  t\right)  $ on this sphere, i.e.
$\left\vert r\left(  t\right)  \right\vert =r$, $r\left(  0\right)  =a$ and
$r\left(  1\right)  =b$. Then we have%
\[
\nabla u\left(  r\left(  t\right)  \right)  =g\left(  r\left(  t\right)
\right)  r\left(  t\right)  .
\]
Noting that from%
\[
\left\vert r\left(  t\right)  \right\vert =r\text{ for all }t
\]
we can get that%
\[
\nabla r\left(  t\right)  \cdot r\left(  t\right)  =0.
\]
Thus
\[
\nabla u\left(  r\left(  t\right)  \right)  \cdot\nabla r\left(  t\right)
=g\left(  r\left(  t\right)  \right)  r\left(  t\right)  \cdot\nabla r\left(
t\right)  =0.
\]
So
\[
u\left(  b\right)  -u\left(  a\right)  =u\left(  r\left(  1\right)  \right)
-u\left(  r\left(  0\right)  \right)  =%
{\displaystyle\int\limits_{0}^{1}}
\nabla u\left(  r\left(  t\right)  \right)  \cdot\nabla r\left(  t\right)
dt=0.
\]

This completes the proof of the lemma.
\end{proof}
\end{lemma}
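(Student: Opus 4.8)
The plan is to treat the two implications separately, in both cases exploiting the equality case of the elementary inequality $|x\cdot\nabla u(x)|\le |x|\,|\nabla u(x)|$ (Cauchy--Schwarz), which is an identity exactly when $\nabla u(x)$ and the position vector $x$ are linearly dependent.

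First I would dispatch the easy direction: if $u$ is radial, write $u(x)=v(|x|)$. For smooth $u$ the chain rule gives $\partial_j u(x)=v'(|x|)\,x_j/|x|$, so $\nabla u(x)=v'(|x|)\,x/|x|$ is a scalar multiple of $x$; taking norms yields $|\nabla u(x)|=|v'(|x|)|$ and $|x\cdot\nabla u(x)|=|v'(|x|)|\,|x|$, so the identity holds away from the origin (a null set). For the $u\in D^{p,q}_{\mu,\theta}$ that appear in the CKN functional, one passes from the smooth case to the general case by the chain rule for weakly differentiable functions.

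For the converse, assume $|x\cdot\nabla u(x)|=|x|\,|\nabla u(x)|$ a.e. By the equality case of Cauchy--Schwarz, for a.e.\ $x$ the gradient $\nabla u(x)$ is parallel to $x$; setting $g(x)=\frac{x\cdot\nabla u(x)}{|x|^{2}}$ (and $g=0$ where $\nabla u=0$) we get $\nabla u(x)=g(x)\,x$ with $g$ measurable. The geometric heart of the argument is that the component of $\nabla u$ tangent to each sphere then vanishes: given $a,b$ with $|a|=|b|=r$, join them by a piecewise smooth curve $\gamma$ on that sphere; differentiating $|\gamma(t)|^{2}\equiv r^{2}$ gives $\gamma(t)\cdot\gamma'(t)=0$, hence $\nabla u(\gamma(t))\cdot\gamma'(t)=g(\gamma(t))\,\gamma(t)\cdot\gamma'(t)=0$ and therefore $u(b)-u(a)=\int_{0}^{1}\nabla u(\gamma(t))\cdot\gamma'(t)\,dt=0$. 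Thus $u$ is constant on spheres, i.e.\ radial.

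The step I expect to be the main obstacle is making the converse rigorous under the mere ``a.e.'' hypothesis, since the curve-integration identity is not literally valid for a weakly differentiable $u$ along an individual curve, which may lie in an exceptional null set. I would remedy this in one of two ways: either mollify $u$, observe that the defect $|x|^{2}|\nabla u_{\varepsilon}|^{2}-|x\cdot\nabla u_{\varepsilon}|^{2}$ tends to $0$ and pass to an a.e.-convergent subsequence; or work in polar coordinates $x=r\omega$, where the hypothesis forces the spherical part of $\nabla u$ to vanish for a.e.\ $(r,\omega)$, and then invoke the ACL/Fubini characterization of Sobolev functions to conclude that $\omega\mapsto u(r\omega)$ is a.e.\ constant for a.e.\ $r$. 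The forward direction and the underlying linear-algebra fact are routine; essentially all the work is in this measure-theoretic bookkeeping.
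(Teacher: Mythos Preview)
Your proof is essentially identical to the paper's: both directions use the same chain-rule computation for the forward implication and, for the converse, the same curve-on-a-sphere argument exploiting the equality case of Cauchy--Schwarz to write $\nabla u(x)=g(x)\,x$ and then integrate along a spherical curve $\gamma$ to obtain $u(b)-u(a)=\int_0^1 \nabla u(\gamma(t))\cdot\gamma'(t)\,dt=0$. The paper does not address the measure-theoretic issue you flag---it argues as if $u$ were smooth---so your remarks on mollification or the ACL characterization in polar coordinates actually go beyond what the paper provides.
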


Let $d>1$. We define the vector-valued function $L_{N,d}:%
\mathbb{R}
^{N}\rightarrow%
\mathbb{R}
^{N}$ by
\[
L_{N,d}(x)=|x|^{d-1}x.
\]
The Jacobian matrix of this function $L_{N,d}$ is
\[
\mathbf{J_{L_{N,d}}}=\left(
\begin{array}
[c]{cccc}%
|x|^{d-1}+\left(  d-1\right)  |x|^{d-3}x_{1}^{2} & \left(  d-1\right)
|x|^{d-3}x_{1}x_{2} & \ldots & \left(  d-1\right)  |x|^{d-3}x_{1}x_{N}\\
\left(  d-1\right)  |x|^{d-3}x_{2}x_{1} & |x|^{d-1}+\left(  d-1\right)
|x|^{d-3}x_{2}^{2} & \ldots & \left(  d-1\right)  |x|^{d-3}x_{2}x_{N}\\
\vdots & \vdots & \ddots & \vdots\\
\left(  d-1\right)  |x|^{d-3}x_{N}x_{1} & \left(  d-1\right)  |x|^{d-3}%
x_{N}x_{2} & \ldots & |x|^{d-1}+\left(  d-1\right)  |x|^{d-3}x_{N}^{2}%
\end{array}
\right)  .
\]
We will now show that
\begin{equation}
\det(J_{L_{N,d}})=d|x|^{N\left(  d-1\right)  }. \label{2.2}%
\end{equation}
Indeed, consider the matrix%
\[
\mathbf{A}=\left(
\begin{array}
[c]{cccc}%
\left(  d-1\right)  |x|^{d-3}x_{1}^{2} & \left(  d-1\right)  |x|^{d-3}%
x_{1}x_{2} & \ldots & \left(  d-1\right)  |x|^{d-3}x_{1}x_{N}\\
\left(  d-1\right)  |x|^{d-3}x_{2}x_{1} & \left(  d-1\right)  |x|^{d-3}%
x_{2}^{2} & \ldots & \left(  d-1\right)  |x|^{d-3}x_{2}x_{N}\\
\vdots & \vdots & \ddots & \vdots\\
\left(  d-1\right)  |x|^{d-3}x_{N}x_{1} & \left(  d-1\right)  |x|^{d-3}%
x_{N}x_{2} & \ldots & \left(  d-1\right)  |x|^{d-3}x_{N}^{2}%
\end{array}
\right)  .
\]
It is easy to check that
\[
rank\left(  A\right)  =1\text{ and }tr\left(  A\right)  =\left(  d-1\right)
|x|^{d-1}\text{.}%
\]
Hence, its characteristic polynomial is
\[
\det\left(  \lambda\mathbb{I}_{N}-A\right)  =\lambda^{N}-\left(  d-1\right)
|x|^{d-1}\lambda^{N-1}.
\]
Choose $\lambda=-|x|^{d-1}$, we get%
\[
\det(J_{L_{N,d}})=\left(  -1\right)  ^{N}\det\left(  -|x|^{d-1}\mathbb{I}%
_{N}-A\right)  =d|x|^{N\left(  d-1\right)  }.
\]
We now define mappings $D_{N,d,p}$ with $p>1$ by
\begin{equation}
D_{N,d,p}u(x):=\left(  \frac{1}{d}\right)  ^{\frac{p-1}{p}}u(L_{N,d}%
(x))=\left(  \frac{1}{d}\right)  ^{\frac{p-1}{p}}u(|x|^{d-1}x). \label{2.3}%
\end{equation}
\newline We also define $D_{N,d,p}^{-1}$
\[
D_{N,d,p}^{-1}u=v\text{ if }u=D_{N,d,p}v.
\]

\smallskip

Under the transform $D_{N,d,p}$, we have the following result that will play
an important part in our paper:

\begin{lemma}
\label{l1}(1) For continuous function $f$, we have
\[
\int\limits_{%
\mathbb{R}
^{N}}\frac{f\left(  \left(  \frac{1}{d}\right)  ^{\frac{p-1}{p}}u\left(
x\right)  \right)  }{\left\vert x\right\vert ^{t}}dx=d\int\limits_{%
\mathbb{R}
^{N}}\frac{f\left(  D_{N,d,p}u\left(  x\right)  \right)  }{\left\vert
x\right\vert ^{N+td-Nd}}dx.
\]
In particular, we obtain that $u\in L^{s}\left(  \frac{dx}{\left\vert
x\right\vert ^{t}}\right)  $ if and only if $D_{N,d,p}u\in L^{s}\left(
\frac{dx}{\left\vert x\right\vert ^{N+td-Nd}}\right)  $.

(2) If $\nabla u\in L^{p}\left(  \frac{dx}{\left\vert x\right\vert ^{\mu}%
}\right)  $, then $\nabla D_{N,d,p}u\in L^{p}\left(  \frac{dx}{|x|^{d\left(
p+\mu-N\right)  +N-p}}\right)  $. Moreover,
\[
\int\limits_{%
\mathbb{R}
^{N}}\frac{|\nabla D_{N,d,p}u(x)|^{p}}{|x|^{d\left(  p+\mu-N\right)  +N-p}%
}dx\leq\int\limits_{%
\mathbb{R}
^{N}}\frac{|\nabla u(x)|^{p}}{\left\vert x\right\vert ^{\mu}}dx.
\]
The equality occurs if and only if $u$ is radially symmetric.

\begin{proof}
(1)
\[
\int\limits_{%
\mathbb{R}
^{N}}\frac{f\left(  D_{N,d,p}u\left(  x\right)  \right)  }{\left\vert
x\right\vert ^{N+td-Nd}}dx=\int\limits_{%
\mathbb{R}
^{N}}\frac{f\left(  \left(  \frac{1}{d}\right)  ^{\frac{p-1}{p}}%
u(|x|^{d-1}x)\right)  }{\left\vert x\right\vert ^{N+td-Nd}}dx.
\]
Using change of variables $y_{i}=|x|^{d-1}x_{i}$, $i=1,2,...,N$, we have
\begin{equation}
dy=\det(J_{L_{N,d}})dx=d|x|^{N\left(  d-1\right)  }dx, \label{equ1.7}%
\end{equation}
and
\begin{equation}
dx=\frac{1}{d}\frac{dy}{|y|^{N\frac{d-1}{d}}}. \label{equ1.8}%
\end{equation}
Hence%
\[
\int\limits_{%
\mathbb{R}
^{N}}\frac{f\left(  D_{N,d,p}u\left(  x\right)  \right)  }{\left\vert
x\right\vert ^{N+td-Nd}}dx=\frac{1}{d}\int\limits_{\mathbb{R}^{N}}%
\frac{f\left(  \left(  \frac{1}{d}\right)  ^{\frac{p-1}{p}}u\left(  y\right)
\right)  }{\left\vert y\right\vert ^{N\frac{d-1}{d}}\left\vert y\right\vert
^{\frac{N+td-Nd}{d}}}dy=\frac{1}{d}\int\limits_{%
\mathbb{R}
^{N}}\frac{f\left(  \left(  \frac{1}{d}\right)  ^{\frac{p-1}{p}}u\left(
y\right)  \right)  }{\left\vert y\right\vert ^{t}}dy.
\]

(2) Now we begin to consider the gradient of $D_{N,d,p}u$. After calculations,
we have
\begin{align*}
\left(
\begin{array}
[c]{c}%
\frac{\partial D_{N,d,p}u}{\partial x_{1}}(x)\\
\frac{\partial D_{N,d,p}u}{\partial x_{2}}(x)\\
\vdots\\
\frac{\partial D_{N,d,p}u}{\partial x_{N}}(x)
\end{array}
\right)   &  =\nabla D_{N,d,p}u(x)=\left(  \frac{1}{d}\right)  ^{\frac{p-1}%
{p}}\nabla(u(|x|^{d-1}x))\\
&  =\left(  \frac{1}{d}\right)  ^{\frac{p-1}{p}}J_{L_{N,d}}^{T}\left(
\begin{array}
[c]{c}%
\frac{\partial u}{\partial x_{1}}(|x|^{d-1}x)\\
\frac{\partial u}{\partial x_{2}}(|x|^{d-1}x)\\
\vdots\\
\frac{\partial u}{\partial x_{N}}(|x|^{d-1}x)
\end{array}
\right)  .
\end{align*}
Hence we have
\[
\frac{\partial D_{N,d,p}u}{\partial x_{i}}(x)=\left(  \frac{1}{d}\right)
^{\frac{p-1}{p}}\left(  |x|^{d-1}\frac{\partial u}{\partial x_{i}}%
(|x|^{d-1}x)+A_{i}\right)  ,
\]
for $i=1,2,...N$, where
\[
A_{i}:=\sum_{j=1}^{N}\left(  d-1\right)  |x|^{d-3}x_{i}x_{j}\frac{\partial
u}{\partial x_{j}}(|x|^{d-1}x).
\]
Hence, we obtain
\begin{align}
|\nabla D_{N,d,p}u(x)|^{2}  &  =\sum_{i=1}^{N}\left(  \frac{\partial
D_{N,d,p}u}{\partial x_{i}}(x)\right)  ^{2}\nonumber\\
&  =d^{-2\frac{p-1}{p}}\sum_{i=1}^{N}\left(  |x|^{d-1}\frac{\partial
u}{\partial x_{i}}(|x|^{d-1}x)+A_{i}\right)  ^{2}\nonumber\\
&  =d^{-2\frac{p-1}{p}}\left[  \sum_{i=1}^{N}|x|^{2\left(  d-1\right)
}\left(  \frac{\partial u}{\partial x_{i}}(|x|^{d-1}x)\right)  ^{2}+\sum
_{i=1}^{N}2A_{i}|x|^{d-1}\frac{\partial u}{\partial x_{i}}(|x|^{d-1}%
x)+\sum_{i=1}^{N}A_{i}^{2}\right] \nonumber\\
&  :=d^{-2\frac{p-1}{p}}\left(  I_{1}+I_{2}+I_{3}\right)  .\nonumber
\end{align}
Direct computations show
\[
I_{1}=\sum_{i=1}^{N}|x|^{2\left(  d-1\right)  }\left(  \frac{\partial
u}{\partial x_{i}}(|x|^{d-1}x)\right)  ^{2}=|x|^{2\left(  d-1\right)
}\left\vert \nabla u(|x|^{\frac{t}{N-t}}x)\right\vert ^{2}.
\]
Applying the Cauchy-Schwarz inequality to estimate the second term, we get
\begin{align}
I_{2}  &  =\sum_{i=1}^{N}2A_{i}|x|^{d-1}\frac{\partial u}{\partial x_{i}%
}(|x|^{d-1}x)\nonumber\\
&  =\sum_{i=1}^{N}2|x|^{d-1}\frac{\partial u}{\partial x_{i}}(|x|^{d-1}%
x)\sum_{j=1}^{N}\left(  d-1\right)  |x|^{d-3}x_{i}x_{j}\frac{\partial
u}{\partial x_{j}}(|x|^{d-1}x)\\
&  =2\left(  d-1\right)  |x|^{2d-2}\sum_{i=1}^{N}\sum_{j=1}^{N}\frac
{x_{i}x_{j}}{|x|^{2}}\frac{\partial u}{\partial x_{j}}(|x|^{d-1}%
x)\frac{\partial u}{\partial x_{i}}(|x|^{d-1}x)\nonumber\\
&  =2\left(  d-1\right)  |x|^{2d-2}\left(  \sum_{i=1}^{N}\frac{x_{i}}%
{|x|}\frac{\partial u}{\partial x_{i}}(|x|^{d-1}x)\right)  ^{2}\nonumber\\
&  \leq2\left(  d-1\right)  |x|^{2d-2}\left[  \sum_{i=1}^{N}\left(
\frac{x_{i}}{|x|}\right)  ^{2}\right]  \left[  \sum_{i=1}^{N}\left(
\frac{\partial u}{\partial x_{i}}(|x|^{d-1}x)\right)  ^{2}\right] \nonumber\\
&  =2\left(  d-1\right)  |x|^{2d-2}\left\vert \nabla u(|x|^{d-1}x)\right\vert
^{2}.\nonumber
\end{align}
Similarly for the last term, we have
\begin{align*}
I_{3}=\sum_{i=1}^{N}A_{i}^{2}  &  =\sum_{i=1}^{N}\left(  \sum_{j=1}^{N}\left(
d-1\right)  |x|^{d-3}x_{i}x_{j}\frac{\partial u}{\partial x_{j}}%
(|x|^{d-1}x)\right)  ^{2}\\
&  \leq\left(  d-1\right)  ^{2}|x|^{2d-6}\sum_{i=1}^{N}\left[  \sum_{j=1}%
^{N}\left(  x_{i}x_{j}\right)  ^{2}\right]  \left[  \sum_{j=1}^{N}\left(
\frac{\partial u}{\partial x_{j}}(|x|^{\frac{t}{N-t}}x)\right)  ^{2}\right] \\
&  =\left(  d-1\right)  ^{2}|x|^{2d-6}\sum_{i=1}^{N}\left\vert x\right\vert
^{2}x_{i}^{2}\left\vert \nabla u(|x|^{d-1}x)\right\vert ^{2}\\
&  =\left(  d-1\right)  ^{2}|x|^{2d-2}\left\vert \nabla u(|x|^{d-1}%
x)\right\vert ^{2}.
\end{align*}
Combining them together, we have
\[
|\nabla D_{N,d,p}u(x)|^{2}\leq d^{-2\frac{p-1}{p}}d^{2}|x|^{2d-2}\left\vert
\nabla u(|x|^{d-1}x)\right\vert ^{2}.
\]
This leads to
\[
\left\vert \nabla D_{N,d,p}u(x)\right\vert \leq d^{\frac{1}{p}}|x|^{d-1}%
\left\vert \nabla u(|x|^{d-1}x)\right\vert .
\]
Using the change of variables again, we get%
\begin{align*}
\int\limits_{%
\mathbb{R}
^{N}}\frac{|\nabla u(y)|^{p}}{\left\vert y\right\vert ^{\mu}}dy  &
=\int\limits_{%
\mathbb{R}
^{N}}\frac{|\nabla u(|x|^{d-1}x)|^{p}}{\left\vert |x|^{d-1}x\right\vert ^{\mu
}}d|x|^{N\left(  d-1\right)  }dx\\
&  \geq\frac{1}{d}\int\limits_{%
\mathbb{R}
^{N}}\frac{|\nabla D_{N,d,p}u(x)|^{p}}{|x|^{p\left(  d-1\right)  }\left\vert
|x|^{d-1}x\right\vert ^{\mu}}d|x|^{N\left(  d-1\right)  }dx\\
&  =\int\limits_{%
\mathbb{R}
^{N}}\frac{|\nabla D_{N,d,p}u(x)|^{p}}{|x|^{d\left(  p+\mu-N\right)  +N-p}}dx.
\end{align*}
Finally, by Lemma \ref{l2.1}, it is easy to check that the equalities hold if
and only if $u$ is radial.
\end{proof}
\end{lemma}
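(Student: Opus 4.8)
The plan is to deduce both parts from a single change of variables, the radial stretching $y=L_{N,d}(x)=|x|^{d-1}x$, for which we already have the Jacobian identity $\det(J_{L_{N,d}})=d|x|^{N(d-1)}$; equivalently $|x|=|y|^{1/d}$ and, as recorded in \eqref{equ1.8}, $dx=\tfrac{1}{d}|y|^{-N(d-1)/d}\,dy$. Note that $L_{N,d}$ is a homeomorphism of $\mathbb{R}^{N}$ (in polar coordinates it acts as the dilation $r\mapsto r^{d}$ on the radial variable and fixes each sphere), and is a $C^{\infty}$ diffeomorphism of $\mathbb{R}^{N}\setminus\{0\}$ onto itself with nonvanishing Jacobian there, so the substitution is legitimate once we discard the measure-zero set $\{0\}$. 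For part (1) I would simply substitute $y=L_{N,d}(x)$ in $\int_{\mathbb{R}^{N}}|x|^{-(N+td-Nd)}f(D_{N,d,p}u(x))\,dx$, using $D_{N,d,p}u(x)=(1/d)^{(p-1)/p}u(L_{N,d}(x))$ and $|x|^{-(N+td-Nd)}=|y|^{-(N+td-Nd)/d}$; the weight coming from $dx$ combines with this into the exponent $\tfrac{N+td-Nd}{d}+\tfrac{N(d-1)}{d}=t$, so the integral equals $\tfrac{1}{d}\int_{\mathbb{R}^{N}}|y|^{-t}f((1/d)^{(p-1)/p}u(y))\,dy$, which is the claimed identity. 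Taking $f(\xi)=|\xi|^{s}$ and absorbing the multiplicative constant gives the $L^{s}$ equivalence.

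For part (2), the core is a pointwise gradient bound. By the chain rule $\nabla D_{N,d,p}u(x)=(1/d)^{(p-1)/p}J_{L_{N,d}}(x)^{T}(\nabla u)(L_{N,d}(x))$, and since $J_{L_{N,d}}=|x|^{d-1}\mathbb{I}_{N}+(d-1)|x|^{d-3}xx^{T}$ is symmetric, for any vector $w$ one computes $|J_{L_{N,d}}^{T}w|^{2}=|x|^{2(d-1)}|w|^{2}+(d^{2}-1)|x|^{2(d-2)}(x\cdot w)^{2}$. The Cauchy--Schwarz inequality $(x\cdot w)^{2}\le|x|^{2}|w|^{2}$ then yields $|J_{L_{N,d}}^{T}w|\le d|x|^{d-1}|w|$, with equality precisely when $w$ is parallel to $x$ (the degenerate case $w=0$ included). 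Applying this with $w=(\nabla u)(L_{N,d}(x))$ gives $|\nabla D_{N,d,p}u(x)|\le d^{1/p}|x|^{d-1}|(\nabla u)(L_{N,d}(x))|$. Raising this to the power $p$, dividing by $|x|^{d(p+\mu-N)+N-p}$, integrating, and changing variables $y=L_{N,d}(x)$ as above — here the exponent $d(p+\mu-N)+N-p-p(d-1)$ simplifies to $d\mu-N(d-1)$, which together with the weight $|y|^{-N(d-1)/d}$ from $dx$ leaves exactly $\mu$, while the overall constant $(d^{1/p})^{p}=d$ cancels against the $\tfrac{1}{d}$ from the Jacobian — produces $\int_{\mathbb{R}^{N}}|\nabla D_{N,d,p}u(x)|^{p}|x|^{-[d(p+\mu-N)+N-p]}\,dx\le\int_{\mathbb{R}^{N}}|\nabla u(y)|^{p}|y|^{-\mu}\,dy$, which gives both the asserted integrability of $\nabla D_{N,d,p}u$ and the displayed inequality.

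Finally, for the equality statement I would just track the single estimate that was used: equality holds iff the Cauchy--Schwarz bound is an equality at the point $L_{N,d}(x)$ for a.e.\ $x$, i.e.\ $(\nabla u)(L_{N,d}(x))$ is parallel to $x$ for a.e.\ $x$; since $L_{N,d}(x)$ is a positive multiple of $x$, transporting by the diffeomorphism $L_{N,d}$ this says $\nabla u(y)$ is parallel to $y$ for a.e.\ $y$, i.e.\ $|y\cdot\nabla u(y)|=|y|\,|\nabla u(y)|$ a.e., which by Lemma \ref{l2.1} is exactly the radial symmetry of $u$. The one genuine point requiring care — and really the only obstacle — is justifying the chain rule and the change of variables for a merely weakly differentiable $u$ with $\nabla u\in L^{p}(|x|^{-\mu}\,dx)$ rather than a classical $C^{1}$ function; I would handle this by proving all the identities first for $u\in C_{0}^{\infty}(\mathbb{R}^{N}\setminus\{0\})$, where every step is elementary, and then passing to the general case by density together with the norm identities just established, using that composition with the $C^{\infty}$ diffeomorphism $L_{N,d}$ preserves weak gradients away from the origin.
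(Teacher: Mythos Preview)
Your proof is correct and follows essentially the same approach as the paper: the same change of variables $y=L_{N,d}(x)$ for part~(1), and for part~(2) the same chain rule, the same Cauchy--Schwarz estimate on $|J_{L_{N,d}}^{T}w|$, and the same appeal to Lemma~\ref{l2.1} for the equality case. Your closed-form identity $|J_{L_{N,d}}^{T}w|^{2}=|x|^{2(d-1)}|w|^{2}+(d^{2}-1)|x|^{2(d-2)}(x\cdot w)^{2}$ is a bit more economical than the paper's expansion into three pieces $I_{1}+I_{2}+I_{3}$ bounded separately, but the underlying argument is identical.
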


\section{Caffarelli-Kohn-Nirenberg inequality when $0<a<1$ under the condition
(\ref{C1})}

Theorem \ref{T4}, Theorem \ref{T5} and Theorem \ref{T6} will be proved via the
following series of lemmas. For the convenience of the reader, we recall that
the conditions on the parameters are
\begin{align}
1  &  <p<p+\mu<N\text{, }\theta\leq\frac{N\mu}{N-p}\leq s<N,\tag{C1}\\
1  &  \leq q<r<\frac{Np}{N-p};\text{ }a=\frac{\left[  \left(  N-\theta\right)
r-\left(  N-s\right)  q\right]  p}{\left[  \left(  N-\theta\right)  p-\left(
N-\mu-p\right)  q\right]  r}.\nonumber
\end{align}
Also,
\[
CKN\left(  N,\mu,\theta,s,p,q,r\right)  =\sup_{u\in D_{\mu,\theta}%
^{p,q}\left(
\mathbb{R}
^{N}\right)  }\frac{\left(  \int\limits_{%
\mathbb{R}
^{N}}\left\vert u\right\vert ^{r}\frac{dx}{\left\vert x\right\vert ^{s}%
}\right)  ^{1/r}}{\left(  \int\limits_{%
\mathbb{R}
^{N}}\left\vert \nabla u\right\vert ^{p}\frac{dx}{\left\vert x\right\vert
^{\mu}}\right)  ^{\frac{a}{p}}\left(  \int\limits_{%
\mathbb{R}
^{N}}\left\vert u\right\vert ^{q}\frac{dx}{\left\vert x\right\vert ^{\theta}%
}\right)  ^{\frac{1-a}{q}}}.
\]
We now set
\[
GN\left(  N,p,q,r,\mu,\theta,s\right)  =\sup_{u\in D_{0,N+\theta d-Nd}%
^{p,q}\left(
\mathbb{R}
^{N}\right)  }\frac{\left(  \int\limits_{%
\mathbb{R}
^{N}}\frac{\left\vert u\right\vert ^{r}}{\left\vert x\right\vert ^{N+sd-Nd}%
}dx\right)  ^{1/r}}{\left(  \int\limits_{%
\mathbb{R}
^{N}}\left\vert \nabla u\right\vert ^{p}dx\right)  ^{\frac{a}{p}}\left(
\int\limits_{%
\mathbb{R}
^{N}}\frac{\left\vert u\right\vert ^{q}}{\left\vert x\right\vert ^{N+\theta
d-Nd}}dx\right)  ^{\frac{1-a}{q}}},
\]
where
\[
d=\frac{N-p}{N-p-\mu}.
\]
It is important to note here that since $\theta\leq\frac{N\mu}{N-p}\leq s<N$,
we have
\[
N+\theta d-Nd\leq0\leq N+sd-Nd<N.
\]

\begin{lemma}
\label{l4.1}The variational problem
\begin{align*}
&  A\left(  N,p,q,r,\mu,\theta,s\right) \\
&  =\inf\left\{  I\left(  u\right)  =\frac{1}{p}\int\limits_{%
\mathbb{R}
^{N}}\left\vert \nabla u\right\vert ^{p}dx+\frac{1}{q}\int\limits_{%
\mathbb{R}
^{N}}\frac{\left\vert u\right\vert ^{q}}{\left\vert x\right\vert ^{N+\theta
d-Nd}}dx:u\in D_{0,N+\theta d-Nd}^{p,q}\left(
\mathbb{R}
^{N}\right)  \text{ and}\int\limits_{%
\mathbb{R}
^{N}}\frac{\left\vert u\right\vert ^{r}}{\left\vert x\right\vert ^{N+sd-Nd}%
}dx=1\right\}
\end{align*}
has a minimizer. Moreover, $A\left(  N,p,q,r,\mu,\theta,s\right)  >0.$

\begin{proof}
By the classical Schwarz rearrangement, we can assume that there exists a
sequence of radial functions $\left(  u_{n}\right)  :$%
\begin{align*}
I\left(  u_{n}\right)   &  \downarrow A\left(  N,p,q,r,\mu,\theta,s\right) \\
\int\limits_{%
\mathbb{R}
^{N}}\frac{\left\vert u_{n}\right\vert ^{r}}{\left\vert x\right\vert
^{N+sd-Nd}}dx  &  =1.
\end{align*}
We can assume WLOG that $u_{n}\rightharpoonup u$ in $u\in D_{0,N+\theta
d-Nd}^{p,q}\left(
\mathbb{R}
^{N}\right)  $. Since it is clear that $I\left(  u\right)  \leq A\left(
N,p,q,r,\mu,\theta,s\right)  $, it is now enough to show
\[
\int\limits_{%
\mathbb{R}
^{N}}\frac{\left\vert u\right\vert ^{r}}{\left\vert x\right\vert ^{N+sd-Nd}%
}dx=1.
\]
But this is easy to observe since we can write for $R>0$ sufficiently large:
\[
\int\limits_{%
\mathbb{R}
^{N}}\frac{\left\vert u_{n}-u\right\vert ^{r}}{\left\vert x\right\vert
^{N+sd-Nd}}dx=%
{\displaystyle\int\limits_{B_{R}}}
+%
{\displaystyle\int\limits_{B_{R}^{c}}}
\frac{\left\vert u_{n}-u\right\vert ^{r}}{\left\vert x\right\vert ^{N+sd-Nd}%
}dx.
\]
Then by the Radial Lemma, we get%
\[%
{\displaystyle\int\limits_{B_{R}^{c}}}
\frac{\left\vert u_{n}-u\right\vert ^{r}}{\left\vert x\right\vert ^{N+sd-Nd}%
}dx\rightarrow0\text{.}%
\]
Also, by the compactness of Sobolev embeddings, we can deduce%
\[%
{\displaystyle\int\limits_{B_{R}}}
\frac{\left\vert u_{n}-u\right\vert ^{r}}{\left\vert x\right\vert ^{N+sd-Nd}%
}dx\rightarrow0\text{.}%
\]
As a consequence, $u\neq0$ and $A\left(  N,p,q,r,\mu,\theta,s\right)  >0.$
Moreover, noting that for $\lambda>0:$
\[
u_{\lambda}\left(  x\right)  =\lambda^{\frac{Nd-sd}{r}}u\left(  \lambda
x\right)  ,
\]
then%
\begin{align*}
\left\Vert \nabla u_{\lambda}\right\Vert _{p}  &  =\lambda^{\frac{Nd-sd}%
{r}+\frac{p-N}{p}}\left\Vert \nabla u\right\Vert _{p};\\
\left\Vert u_{\lambda}\right\Vert _{k}  &  =\lambda^{\frac{Nd-sd}{r}-\frac
{N}{k}}\left\Vert u\right\Vert _{k},
\end{align*}
and
\[
\int\limits_{%
\mathbb{R}
^{N}}\frac{\left\vert u_{\lambda}\right\vert ^{r}}{\left\vert x\right\vert
^{N+sd-Nd}}dx=1.
\]
Also,%
\begin{align*}
I\left(  u_{\lambda}\right)   &  =\frac{1}{p}\int\limits_{%
\mathbb{R}
^{N}}\left\vert \nabla u_{\lambda}\right\vert ^{p}dx+\frac{1}{q}\int\limits_{%
\mathbb{R}
^{N}}\frac{\left\vert u_{\lambda}\right\vert ^{q}}{\left\vert x\right\vert
^{N+\theta d-Nd}}dx\\
&  =\frac{1}{p}\lambda^{\frac{Nd-sd}{r}p+p-N}\left\Vert \nabla u\right\Vert
_{p}^{p}+\frac{1}{q}\lambda^{q\frac{Nd-sd}{r}-N+N+\theta d-Nd}\int\limits_{%
\mathbb{R}
^{N}}\frac{\left\vert u\right\vert ^{q}}{\left\vert x\right\vert ^{N+\theta
d-Nd}}dx\\
&  =\lambda^{m}A+\lambda^{-n}B
\end{align*}
with%
\begin{align*}
m  &  =\frac{Nd-sd}{r}p+p-N\text{; }n=Nd-\theta d-q\frac{Nd-sd}{r}\\
A  &  =\frac{1}{p}\left\Vert \nabla u\right\Vert _{p}^{p};\text{ }B=\frac
{1}{q}\int\limits_{%
\mathbb{R}
^{N}}\frac{\left\vert u\right\vert ^{q}}{\left\vert x\right\vert ^{N+\theta
d-Nd}}dx.
\end{align*}
Hence
\[
A\left(  N,p,q,r,\mu,\theta,s\right)  =\inf_{\lambda>0}I\left(  u_{\lambda
}\right)  =I\left(  u_{\lambda_{0}}\right)
\]
where
\[
\lambda_{0}=\left(  \frac{nB}{mA}\right)  ^{\frac{1}{m+n}}.
\]
It means that
\[
A\left(  N,p,q,r,\mu,\theta,s\right)  =\frac{m+n}{m}\left(  \frac{n}%
{m}\right)  ^{-\frac{n}{m+n}}A^{\frac{n}{m+n}}B^{\frac{m}{m+n}}.
\]

\end{proof}
\end{lemma}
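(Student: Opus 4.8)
The plan is to obtain the minimizer by the direct method, after first reducing the competition to radial functions. Throughout write $t_{\theta}:=N+\theta d-Nd\le 0$ and $t_{s}:=N+sd-Nd\in[0,N)$. Given any admissible $u$, its symmetric decreasing rearrangement $u^{*}$ satisfies $\|\nabla u^{*}\|_{p}\le\|\nabla u\|_{p}$ (P\'olya--Szeg\H{o}); moreover, by the layer--cake formula, since $|x|^{-t_{\theta}}$ is radially nondecreasing (as $t_{\theta}\le 0$) one gets $\int_{\mathbb{R}^{N}}|u^{*}|^{q}|x|^{-t_{\theta}}\,dx\le\int_{\mathbb{R}^{N}}|u|^{q}|x|^{-t_{\theta}}\,dx$, while since $|x|^{-t_{s}}$ is radially nonincreasing (as $t_{s}\ge 0$) one gets $\int_{\mathbb{R}^{N}}|u^{*}|^{r}|x|^{-t_{s}}\,dx\ge\int_{\mathbb{R}^{N}}|u|^{r}|x|^{-t_{s}}\,dx=1$. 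Rescaling $u^{*}$ by a factor $\le 1$ restores the constraint and only decreases both terms of $I$, so $A(N,p,q,r,\mu,\theta,s)$ equals the infimum of $I$ over \emph{radial} admissible functions.

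Next, take a radial minimizing sequence $(u_{n})$ with $I(u_{n})\downarrow A$ and $\int_{\mathbb{R}^{N}}|u_{n}|^{r}|x|^{-t_{s}}\,dx=1$. Since $I(u_{n})$ is bounded, both $\|\nabla u_{n}\|_{p}$ and $\int_{\mathbb{R}^{N}}|u_{n}|^{q}|x|^{-t_{\theta}}\,dx$ are bounded, so $(u_{n})$ is bounded in $D^{p,q}_{0,t_{\theta}}(\mathbb{R}^{N})$; passing to a subsequence, $u_{n}\rightharpoonup u$, with $u$ radial. Weak lower semicontinuity of $\|\nabla\cdot\|_{p}^{p}$ and of the weighted $L^{q}$ term gives $I(u)\le\liminf I(u_{n})=A$, so it remains only to show that $u$ is admissible, i.e.\ $\int_{\mathbb{R}^{N}}|u|^{r}|x|^{-t_{s}}\,dx=1$; granting this, $I(u)\ge A$, hence $I(u)=A$ and $u$ is the desired minimizer.

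The crux is therefore the compactness of the constraint functional along radial minimizing sequences, and this is where (C1) is used. I would split $\int_{\mathbb{R}^{N}}|u_{n}-u|^{r}|x|^{-t_{s}}\,dx=\int_{B_{R}}+\int_{B_{R}^{c}}$. On $B_{R}^{c}$ one uses the radial (Strauss--type) estimate $|v(x)|\le C\|\nabla v\|_{p}\,|x|^{-(N-p)/p}$, valid because $p<N$, applied to $v=u_{n}-u$, \emph{together with} the bounded weighted $L^{q}$ norm of $u_{n}-u$ (as $t_{\theta}\le 0$, the weight $|x|^{-t_{\theta}}$ penalizes mass near infinity): writing $|v|^{r}|x|^{-t_{s}}=\bigl(|v|^{q}|x|^{-t_{\theta}}\bigr)\,|v|^{r-q}|x|^{-(s-\theta)d}$ and using $r>q$, $s\ge\theta$, $p<N$, the exterior integral is bounded by a negative power of $R$, uniformly in $n$. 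On $B_{R}$ one uses that, since $\nabla u_{n}\in L^{p}(\mathbb{R}^{N})$ with $p<N$, the $u_{n}$ are bounded in $L^{Np/(N-p)}(B_{R})$ and hence in $W^{1,p}(B_{R})$, so Rellich--Kondrachov yields strong $L^{r'}(B_{R})$ convergence for $r'$ slightly above $r$; a H\"older estimate against $|x|^{-t_{s}}$ (licit since $t_{s}<N$) then upgrades this to $\int_{B_{R}}|u_{n}-u|^{r}|x|^{-t_{s}}\,dx\to 0$. The subcriticality that makes the bounded--region step work, namely $r(N-p-\mu)<p(N-s)$ (equivalently $t_{s}+r(N-p)/p<N$), is precisely the requirement $a<1$ built into (C1). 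Combining the two estimates gives $\int_{\mathbb{R}^{N}}|u|^{r}|x|^{-t_{s}}\,dx=1$, so $u\not\equiv 0$ and $u$ is a minimizer. I expect the exterior estimate --- reconciling the radial decay rate with the two weights --- to be the most delicate step, since symmetrization has removed the non-compactness coming from translations but not, a priori, that coming from spreading to spatial infinity.

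Finally, $A>0$: since $u\not\equiv 0$ and a nonzero constant has infinite $\int_{\mathbb{R}^{N}}|u|^{q}|x|^{-t_{\theta}}\,dx$ (the weight $|x|^{-t_{\theta}}$ is not integrable at infinity), one must have $\|\nabla u\|_{p}>0$, so $A=I(u)\ge\frac1p\|\nabla u\|_{p}^{p}>0$. As a by-product I would record the dilation structure: the map $u_{\lambda}(x):=\lambda^{(Nd-sd)/r}u(\lambda x)$ leaves the constraint unchanged and turns $I$ into the elementary function $\lambda\mapsto\lambda^{m}A_{0}+\lambda^{-n}B_{0}$, with $A_{0}=\frac1p\|\nabla u\|_{p}^{p}$, $B_{0}=\frac1q\int_{\mathbb{R}^{N}}|u|^{q}|x|^{-t_{\theta}}\,dx$, and exponents $m,n$ which (C1) makes positive; minimizing over $\lambda>0$ at $\lambda_{0}=(nB_{0}/mA_{0})^{1/(m+n)}$ both produces the closed form for $A$ in terms of $A_{0},B_{0}$ and reconfirms that minimizing sequences cannot degenerate by dilation.
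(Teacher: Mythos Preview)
Your proposal is correct and follows essentially the same approach as the paper's proof: reduce to radial functions via Schwarz rearrangement, pass to a weak limit, recover the constraint by splitting into $B_R$ and $B_R^c$ and invoking a Strauss--type radial decay estimate on the exterior together with compact Sobolev embedding on the interior, and finally record the dilation structure $u_\lambda(x)=\lambda^{(Nd-sd)/r}u(\lambda x)$ to obtain the closed form for $A$. You supply more detail than the paper does (explicitly justifying the rearrangement inequalities via the signs of $t_\theta\le 0$ and $t_s\ge 0$, and pinpointing that the interior compactness step uses the subcriticality $a<1$), but the skeleton is identical.
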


\begin{lemma}
\label{l4.2}$GN\left(  N,p,q,r\right)  $ can be achieved and
\[
GN\left(  N,p,q,r,\mu,\theta,s\right)  =\left[  \frac{\frac{m+n}{m}\left(
\frac{n}{m}\right)  ^{-\frac{n}{m+n}}\left(  \frac{1}{p}\right)  ^{\frac
{n}{m+n}}\left(  \frac{1}{q}\right)  ^{\frac{m}{m+n}}}{A\left(  N,p,q,r,\mu
,\theta,s\right)  }\right]  ^{\frac{\frac{a}{p}}{\frac{n}{m+n}}}.
\]

\begin{proof}
For any $v$ with
\[
\int\limits_{%
\mathbb{R}
^{N}}\frac{\left\vert v\right\vert ^{r}}{\left\vert x\right\vert ^{N+sd-Nd}%
}dx=1,
\]
we use the above process and get
\begin{align*}
A\left(  N,p,q,r,\mu,\theta,s\right)   &  \leq\inf_{\lambda>0}I\left(
v_{\lambda}\right) \\
&  =\frac{m+n}{m}\left(  \frac{n}{m}\right)  ^{-\frac{n}{m+n}}\left(  \frac
{1}{p}\left\Vert \nabla v\right\Vert _{p}^{p}\right)  ^{\frac{n}{m+n}}\left(
\frac{1}{q}\int\limits_{%
\mathbb{R}
^{N}}\frac{\left\vert u\right\vert ^{q}}{\left\vert x\right\vert ^{N+\theta
d-Nd}}dx\right)  ^{\frac{m}{m+n}}.
\end{align*}
Noting that
\[
\frac{\frac{n}{m+n}}{\frac{m}{m+n}}=\frac{\frac{a}{p}}{\frac{1-a}{q}},
\]
we obtain%
\[
\frac{\left(  \int\limits_{%
\mathbb{R}
^{N}}\frac{\left\vert v\right\vert ^{r}}{\left\vert x\right\vert ^{N+sd-Nd}%
}dx\right)  ^{1/r}}{\left(  \int\limits_{%
\mathbb{R}
^{N}}\left\vert \nabla v\right\vert ^{p}dx\right)  ^{\frac{a}{p}}\left(
\int\limits_{%
\mathbb{R}
^{N}}\frac{\left\vert v\right\vert ^{q}}{\left\vert x\right\vert ^{N+\theta
d-Nd}}dx\right)  ^{\frac{1-a}{q}}}\leq\left[  \frac{\frac{m+n}{m}\left(
\frac{n}{m}\right)  ^{-\frac{n}{m+n}}\left(  \frac{1}{p}\right)  ^{\frac
{n}{m+n}}\left(  \frac{1}{q}\right)  ^{\frac{m}{m+n}}}{A\left(  N,p,q,r,\mu
,\theta,s\right)  }\right]  ^{\frac{\frac{a}{p}}{\frac{n}{m+n}}}.
\]
Combining with the previous lemma, we conclude that $GN\left(  N,p,q,r,\mu
,\theta,s\right)  $ can be achieved and%
\[
GN\left(  N,p,q,r,\mu,\theta,s\right)  =\left[  \frac{\frac{m+n}{m}\left(
\frac{n}{m}\right)  ^{-\frac{n}{m+n}}\left(  \frac{1}{p}\right)  ^{\frac
{n}{m+n}}\left(  \frac{1}{q}\right)  ^{\frac{m}{m+n}}}{A\left(  N,p,q,r,\mu
,\theta,s\right)  }\right]  ^{\frac{\frac{a}{p}}{\frac{n}{m+n}}}.
\]

\end{proof}
\end{lemma}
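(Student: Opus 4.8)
The plan is to deduce Lemma~\ref{l4.2} directly from Lemma~\ref{l4.1} by the standard device (in the spirit of Del Pino--Dolbeault) that identifies the supremum of the scale-invariant $GN$-quotient with a dilation-optimized form of the constrained minimum $A\left(N,p,q,r,\mu,\theta,s\right)$. Since the $GN$-quotient is invariant under $v\mapsto cv$ ($c>0$) and under the dilation $v\mapsto v(\lambda\,\cdot)$ suitably renormalized, it suffices to test it on functions $v\in D_{0,N+\theta d-Nd}^{p,q}\left(\mathbb{R}^{N}\right)$ normalized by $\int_{\mathbb{R}^{N}}|v|^{r}|x|^{-(N+sd-Nd)}\,dx=1$; for such $v$ the numerator is $1$ and the quotient equals $\|\nabla v\|_{p}^{-a}\big(\int_{\mathbb{R}^{N}}|v|^{q}|x|^{-(N+\theta d-Nd)}\,dx\big)^{-(1-a)/q}$.

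For the upper bound, I would apply the dilation $v_{\lambda}(x)=\lambda^{(Nd-sd)/r}v(\lambda x)$ exactly as in the proof of Lemma~\ref{l4.1}: it preserves the normalization, so $A\le\inf_{\lambda>0}I(v_{\lambda})$, and minimizing the resulting one-variable expression $\lambda^{m}A+\lambda^{-n}B$ gives
\[
A\le\frac{m+n}{m}\Big(\frac{n}{m}\Big)^{-\frac{n}{m+n}}\Big(\tfrac{1}{p}\|\nabla v\|_{p}^{p}\Big)^{\frac{n}{m+n}}\Big(\tfrac{1}{q}\int_{\mathbb{R}^{N}}\frac{|v|^{q}}{|x|^{N+\theta d-Nd}}\,dx\Big)^{\frac{m}{m+n}},
\]
with $m,n$ as in Lemma~\ref{l4.1}. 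Using the identity $\dfrac{n/(m+n)}{m/(m+n)}=\dfrac{a/p}{(1-a)/q}$, which is a direct computation from the definitions of $a$, $d$, $m$ and $n$, the two-factor product on the right collapses into a single power of the reciprocal of the $GN$-quotient of $v$ (up to the explicit constants $p,q$); solving for the quotient yields precisely the claimed value as an upper bound for $GN\left(N,p,q,r,\mu,\theta,s\right)$.

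For attainment and the matching lower bound, I would take the minimizer $u$ furnished by Lemma~\ref{l4.1}: it is nontrivial, radial, satisfies the normalization, and has $I(u)=A$. Because every dilation $u_{\lambda}$ also satisfies the normalization, $I(u)=A$ forces $\lambda=1$ to realize $\inf_{\lambda>0}I(u_{\lambda})$; hence the chain of inequalities above is an equality for $v=u$, so $u$ maximizes the $GN$-quotient and $GN\left(N,p,q,r,\mu,\theta,s\right)$ equals the stated expression. The only point demanding care is the exponent bookkeeping in the middle step --- verifying the proportionality of $\big(n/(m+n),m/(m+n)\big)$ with $\big(a/p,(1-a)/q\big)$ and checking it reduces the product to a single power of the quotient; apart from that, the argument is a faithful transcription of the scaling analysis already carried out in Lemma~\ref{l4.1}, so no essentially new obstacle arises.
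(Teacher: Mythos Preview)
Your proposal is correct and follows essentially the same approach as the paper: apply the dilation analysis from Lemma~\ref{l4.1} to any normalized $v$ to obtain the upper bound for $GN$, invoke the exponent identity $\frac{n/(m+n)}{m/(m+n)}=\frac{a/p}{(1-a)/q}$ to rewrite the bound in terms of the $GN$-quotient, and then use the minimizer of $A$ from Lemma~\ref{l4.1} to show the bound is attained. Your write-up is slightly more explicit about why the minimizer yields equality, but the substance is identical.
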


Using Lemma \ref{l4.1} and Lemma \ref{l4.2}, we will now show that $CKN\left(
N,\mu,\theta,s,p,q,r\right)  $ can be achieved:

\begin{lemma}
\label{l4.3}Under (C1), $CKN\left(  N,\mu,\theta,s,p,q,r\right)  $ can be
attained and
\[
CKN\left(  N,\mu,\theta,s,p,q,r\right)  =\left(  \frac{N-p}{N-p-\mu}\right)
^{\frac{1}{r}+\frac{p-1}{p}-\frac{1-a}{q}-\frac{p-1}{p}\left(  1-a\right)
}GN\left(  N,p,q,r,\mu,\theta,s\right)  .
\]

\begin{proof}
We begin by an observation that if $u\geq0$ is a maximizer for $GN\left(
N,p,q,r,\mu,\theta,s\right)  $, then we can assume that $u$ is radial. Indeed,
that fact is just a consequence of the Schwarz rearrangement. See for instance
\cite{LiLo}. Now, let us assume that $U_{0}\geq0$ is a radial maximizer of
$GN\left(  N,p,q,r,\mu,\theta,s\right)  $. We set $V_{0}=D_{N,d,p}^{-1}U_{0}$
with $d=\frac{N-p}{N-p-\mu}$. It means that $U_{0}=D_{N,d,p}V_{0}$. We will
show that $V_{0}$ is a maximizer of $CKN\left(  N,\mu,\theta,s,p,q,r\right)
$. Indeed, for any $v$, we need to show
\[
\frac{\left(  \int\limits_{%
\mathbb{R}
^{N}}\left\vert v\right\vert ^{r}\frac{dx}{\left\vert x\right\vert ^{s}%
}\right)  ^{1/r}}{\left(  \int\limits_{%
\mathbb{R}
^{N}}\left\vert \nabla v\right\vert ^{p}\frac{dx}{\left\vert x\right\vert
^{\mu}}\right)  ^{\frac{a}{p}}\left(  \int\limits_{%
\mathbb{R}
^{N}}\left\vert v\right\vert ^{q}\frac{dx}{\left\vert x\right\vert ^{\theta}%
}\right)  ^{\frac{1-a}{q}}}\leq\frac{\left(  \int\limits_{%
\mathbb{R}
^{N}}\left\vert V_{0}\right\vert ^{r}\frac{dx}{\left\vert x\right\vert ^{s}%
}\right)  ^{1/r}}{\left(  \int\limits_{%
\mathbb{R}
^{N}}\left\vert \nabla V_{0}\right\vert ^{p}\frac{dx}{\left\vert x\right\vert
^{\mu}}\right)  ^{\frac{a}{p}}\left(  \int\limits_{%
\mathbb{R}
^{N}}\left\vert V_{0}\right\vert ^{q}\frac{dx}{\left\vert x\right\vert
^{\theta}}\right)  ^{\frac{1-a}{q}}}.
\]
By Lemma \ref{l1}, we get by noting that when $d=\frac{N-p}{N-p-\mu}$:
$d\left(  p+\mu-N\right)  +N-p=0:$
\[
\int\limits_{%
\mathbb{R}
^{N}}\left\vert v\right\vert ^{r}\frac{dx}{\left\vert x\right\vert ^{s}%
}=d^{1+\frac{p-1}{p}r}\int\limits_{%
\mathbb{R}
^{N}}\frac{\left\vert D_{N,d,p}v\right\vert ^{r}}{\left\vert x\right\vert
^{N+sd-Nd}}dx
\]%
\[
\int\limits_{%
\mathbb{R}
^{N}}\left\vert v\right\vert ^{q}\frac{dx}{\left\vert x\right\vert ^{\theta}%
}=d^{1+\frac{p-1}{p}q}\int\limits_{%
\mathbb{R}
^{N}}\frac{\left\vert D_{N,d,p}v\right\vert ^{q}}{\left\vert x\right\vert
^{N+\theta d-Nd}}dx
\]%
\[
\int\limits_{%
\mathbb{R}
^{N}}\frac{|\nabla v|^{p}}{\left\vert x\right\vert ^{\mu}}dx\geq\int\limits_{%
\mathbb{R}
^{N}}\left\vert \nabla D_{N,d,p}v\right\vert ^{p}dx.
\]
and
\[
\int\limits_{%
\mathbb{R}
^{N}}\left\vert V_{0}\right\vert ^{r}\frac{dx}{\left\vert x\right\vert ^{s}%
}=d^{1+\frac{p-1}{p}r}\int\limits_{%
\mathbb{R}
^{N}}\frac{\left\vert U_{0}\right\vert ^{r}}{\left\vert x\right\vert
^{N+sd-Nd}}dx
\]%
\[
\int\limits_{%
\mathbb{R}
^{N}}\left\vert V_{0}\right\vert ^{q}\frac{dx}{\left\vert x\right\vert
^{\theta}}=d^{1+\frac{p-1}{p}q}\int\limits_{%
\mathbb{R}
^{N}}\frac{\left\vert U_{0}\right\vert ^{q}}{\left\vert x\right\vert
^{N+\theta d-Nd}}dx
\]%
\[
\int\limits_{%
\mathbb{R}
^{N}}\frac{|\nabla V_{0}|^{p}}{\left\vert x\right\vert ^{\mu}}dx=\int\limits_{%
\mathbb{R}
^{N}}\left\vert \nabla U_{0}\right\vert ^{p}dx.
\]
Hence%
\begin{align*}
&  \frac{\left(  \int\limits_{%
\mathbb{R}
^{N}}\left\vert v\right\vert ^{r}\frac{dx}{\left\vert x\right\vert ^{s}%
}\right)  ^{1/r}}{\left(  \int\limits_{%
\mathbb{R}
^{N}}\left\vert \nabla v\right\vert ^{p}\frac{dx}{\left\vert x\right\vert
^{\mu}}\right)  ^{\frac{a}{p}}\left(  \int\limits_{%
\mathbb{R}
^{N}}\left\vert v\right\vert ^{q}\frac{dx}{\left\vert x\right\vert ^{\theta}%
}\right)  ^{\frac{1-a}{q}}}\\
&  \leq\frac{\left(  d^{1+\frac{p-1}{p}r}\int\limits_{%
\mathbb{R}
^{N}}\frac{\left\vert D_{N,d,p}v\right\vert ^{r}}{\left\vert x\right\vert
^{N+sd-Nd}}dx\right)  ^{1/r}}{\left(  \int\limits_{%
\mathbb{R}
^{N}}\left\vert \nabla D_{N,d,p}v\right\vert ^{p}dx\right)  ^{\frac{a}{p}%
}\left(  d^{1+\frac{p-1}{p}q}\int\limits_{%
\mathbb{R}
^{N}}\frac{\left\vert D_{N,d,p}v\right\vert ^{q}}{\left\vert x\right\vert
^{N+\theta d-Nd}}dx\right)  ^{\frac{1-a}{q}}}\\
&  \leq d^{\frac{1}{r}+\frac{p-1}{p}-\frac{1-a}{q}-\frac{p-1}{p}\left(
1-a\right)  }\frac{\left(  \int\limits_{%
\mathbb{R}
^{N}}\frac{\left\vert U_{0}\right\vert ^{r}}{\left\vert x\right\vert
^{N+sd-Nd}}dx\right)  ^{1/r}}{\left(  \int\limits_{%
\mathbb{R}
^{N}}\left\vert \nabla U_{0}\right\vert ^{p}dx\right)  ^{\frac{a}{p}}\left(
\int\limits_{%
\mathbb{R}
^{N}}\frac{\left\vert U_{0}\right\vert ^{q}}{\left\vert x\right\vert
^{N+\theta d-Nd}}dx\right)  ^{\frac{1-a}{q}}}\\
&  =\frac{\left(
{\displaystyle\int}
\left\vert V_{0}\right\vert ^{r}\frac{dx}{\left\vert x\right\vert ^{s}%
}\right)  ^{1/r}}{\left(
{\displaystyle\int}
\left\vert \nabla V_{0}\right\vert ^{p}\frac{dx}{\left\vert x\right\vert
^{\mu}}\right)  ^{\frac{a}{p}}\left(
{\displaystyle\int}
\left\vert V_{0}\right\vert ^{q}\frac{dx}{\left\vert x\right\vert ^{\theta}%
}\right)  ^{\frac{1-a}{q}}}.
\end{align*}
We note that we have the equality in the last row because $U_{0}$ and $V_{0}$
are radial. Hence $CKN\left(  N,\mu,\theta,s,p,q,r\right)  $ is attained.
Moreover, it is easy to see that%
\[
CKN\left(  N,\mu,\theta,s,p,q,r\right)  =d^{\frac{1}{r}+\frac{p-1}{p}%
-\frac{1-a}{q}-\frac{p-1}{p}\left(  1-a\right)  }GN\left(  N,p,q,r,\mu
,\theta,s\right)
\]

\end{proof}
\end{lemma}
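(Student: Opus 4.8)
The plan is to transplant the extremal problem behind $CKN\left(N,\mu,\theta,s,p,q,r\right)$ onto the one behind $GN\left(N,p,q,r,\mu,\theta,s\right)$ via the Kelvin-type map $D_{N,d,p}$ with the critical dilation exponent $d=\frac{N-p}{N-p-\mu}$. Under (C1) one has $N-p-\mu>0$, hence $d>1$, so everything in Section~2 applies; and this $d$ is the unique choice making $d\left(p+\mu-N\right)+N-p=0$, which is exactly the identity that collapses the $|x|^{-\mu}$-weighted Dirichlet integral of Lemma~\ref{l1}(2) to an unweighted one. The whole argument is then a change-of-unknown built on Lemma~\ref{l1}.

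First I would invoke Lemma~\ref{l4.2} to select a maximizer $U_{0}\geq0$ of $GN\left(N,p,q,r,\mu,\theta,s\right)$, and by Schwarz rearrangement assume $U_{0}$ radially symmetric; set $V_{0}:=D_{N,d,p}^{-1}U_{0}$, so that $U_{0}=D_{N,d,p}V_{0}$, $V_{0}$ is radial, and $V_{0}\in D_{\mu,\theta}^{p,q}\left(\mathbb{R}^{N}\right)$ (which Lemma~\ref{l1} checks at once). The claim is that $V_{0}$ attains the supremum. Given any $v\in D_{\mu,\theta}^{p,q}\left(\mathbb{R}^{N}\right)$, Lemma~\ref{l1}(1) applied with $t=s$ and with $t=\theta$ produces the two exact scaling identities
\[
\int_{\mathbb{R}^{N}}|v|^{r}\frac{dx}{|x|^{s}}=d^{1+\frac{p-1}{p}r}\int_{\mathbb{R}^{N}}\frac{|D_{N,d,p}v|^{r}}{|x|^{N+sd-Nd}}\,dx,\qquad\int_{\mathbb{R}^{N}}|v|^{q}\frac{dx}{|x|^{\theta}}=d^{1+\frac{p-1}{p}q}\int_{\mathbb{R}^{N}}\frac{|D_{N,d,p}v|^{q}}{|x|^{N+\theta d-Nd}}\,dx,
\]
while Lemma~\ref{l1}(2), together with $d\left(p+\mu-N\right)+N-p=0$, gives the one-sided bound $\int_{\mathbb{R}^{N}}|\nabla v|^{p}|x|^{-\mu}\,dx\geq\int_{\mathbb{R}^{N}}|\nabla D_{N,d,p}v|^{p}\,dx$. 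Substituting these three relations into the $CKN$ quotient for $v$: the powers of $d$ collect into one factor $d^{E}$ with $E=\frac{1}{r}+\frac{p-1}{p}-\frac{1-a}{q}-\frac{p-1}{p}\left(1-a\right)$ (since $\tfrac1r\left(1+\tfrac{p-1}{p}r\right)-\tfrac{1-a}{q}\left(1+\tfrac{p-1}{p}q\right)=E$), the gradient inequality sits in the denominator and can therefore only enlarge the quotient, and what remains is $d^{E}$ times the $GN$ quotient evaluated at $D_{N,d,p}v$. As $D_{N,d,p}v\in D_{0,N+\theta d-Nd}^{p,q}\left(\mathbb{R}^{N}\right)$, the latter is $\leq d^{E}\,GN\left(N,p,q,r,\mu,\theta,s\right)$; hence the $CKN$ quotient at $v$ is $\leq d^{E}\,GN\left(N,p,q,r,\mu,\theta,s\right)$.

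It remains to verify that this bound is saturated at $V_{0}$, and this is the one genuinely delicate point. Rerun the computation with $v=V_{0}$: the only inequality used is the gradient estimate of Lemma~\ref{l1}(2); but $V_{0}$ is radial, so the equality case of that lemma — which in turn rests on Lemma~\ref{l2.1} (equality in $|x\cdot\nabla u|\leq|x||\nabla u|$ forces radial symmetry) — upgrades it to $\int_{\mathbb{R}^{N}}|\nabla V_{0}|^{p}|x|^{-\mu}\,dx=\int_{\mathbb{R}^{N}}|\nabla U_{0}|^{p}\,dx$. Therefore the $CKN$ quotient at $V_{0}$ equals $d^{E}$ times the $GN$ quotient at $U_{0}=D_{N,d,p}V_{0}$, which is $d^{E}\,GN\left(N,p,q,r,\mu,\theta,s\right)$ since $U_{0}$ is a maximizer. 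Combining the two steps: for every $v$, the $CKN$ quotient at $v$ is $\leq$ the $CKN$ quotient at $V_{0}$, and that common upper value is $d^{E}\,GN\left(N,p,q,r,\mu,\theta,s\right)$; this proves that $CKN\left(N,\mu,\theta,s,p,q,r\right)$ is attained (at $V_{0}$) and equals $d^{E}\,GN\left(N,p,q,r,\mu,\theta,s\right)$, i.e.\ the asserted identity with $d=\frac{N-p}{N-p-\mu}$. The main obstacle, as indicated, is exactly the equality case: $D_{N,d,p}$ is in general strictly gradient-decreasing, so the scheme yields an honest \emph{equality} of sharp constants only because the $GN$-extremal may be taken radial and radial functions lie precisely on the equality locus of Lemma~\ref{l1}(2).
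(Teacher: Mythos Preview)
Your proof is correct and follows essentially the same route as the paper: take a radial maximizer $U_{0}$ of $GN$, set $V_{0}=D_{N,d,p}^{-1}U_{0}$ with $d=\frac{N-p}{N-p-\mu}$, use Lemma~\ref{l1} to convert the three integrals, and exploit the equality case of Lemma~\ref{l1}(2) on the radial $V_{0}$ to saturate the bound. Your write-up is in fact slightly more explicit than the paper's in tracking the power $d^{E}$ and in flagging that the only delicate point is the equality locus of the gradient estimate.
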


\begin{lemma}
\label{l4.4}Assume (C1). If $V_{0}$ is a maximizer of $CKN\left(  N,\mu
,\theta,s,p,q,r\right)  $, then $V_{0}$ is radially symmetric.

\begin{proof}
let $V_{0}$ be a maximizer of $CKN\left(  N,\mu,\theta,s,p,q,r\right)  $. We
set $U_{0}=D_{N,d,p}V_{0}$ where $d=\frac{N-p}{N-p-\mu}$. We will show that
$U_{0}$ is a maximizer of $GN\left(  N,p,q,r,\mu,\theta,s\right)  $. Indeed
for any radial function $u$ (we can just choose radial functions because of
the symmetrization arguments), we define
\[
v=D_{N,d,p}^{-1}u\text{, i.e. }u=D_{N,d,p}v.
\]
By Lemma \ref{l1}, we get%
\[
\int\limits_{%
\mathbb{R}
^{N}}\left\vert v\right\vert ^{r}\frac{dx}{\left\vert x\right\vert ^{s}%
}=d^{1+\frac{p-1}{p}r}\int\limits_{%
\mathbb{R}
^{N}}\frac{\left\vert u\right\vert ^{r}}{\left\vert x\right\vert ^{N+sd-Nd}%
}dx
\]%
\[
\int\limits_{%
\mathbb{R}
^{N}}\left\vert v\right\vert ^{q}\frac{dx}{\left\vert x\right\vert ^{\theta}%
}=d^{1+\frac{p-1}{p}q}\int\limits_{%
\mathbb{R}
^{N}}\frac{\left\vert u\right\vert ^{q}}{\left\vert x\right\vert ^{N+\theta
d-Nd}}dx
\]%
\[
\int\limits_{%
\mathbb{R}
^{N}}\frac{|\nabla v|^{p}}{\left\vert x\right\vert ^{\mu}}dx=\int\limits_{%
\mathbb{R}
^{N}}\left\vert \nabla u\right\vert ^{p}dx
\]
and%
\[
\int\limits_{%
\mathbb{R}
^{N}}\left\vert V_{0}\right\vert ^{r}\frac{dx}{\left\vert x\right\vert ^{s}%
}=d^{1+\frac{p-1}{p}r}\int\limits_{%
\mathbb{R}
^{N}}\frac{\left\vert U_{0}\right\vert ^{r}}{\left\vert x\right\vert
^{N+sd-Nd}}dx
\]%
\[
\int\limits_{%
\mathbb{R}
^{N}}\left\vert V_{0}\right\vert ^{q}\frac{dx}{\left\vert x\right\vert
^{\theta}}=d^{1+\frac{p-1}{p}q}\int\limits_{%
\mathbb{R}
^{N}}\frac{\left\vert U_{0}\right\vert ^{q}}{\left\vert x\right\vert
^{N+\theta d-Nd}}dx
\]%
\[
\int\limits_{%
\mathbb{R}
^{N}}\frac{|\nabla V_{0}|^{p}}{\left\vert x\right\vert ^{\mu}}dx\geq
\int\limits_{%
\mathbb{R}
^{N}}\left\vert \nabla U_{0}\right\vert ^{p}dx.
\]
Hence%
\begin{align*}
&  d^{\frac{1}{r}+\frac{p-1}{p}-\frac{1-a}{q}-\frac{p-1}{p}\left(  1-a\right)
}\frac{\left(  \int\limits_{%
\mathbb{R}
^{N}}\frac{\left\vert U_{0}\right\vert ^{r}}{\left\vert x\right\vert
^{N+sd-Nd}}dx\right)  ^{1/r}}{\left(  \int\limits_{%
\mathbb{R}
^{N}}\left\vert \nabla U_{0}\right\vert ^{p}dx\right)  ^{\frac{a}{p}}\left(
\int\limits_{%
\mathbb{R}
^{N}}\frac{\left\vert U_{0}\right\vert ^{q}}{\left\vert x\right\vert
^{N+\theta d-Nd}}dx\right)  ^{\frac{1-a}{q}}}\\
&  \geq\frac{\left(  \int\limits_{%
\mathbb{R}
^{N}}\left\vert V_{0}\right\vert ^{r}\frac{dx}{\left\vert x\right\vert ^{s}%
}\right)  ^{1/r}}{\left(  \int\limits_{%
\mathbb{R}
^{N}}\frac{|\nabla V_{0}|^{p}}{\left\vert x\right\vert ^{\mu}}dx\right)
^{\frac{a}{p}}\left(  \int\limits_{%
\mathbb{R}
^{N}}\left\vert V_{0}\right\vert ^{q}\frac{dx}{\left\vert x\right\vert
^{\theta}}\right)  ^{\frac{1-a}{q}}}\\
&  \geq\frac{\left(  \int\limits_{%
\mathbb{R}
^{N}}\left\vert v\right\vert ^{r}\frac{dx}{\left\vert x\right\vert ^{s}%
}\right)  ^{1/r}}{\left(  \int\limits_{%
\mathbb{R}
^{N}}\frac{|\nabla v|^{p}}{\left\vert x\right\vert ^{\mu}}dx\right)
^{\frac{a}{p}}\left(  \int\limits_{%
\mathbb{R}
^{N}}\left\vert v\right\vert ^{q}\frac{dx}{\left\vert x\right\vert ^{\theta}%
}\right)  ^{\frac{1-a}{q}}}\\
&  =d^{\frac{1}{r}+\frac{p-1}{p}-\frac{1-a}{q}-\frac{p-1}{p}\left(
1-a\right)  }\frac{\left(  \int\limits_{%
\mathbb{R}
^{N}}\frac{\left\vert u\right\vert ^{r}}{\left\vert x\right\vert ^{N+sd-Nd}%
}dx\right)  ^{1/r}}{\left(  \int\limits_{%
\mathbb{R}
^{N}}\left\vert \nabla u\right\vert ^{p}dx\right)  ^{\frac{a}{p}}\left(
\int\limits_{%
\mathbb{R}
^{N}}\frac{\left\vert u\right\vert ^{q}}{\left\vert x\right\vert ^{N+\theta
d-Nd}}dx\right)  ^{\frac{1-a}{q}}}.
\end{align*}
Moreover, it is easy to see that
\begin{align*}
&  d^{\frac{1}{r}+\frac{p-1}{p}-\frac{1-a}{q}-\frac{p-1}{p}\left(  1-a\right)
}\frac{\left(  \int\limits_{%
\mathbb{R}
^{N}}\frac{\left\vert U_{0}\right\vert ^{r}}{\left\vert x\right\vert
^{N+sd-Nd}}dx\right)  ^{1/r}}{\left(  \int\limits_{%
\mathbb{R}
^{N}}\left\vert \nabla U_{0}\right\vert ^{p}dx\right)  ^{\frac{a}{p}}\left(
\int\limits_{%
\mathbb{R}
^{N}}\frac{\left\vert U_{0}\right\vert ^{q}}{\left\vert x\right\vert
^{N+\theta d-Nd}}dx\right)  ^{\frac{1-a}{q}}}\\
&  =\frac{\left(  \int\limits_{%
\mathbb{R}
^{N}}\left\vert V_{0}\right\vert ^{r}\frac{dx}{\left\vert x\right\vert ^{s}%
}\right)  ^{1/r}}{\left(  \int\limits_{%
\mathbb{R}
^{N}}\frac{|\nabla V_{0}|^{p}}{\left\vert x\right\vert ^{\mu}}dx\right)
^{\frac{a}{p}}\left(  \int\limits_{%
\mathbb{R}
^{N}}\left\vert V_{0}\right\vert ^{q}\frac{dx}{\left\vert x\right\vert
^{\theta}}\right)  ^{\frac{1-a}{q}}}.
\end{align*}
Hence
\[
\int\limits_{%
\mathbb{R}
^{N}}\frac{|\nabla V_{0}|^{p}}{\left\vert x\right\vert ^{\mu}}dx=\int\limits_{%
\mathbb{R}
^{N}}\left\vert \nabla U_{0}\right\vert ^{p}dx.
\]
So $V_{0}$ is radial.
\end{proof}
\end{lemma}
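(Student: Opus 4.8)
The plan is to run the argument of Lemma~\ref{l4.3} in reverse: pull a maximizer of $CKN$ back through the transform $D_{N,d,p}$ to a competitor for $GN$, and then read off radial symmetry from the equality case of the sharp gradient inequality in Lemma~\ref{l1}(2). So let $V_{0}$ be a maximizer of $CKN\left(N,\mu,\theta,s,p,q,r\right)$, set $d=\frac{N-p}{N-p-\mu}$ (which satisfies $d>1$ because $1<p<p+\mu<N$), and put $U_{0}=D_{N,d,p}V_{0}$; then $U_{0}\in D_{0,N+\theta d-Nd}^{p,q}\left(\mathbb{R}^{N}\right)$ by Lemma~\ref{l1}. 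Since this choice of $d$ makes $d\left(p+\mu-N\right)+N-p=0$, Lemma~\ref{l1} yields the two identities
\[
\int_{\mathbb{R}^{N}}\left\vert V_{0}\right\vert ^{r}\frac{dx}{\left\vert x\right\vert ^{s}}=d^{1+\frac{p-1}{p}r}\int_{\mathbb{R}^{N}}\frac{\left\vert U_{0}\right\vert ^{r}}{\left\vert x\right\vert ^{N+sd-Nd}}\,dx,\qquad \int_{\mathbb{R}^{N}}\left\vert V_{0}\right\vert ^{q}\frac{dx}{\left\vert x\right\vert ^{\theta}}=d^{1+\frac{p-1}{p}q}\int_{\mathbb{R}^{N}}\frac{\left\vert U_{0}\right\vert ^{q}}{\left\vert x\right\vert ^{N+\theta d-Nd}}\,dx
\]
together with $\int_{\mathbb{R}^{N}}\left\vert \nabla U_{0}\right\vert ^{p}\,dx\le\int_{\mathbb{R}^{N}}\left\vert \nabla V_{0}\right\vert ^{p}\frac{dx}{\left\vert x\right\vert ^{\mu}}$, the last inequality being an equality precisely when $V_{0}$ is radially symmetric.

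Substituting these relations into the $CKN$ quotient at $V_{0}$ and using $a>0$ (which holds under (C1)), the denominator factor $\left(\int\left\vert \nabla V_{0}\right\vert ^{p}\left\vert x\right\vert ^{-\mu}\right)^{a/p}$ is bounded below by $\left(\int\left\vert \nabla U_{0}\right\vert ^{p}\right)^{a/p}$, and gathering the resulting powers of $d$ gives
\[
\frac{\left(\int_{\mathbb{R}^{N}}\left\vert V_{0}\right\vert ^{r}\frac{dx}{\left\vert x\right\vert ^{s}}\right)^{1/r}}{\left(\int_{\mathbb{R}^{N}}\left\vert \nabla V_{0}\right\vert ^{p}\frac{dx}{\left\vert x\right\vert ^{\mu}}\right)^{a/p}\left(\int_{\mathbb{R}^{N}}\left\vert V_{0}\right\vert ^{q}\frac{dx}{\left\vert x\right\vert ^{\theta}}\right)^{(1-a)/q}}\le d^{\kappa}\,\frac{\left(\int_{\mathbb{R}^{N}}\frac{\left\vert U_{0}\right\vert ^{r}}{\left\vert x\right\vert ^{N+sd-Nd}}dx\right)^{1/r}}{\left(\int_{\mathbb{R}^{N}}\left\vert \nabla U_{0}\right\vert ^{p}dx\right)^{a/p}\left(\int_{\mathbb{R}^{N}}\frac{\left\vert U_{0}\right\vert ^{q}}{\left\vert x\right\vert ^{N+\theta d-Nd}}dx\right)^{(1-a)/q}}\le d^{\kappa}\,GN\left(N,p,q,r,\mu,\theta,s\right),
\]
where $\kappa=\frac{1}{r}+\frac{p-1}{p}-\frac{1-a}{q}-\frac{p-1}{p}\left(1-a\right)$ and the last step is just the definition of $GN\left(N,p,q,r,\mu,\theta,s\right)$ as a supremum. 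Note that $U_{0}$ is an admissible competitor for that supremum by the first paragraph.

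To conclude, I would invoke Lemma~\ref{l4.3}: the left-hand side above equals $CKN\left(N,\mu,\theta,s,p,q,r\right)$ since $V_{0}$ is a maximizer, and $CKN\left(N,\mu,\theta,s,p,q,r\right)=d^{\kappa}\,GN\left(N,p,q,r,\mu,\theta,s\right)$ by Lemma~\ref{l4.3}. Hence every inequality in the chain is an equality; in particular $\int_{\mathbb{R}^{N}}\left\vert \nabla U_{0}\right\vert ^{p}\,dx=\int_{\mathbb{R}^{N}}\left\vert \nabla V_{0}\right\vert ^{p}\frac{dx}{\left\vert x\right\vert ^{\mu}}$, and the equality statement in Lemma~\ref{l1}(2) then forces $V_{0}$ to be radially symmetric. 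As a byproduct the same chain shows that $U_{0}=D_{N,d,p}V_{0}$ attains $GN\left(N,p,q,r,\mu,\theta,s\right)$, which is the fact used afterwards to extract the explicit forms of the optimizers. Since the analytic content (the Jacobian computation, the change of variables, and the Cauchy--Schwarz step behind the gradient inequality) has already been absorbed into Lemmas~\ref{l1} and \ref{l4.3}, the only point here needing care is the rigidity: a strict loss in the gradient term would propagate to a strict loss in the quotient only because $a>0$, and it is exactly this strictness that is contradicted; the remainder is the bookkeeping of $d$-powers identical to that in Lemma~\ref{l4.3}.
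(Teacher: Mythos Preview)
Your proof is correct and follows essentially the same approach as the paper: transform $V_{0}$ via $D_{N,d,p}$, use the integral identities and gradient inequality from Lemma~\ref{l1}, and read off radiality from the equality case. Your version is slightly more streamlined in that you invoke the identity $CKN=d^{\kappa}\,GN$ from Lemma~\ref{l4.3} directly to force equality in the chain, whereas the paper first re-establishes that $U_{0}$ is a $GN$ maximizer by comparing against arbitrary radial competitors and then collapses the chain; the content is the same and your byproduct that $U_{0}$ attains $GN$ matches what the paper uses downstream.
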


\begin{lemma}
\label{l4.5}Assume (C1) with $s=\theta=\frac{N\mu}{N-p}$. If $r=p\frac
{q-1}{p-1}$, then with $\delta=Np-q\left(  N-p\right)  :$
\begin{align*}
&  CKN\left(  N,s,\mu,p,q,r\right) \\
&  =\left(  \frac{N-p}{N-p-\mu}\right)  ^{\frac{1}{r}+\frac{p-1}{p}-\frac
{1-a}{q}-\frac{p-1}{p}\left(  1-a\right)  }\\
&  \times\left(  \frac{q-p}{p\sqrt{\pi}}\right)  ^{a}\left(  \frac
{pq}{N\left(  q-p\right)  }\right)  ^{\frac{a}{p}}\left(  \frac{\delta}%
{pq}\right)  ^{\frac{1}{r}}\left(  \frac{\Gamma\left(  q\frac{p-1}%
{q-p}\right)  \Gamma\left(  \frac{N}{2}+1\right)  }{\Gamma\left(  \frac
{p-1}{p}\frac{\delta}{q-p}\right)  \Gamma\left(  N\frac{p-1}{p}+1\right)
}\right)  ^{\frac{a}{N}}%
\end{align*}
and all the maximizers have the form
\[
V_{0}\left(  x\right)  =A\left(  1+B\left\vert x\right\vert ^{\frac{1}{d}%
\frac{p}{p-1}}\right)  ^{-\frac{p-1}{q-p}}\text{ for some }A\in%
\mathbb{R}
\text{, }B>0
\]
where $d=\frac{N-p}{N-p-\mu}.$

\begin{proof}
When $r=p\frac{q-1}{p-1}$ and $s=\theta=\frac{N\mu}{N-p}$, we have from
\cite{Agu1, Agu2, DePDo1, DePDo} that
\begin{align*}
&  GN\left(  N,p,q,r\right) \\
&  =\left(  \frac{q-p}{p\sqrt{\pi}}\right)  ^{a}\left(  \frac{pq}{N\left(
q-p\right)  }\right)  ^{\frac{a}{p}}\left(  \frac{\delta}{pq}\right)
^{\frac{1}{r}}\left(  \frac{\Gamma\left(  q\frac{p-1}{q-p}\right)
\Gamma\left(  \frac{N}{2}+1\right)  }{\Gamma\left(  \frac{p-1}{p}\frac{\delta
}{q-p}\right)  \Gamma\left(  N\frac{p-1}{p}+1\right)  }\right)  ^{\frac{a}{N}}%
\end{align*}
and all the maximizers have the form%
\[
U_{0}\left(  x\right)  =A\left(  1+B\left\vert x-\overline{x}\right\vert
^{\frac{p}{p-1}}\right)  ^{-\frac{p-1}{q-p}}\text{ for some }A\in%
\mathbb{R}
\text{, }B>0\text{, }\overline{x}\in%
\mathbb{R}
^{N}\text{.}%
\]
Now, let $V_{0}$ be a maximizer of $CKN\left(  N,s,\mu,p,q,r\right)  $. By
Lemma \ref{l4.4}, $D_{N,d,p}V_{0}$ is a maximizer of $GN\left(
N,p,q,r\right)  $. Hence%
\[
D_{N,d,p}V_{0}\left(  x\right)  =A\left(  1+B\left\vert x-\overline
{x}\right\vert ^{\frac{p}{p-1}}\right)  ^{-\frac{p-1}{q-p}}.
\]
It means that%
\[
V_{0}\left(  x\right)  =A^{\prime}\left(  1+B\left\vert \left\vert
x\right\vert ^{\frac{1}{d}-1}x-\overline{x}\right\vert ^{\frac{p}{p-1}%
}\right)  ^{-\frac{p-1}{q-p}}\text{.}%
\]
Noting that $V_{0}$ is radial, we conclude that $\overline{x}=0$.
\end{proof}
\end{lemma}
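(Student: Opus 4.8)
The plan is to deduce Lemma~\ref{l4.5} from the transform machinery of Lemma~\ref{l1}, Lemma~\ref{l4.3} and Lemma~\ref{l4.4}, combined with the sharp Gagliardo--Nirenberg inequality of Del Pino--Dolbeault and Agueh. The first and key observation is that the choice $s=\theta=\frac{N\mu}{N-p}$ is precisely the one that trivializes the weights appearing in $GN(N,p,q,r,\mu,\theta,s)$: with $d=\frac{N-p}{N-p-\mu}$ one computes $N+sd-Nd=N+\theta d-Nd=0$, so that
\[
GN(N,p,q,r,\mu,\theta,s)=\sup_{u}\frac{\left(\int_{\mathbb{R}^N}|u|^r\,dx\right)^{1/r}}{\left(\int_{\mathbb{R}^N}|\nabla u|^p\,dx\right)^{a/p}\left(\int_{\mathbb{R}^N}|u|^q\,dx\right)^{(1-a)/q}}
\]
is exactly the classical sharp Gagliardo--Nirenberg constant. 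A routine check shows that under (C1) with $s=\theta=\frac{N\mu}{N-p}$ the exponent $a$ simplifies to $a=\frac{Np(r-q)}{\delta r}$, which is precisely the Gagliardo--Nirenberg scaling exponent, and that the relation $r=p\frac{q-1}{p-1}$ puts us exactly in the special class treated in \cite{Agu1,Agu2,DePDo1,DePDo}. Invoking their theorem then yields the stated value of $GN(N,p,q,r)$ and the fact that its extremals are exactly the functions $U_0(x)=A\bigl(1+B|x-\overline{x}|^{\frac{p}{p-1}}\bigr)^{-\frac{p-1}{q-p}}$ with $A\in\mathbb{R}$, $B>0$, $\overline{x}\in\mathbb{R}^N$.

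The value of $CKN(N,s,\mu,p,q,r)$ now follows by substituting the explicit Del Pino--Dolbeault constant into the identity $CKN=d^{\frac1r+\frac{p-1}{p}-\frac{1-a}{q}-\frac{p-1}{p}(1-a)}\,GN(N,p,q,r,\mu,\theta,s)$ established in Lemma~\ref{l4.3}. For the extremals, let $V_0$ be a maximizer of $CKN(N,s,\mu,p,q,r)$. By Lemma~\ref{l4.4} it is radially symmetric, and the argument used in the proof of that lemma shows that $U_0:=D_{N,d,p}V_0$ is a maximizer of $GN(N,p,q,r,\mu,\theta,s)=GN(N,p,q,r)$; since $D_{N,d,p}$ sends radial functions to radial functions, $U_0$ is radial. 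Radial symmetry of $U_0$ forces $\overline{x}=0$ in the Del Pino--Dolbeault description, so $U_0(x)=A\bigl(1+B|x|^{\frac{p}{p-1}}\bigr)^{-\frac{p-1}{q-p}}$.

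It then remains to invert the transform. From $U_0=D_{N,d,p}V_0$, i.e. $U_0(x)=d^{-\frac{p-1}{p}}V_0(|x|^{d-1}x)$, and the substitution $y=|x|^{d-1}x$, which is a homeomorphism of $\mathbb{R}^N$ with $|y|=|x|^{d}$ hence $|x|=|y|^{1/d}$, one solves for $V_0$ and obtains $V_0(y)=d^{\frac{p-1}{p}}A\bigl(1+B|y|^{\frac{1}{d}\frac{p}{p-1}}\bigr)^{-\frac{p-1}{q-p}}$, which is the asserted form after renaming the constant. Conversely, every function of this form is radial and is carried by $D_{N,d,p}$ to a Del Pino--Dolbeault extremal of $GN$, hence by Lemma~\ref{l4.3} is a maximizer of $CKN$, so the list is exhaustive. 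The step that requires the most care is the bookkeeping needed to confirm that (C1) with $s=\theta=\frac{N\mu}{N-p}$ and $r=p\frac{q-1}{p-1}$ genuinely reduces the weighted problem to the unweighted Gagliardo--Nirenberg inequality in the Del Pino--Dolbeault regime, together with the observation that it is exactly the radiality of the transformed extremal (guaranteed by Lemma~\ref{l4.4}) that eliminates the free translation parameter $\overline{x}$; the change of variables itself introduces no difficulty since $x\mapsto|x|^{d-1}x$ is a bijection of $\mathbb{R}^N$.
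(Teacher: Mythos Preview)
Your proof is correct and follows essentially the same route as the paper: reduce to the unweighted Gagliardo--Nirenberg problem via the observation $N+sd-Nd=N+\theta d-Nd=0$, invoke Del Pino--Dolbeault/Agueh for the sharp constant and the full family of extremals, pass back through Lemma~\ref{l4.3} for the constant, and use the radiality from Lemma~\ref{l4.4} to kill the translation parameter $\overline{x}$. Your version is slightly more explicit in two places --- you spell out the computation $N+sd-Nd=0$ and you include the easy converse direction showing every function of the stated form is indeed a maximizer --- but these are refinements of presentation rather than a different argument.
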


\begin{lemma}
\label{l4.6}Assume (C1) with $s=\theta=\frac{N\mu}{N-p}$. If $q=p\frac
{r-1}{p-1}$, then with $\delta=Np-r\left(  N-p\right)  :$
\begin{align*}
&  CKN\left(  N,s,\mu,p,q,r\right) \\
&  =\left(  \frac{N-p}{N-p-\mu}\right)  ^{\frac{1}{r}+\frac{p-1}{p}-\frac
{1-a}{q}-\frac{p-1}{p}\left(  1-a\right)  }\\
&  \times\left(  \frac{p-r}{p\sqrt{\pi}}\right)  ^{a}\left(  \frac
{pr}{N\left(  p-r\right)  }\right)  ^{\frac{a}{p}}\left(  \frac{pr}{\delta
}\right)  ^{\frac{1-a}{q}}\left(  \frac{\Gamma\left(  \frac{p-1}{p}%
\frac{\delta}{p-r}+1\right)  \Gamma\left(  \frac{N}{2}+1\right)  }%
{\Gamma\left(  r\frac{p-1}{p-r}+1\right)  \Gamma\left(  N\frac{p-1}%
{p}+1\right)  }\right)  ^{\frac{a}{N}}.
\end{align*}
If $r>2-\frac{1}{p}$, then all the maximizers of $GN\left(  N,p,q,r\right)  $
have the form%
\[
V_{0}\left(  x\right)  =A\left(  1-B\left\vert x\right\vert ^{\frac{N-p-\mu
}{N-p}\frac{p}{p-1}}\right)  _{+}^{-\frac{p-1}{r-p}}\text{ for some }A\in%
\mathbb{R}
\text{, }B>0\text{.}%
\]

\begin{proof}
When $q=p\frac{r-1}{p-1}$ and $s=\theta=\frac{N\mu}{N-p}$, we have from
\cite{Agu1, Agu2, DePDo1, DePDo} that
\begin{align*}
&  GN\left(  N,p,q,r\right)  \\
&  =\left(  \frac{p-r}{p\sqrt{\pi}}\right)  ^{a}\left(  \frac{pr}{N\left(
p-r\right)  }\right)  ^{\frac{a}{p}}\left(  \frac{pr}{\delta}\right)
^{\frac{1-a}{q}}\left(  \frac{\Gamma\left(  \frac{p-1}{p}\frac{\delta}%
{p-r}+1\right)  \Gamma\left(  \frac{N}{2}+1\right)  }{\Gamma\left(
r\frac{p-1}{p-r}+1\right)  \Gamma\left(  N\frac{p-1}{p}+1\right)  }\right)
^{\frac{a}{N}}.
\end{align*}
Also, when $r>2-\frac{1}{p}$, all the maximizers of $GN\left(  N,p,q,r\right)
$ have the form%
\[
U_{0}\left(  x\right)  =A\left(  1-B\left\vert x-\overline{x}\right\vert
^{\frac{p}{p-1}}\right)  _{+}^{-\frac{p-1}{r-p}}\text{ for some }A\in%
\mathbb{R}
\text{, }B>0\text{, }\overline{x}\in%
\mathbb{R}
^{N}\text{.}%
\]
Now, let $V_{0}$ be a maximizer of $CKN\left(  N,s,\mu,p,q,r\right)  $. By
Lemma \ref{l4.4}, $D_{N,d,p}V_{0}$ is a maximizer of $GN\left(
N,p,q,r\right)  $. Hence%
\[
D_{N,d,p}V_{0}\left(  x\right)  =A\left(  1-B\left\vert x-\overline
{x}\right\vert ^{\frac{p}{p-1}}\right)  _{+}^{-\frac{p-1}{r-p}}.
\]
It means that%
\[
V_{0}\left(  x\right)  =A^{\prime}\left(  1-B\left\vert \left\vert
x\right\vert ^{\frac{1}{d}-1}x-\overline{x}\right\vert ^{\frac{p}{p-1}%
}\right)  _{+}^{-\frac{p-1}{r-p}}\text{.}%
\]
Noting that $V_{0}$ is radially symmetric, we conclude that $\overline{x}=0$.
\end{proof}
\end{lemma}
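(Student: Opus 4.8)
The plan is to run the argument of Lemma \ref{l4.5} in the ``dual'' endpoint class $q=p\frac{r-1}{p-1}$. First I would record the relevant Del Pino--Dolbeault/Agueh facts from \cite{Agu1,Agu2,DePDo1,DePDo}: in this class the sharp constant of the (unweighted) Gagliardo--Nirenberg inequality $GN\left(N,p,q,r\right)$ is the displayed product of powers and Gamma functions, and, provided $r>2-\frac1p$, every extremal of $GN\left(N,p,q,r\right)$ has the form $U_{0}(x)=A\bigl(1-B\left\vert x-\overline{x}\right\vert ^{\frac{p}{p-1}}\bigr)_{+}^{-\frac{p-1}{r-p}}$ for some $A\in\mathbb{R}$, $B>0$, $\overline{x}\in\mathbb{R}^{N}$. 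These are the only external inputs; everything else is already in place from the preliminary lemmas and Lemmas \ref{l4.1}--\ref{l4.4}.

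Second, I would observe that under the normalization $s=\theta=\frac{N\mu}{N-p}$ the transformed weights appearing in the definition of $GN\left(N,p,q,r,\mu,\theta,s\right)$ disappear: with $d=\frac{N-p}{N-p-\mu}$ one has $sd=\frac{N\mu}{N-p-\mu}$ and $Nd=\frac{N\left(N-p\right)}{N-p-\mu}$, so $N+sd-Nd=N+\theta d-Nd=0$ and therefore $GN\left(N,p,q,r,\mu,\theta,s\right)=GN\left(N,p,q,r\right)$. Lemma \ref{l4.3} then gives at once
\[
CKN\left(N,s,\mu,p,q,r\right)=d^{\frac{1}{r}+\frac{p-1}{p}-\frac{1-a}{q}-\frac{p-1}{p}\left(1-a\right)}\,GN\left(N,p,q,r\right),
\]
which is exactly the asserted value of the best constant.

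Third, for the extremals: let $V_{0}$ be any maximizer of $CKN\left(N,s,\mu,p,q,r\right)$. By Lemma \ref{l4.4} it is radially symmetric, and, arguing as in the proofs of Lemmas \ref{l4.3} and \ref{l4.4}, $U_{0}:=D_{N,d,p}V_{0}$ is a maximizer of $GN\left(N,p,q,r\right)$; since $r>2-\frac1p$, this forces $U_{0}(x)=A\bigl(1-B\left\vert x-\overline{x}\right\vert ^{\frac{p}{p-1}}\bigr)_{+}^{-\frac{p-1}{r-p}}$. Unwinding $D_{N,d,p}V_{0}(x)=d^{-\frac{p-1}{p}}V_{0}(\left\vert x\right\vert ^{d-1}x)$ and putting $y=\left\vert x\right\vert ^{d-1}x$, so that $\left\vert x\right\vert =\left\vert y\right\vert ^{1/d}$ and $x=\left\vert y\right\vert ^{\frac1d-1}y$, one obtains
\[
V_{0}(y)=A'\bigl(1-B\bigl\vert \left\vert y\right\vert ^{\frac1d-1}y-\overline{x}\bigr\vert ^{\frac{p}{p-1}}\bigr)_{+}^{-\frac{p-1}{r-p}}.
\]
Radial symmetry of $V_{0}$ forces $\overline{x}=0$, and then $\bigl\vert \left\vert y\right\vert ^{\frac1d-1}y\bigr\vert ^{\frac{p}{p-1}}=\left\vert y\right\vert ^{\frac1d\frac{p}{p-1}}$ together with $\frac1d=\frac{N-p-\mu}{N-p}$ yields the claimed form of $V_{0}$.

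I do not expect any genuine conceptual obstacle: the scheme is identical to that of Lemmas \ref{l4.1}--\ref{l4.5}. The two computations that need care are the verification that the shifted exponents $N+sd-Nd$ and $N+\theta d-Nd$ vanish when $s=\theta=\frac{N\mu}{N-p}$ --- so that the weighted Gagliardo--Nirenberg quantity really collapses to the classical one and the Del Pino--Dolbeault result applies verbatim --- and the change of variables unwinding $D_{N,d,p}$. It is worth emphasizing that the hypothesis $r>2-\frac1p$ is not incidental: it is exactly the range in which the classification of \emph{all} extremals of $GN\left(N,p,q,r\right)$ is known, so with this method the conclusion about the form of every maximizer of $CKN$ cannot be pushed below it.
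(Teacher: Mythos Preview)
Your proposal is correct and follows essentially the same route as the paper's own proof: cite the Del Pino--Dolbeault/Agueh sharp constant and extremal classification for $GN(N,p,q,r)$, combine with Lemma \ref{l4.3} to obtain the CKN constant, and then use Lemma \ref{l4.4} together with the inverse of $D_{N,d,p}$ and radiality to pin down the form of every CKN maximizer. If anything, you are slightly more explicit than the paper in verifying that $N+sd-Nd=N+\theta d-Nd=0$ when $s=\theta=\frac{N\mu}{N-p}$ and in spelling out the change of variables $y=\lvert x\rvert^{d-1}x$.
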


\section{CKN inequalities in the regime (\ref{C4})}

In this section, we will concern the CKN inequalities in the class
(\ref{C4}):
\begin{align}
p &  =2<2+\mu<N\text{, }2<r=2(q-1)<\frac{2N}{N-2}\tag{C2}\\
\mu+2 &  >s=\theta>\frac{N\mu}{N-2}\text{; }a=\frac{\left(  N-s\right)
\left[  q-2\right]  }{\left[  \left(  N-s\right)  2-\left(  N-\mu-2\right)
q\right]  (q-1)}\nonumber
\end{align}
Recall that%
\[
CKN\left(  N,\mu,s,q\right)  =\sup_{u\in D_{\mu,s}^{2,q}\left(
\mathbb{R}
^{N}\right)  }\frac{\left(  \int\limits_{%
\mathbb{R}
^{N}}\left\vert u\right\vert ^{2(q-1)}\frac{dx}{\left\vert x\right\vert ^{s}%
}\right)  ^{\frac{1}{2(q-1)}}}{\left(  \int\limits_{%
\mathbb{R}
^{N}}\left\vert \nabla u\right\vert ^{2}\frac{dx}{\left\vert x\right\vert
^{\mu}}\right)  ^{\frac{a}{2}}\left(  \int\limits_{%
\mathbb{R}
^{N}}\left\vert u\right\vert ^{q}\frac{dx}{\left\vert x\right\vert ^{s}%
}\right)  ^{\frac{1-a}{q}}}.
\]
We also denote%
\[
CKN_{1}\left(  N,\mu,s,q\right)  =\sup_{u\in D_{0,s}^{2,q}\left(
\mathbb{R}
^{N}\right)  }\frac{\left(  \int\limits_{%
\mathbb{R}
^{N}}\left\vert u\right\vert ^{2(q-1)}\frac{dx}{\left\vert x\right\vert
^{N+sd-Nd}}\right)  ^{\frac{1}{2(q-1)}}}{\left(  \int\limits_{%
\mathbb{R}
^{N}}\left\vert \nabla u\right\vert ^{2}dx\right)  ^{\frac{a}{2}}\left(
\int\limits_{%
\mathbb{R}
^{N}}\left\vert u\right\vert ^{q}\frac{dx}{\left\vert x\right\vert ^{N+sd-Nd}%
}\right)  ^{\frac{1-a}{q}}}%
\]
where $d=\frac{N-2}{N-2-\mu}$. 

\begin{proof}
[Proof of Theorem \ref{T11}]For any $v\in D_{\mu,s}^{2,q}\left(
\mathbb{R}
^{N}\right)  $, we have with $u=D_{N,d,2}v$, that%
\[
\int\limits_{%
\mathbb{R}
^{N}}\left\vert \nabla v\right\vert ^{2}\frac{dx}{\left\vert x\right\vert
^{\mu}}\geq\int\limits_{%
\mathbb{R}
^{N}}|\nabla u|^{2}dx
\]%
\[
\int\limits_{%
\mathbb{R}
^{N}}\left\vert v\right\vert ^{2(q-1)}\frac{dx}{\left\vert x\right\vert ^{s}%
}=d^{q}\int\limits_{%
\mathbb{R}
^{N}}\frac{\left\vert u\right\vert ^{2(q-1)}}{\left\vert x\right\vert
^{N+sd-Nd}}dx
\]%
\[
\int\limits_{%
\mathbb{R}
^{N}}\left\vert v\right\vert ^{q}\frac{dx}{\left\vert x\right\vert ^{s}%
}=d^{1+\frac{1}{2}q}\int\limits_{%
\mathbb{R}
^{N}}\frac{\left\vert u\right\vert ^{q}}{\left\vert x\right\vert ^{N+sd-Nd}%
}dx.
\]
By a result in \cite{DMN}, we have that $U\left(  x\right)  =D_{N,d,2}%
V_{0}\left(  x\right)  =C\left(  1+D\left\vert x\right\vert ^{2-N-sd+Nd}%
\right)  ^{-\frac{1}{q-2}}$ for some $C\in%
\mathbb{R}
$, $D>0$ is the maximizer for $CKN_{1}\left(  N,\mu,s,q\right)  $ for
$0<N+sd-Nd<2$ small enough. Hence we have by Lemma \ref{l1} that
\begin{align*}
\frac{\left(  \int\limits_{%
\mathbb{R}
^{N}}\left\vert v\right\vert ^{2(q-1)}\frac{dx}{\left\vert x\right\vert ^{s}%
}\right)  ^{\frac{1}{2(q-1)}}}{\left(  \int\limits_{%
\mathbb{R}
^{N}}\left\vert \nabla v\right\vert ^{2}\frac{dx}{\left\vert x\right\vert
^{\mu}}\right)  ^{\frac{a}{2}}\left(  \int\limits_{%
\mathbb{R}
^{N}}\left\vert v\right\vert ^{q}\frac{dx}{\left\vert x\right\vert ^{s}%
}\right)  ^{\frac{1-a}{q}}} &  \leq\frac{d^{\frac{q}{2(q-1)}}}{d^{\frac
{1+\frac{1}{2}q}{\frac{1-a}{q}}}}\frac{\left(  \int\limits_{%
\mathbb{R}
^{N}}\left\vert u\right\vert ^{2(q-1)}\frac{dx}{\left\vert x\right\vert
^{N+sd-Nd}}\right)  ^{\frac{1}{2(q-1)}}}{\left(  \int\limits_{%
\mathbb{R}
^{N}}\left\vert \nabla u\right\vert ^{2}dx\right)  ^{\frac{a}{2}}\left(
\int\limits_{%
\mathbb{R}
^{N}}\left\vert u\right\vert ^{q}\frac{dx}{\left\vert x\right\vert ^{N+sd-Nd}%
}\right)  ^{\frac{1-a}{q}}}\\
&  \leq\frac{d^{\frac{q}{2(q-1)}}}{d^{\frac{1+\frac{1}{2}q}{\frac{1-a}{q}}}%
}\frac{\left(  \int\limits_{%
\mathbb{R}
^{N}}\left\vert U\right\vert ^{2(q-1)}\frac{dx}{\left\vert x\right\vert
^{N+sd-Nd}}\right)  ^{\frac{1}{2(q-1)}}}{\left(  \int\limits_{%
\mathbb{R}
^{N}}\left\vert \nabla U\right\vert ^{2}dx\right)  ^{\frac{a}{2}}\left(
\int\limits_{%
\mathbb{R}
^{N}}\left\vert U\right\vert ^{q}\frac{dx}{\left\vert x\right\vert ^{N+sd-Nd}%
}\right)  ^{\frac{1-a}{q}}}\\
&  =\frac{\left(  \int\limits_{%
\mathbb{R}
^{N}}\left\vert V_{0}\right\vert ^{2(q-1)}\frac{dx}{\left\vert x\right\vert
^{s}}\right)  ^{\frac{1}{2(q-1)}}}{\left(  \int\limits_{%
\mathbb{R}
^{N}}\left\vert \nabla V_{0}\right\vert ^{2}\frac{dx}{\left\vert x\right\vert
^{\mu}}\right)  ^{\frac{a}{2}}\left(  \int\limits_{%
\mathbb{R}
^{N}}\left\vert V_{0}\right\vert ^{q}\frac{dx}{\left\vert x\right\vert ^{s}%
}\right)  ^{\frac{1-a}{q}}}.
\end{align*}
In other words, $V_{0}$ is the optimizer for $CKN\left(  N,\mu,s,q\right)  $.
\end{proof}

\section{Caffarelli-Kohn-Nirenberg inequality without the interpolation term:
the case $a=1$}

In this section, we will also consider the CKN inequalities without the
interpolation term for all $1<p<N$ and will concern the following range:
\begin{align}
1 &  <p<\text{ }p+\mu<N\text{, }\frac{\mu}{p}\leq\frac{s}{r}<\frac{\mu}%
{p}+1,\tag{C3}\label{C2}\\
r &  =\frac{\left(  N-s\right)  p}{N-\mu-p};\text{ }a=1.\nonumber
\end{align}
Noting that the condition $\frac{\mu}{p}\leq\frac{s}{r}\leq\frac{\mu}{p}+1$
comes from the constraints of the CKN inequality. In this case, we denote
\[
D_{\mu}^{1,p}\left(
\mathbb{R}
^{N};\frac{dx}{\left\vert x\right\vert ^{s}}\right)  =\left\{  u\in
L^{r}\left(  \frac{dx}{\left\vert x\right\vert ^{s}}\right)  :\int\limits_{%
\mathbb{R}
^{N}}\left\vert \nabla u\right\vert ^{p}\frac{dx}{\left\vert x\right\vert
^{\mu}}<\infty\right\}  ,
\]
and%
\[
CKN\left(  N,p,\mu,s\right)  =\sup_{u\in D_{\mu}^{1,p}\left(
\mathbb{R}
^{N};\frac{dx}{\left\vert x\right\vert ^{s}}\right)  }\frac{\left(
\int\limits_{%
\mathbb{R}
^{N}}\left\vert u\right\vert ^{r}\frac{dx}{\left\vert x\right\vert ^{s}%
}\right)  ^{1/r}}{\left(  \int\limits_{%
\mathbb{R}
^{N}}\left\vert \nabla u\right\vert ^{p}\frac{dx}{\left\vert x\right\vert
^{\mu}}\right)  ^{1/p}}.
\]

Then we will prove in this section the following result:

\begin{theorem}
\label{T1}Assume (\ref{C2}). Then $CKN\left(  N,p,\mu,s\right)  $ is achieved
with the extremals being of the following form:
\[
V_{c,\lambda}\left(  x\right)  =c\left(  \lambda+\left\vert x\right\vert
^{\frac{p+\mu-s}{p-1}}\right)  ^{-\frac{N-p-\mu}{p+\mu-s}}%
\]
for some $c\neq0$, $\lambda>0$.
\end{theorem}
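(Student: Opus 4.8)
The plan is to reduce (CKN) with $a=1$ under (\ref{C2}) to the Hardy--Sobolev inequality (\ref{HS}) via the transform $D_{N,d,p}$ of Section~2, taken with the critical exponent $d=\frac{N-p}{N-p-\mu}>1$; this is the value for which $d(p+\mu-N)+N-p=0$, so that by Lemma~\ref{l1}(2) the weighted Dirichlet energy $\int|\nabla\cdot|^{p}|x|^{-\mu}\,dx$ goes over to the unweighted one. Set $s_{d}:=N+sd-Nd=\frac{s(N-p)-N\mu}{N-p-\mu}$. The first step is the elementary algebra showing that, given $1<p<p+\mu<N$ and $r=\frac{(N-s)p}{N-\mu-p}$, the constraint $\frac{\mu}{p}\le\frac{s}{r}<\frac{\mu}{p}+1$ is equivalent to $0\le s_{d}<p$, and that $r=\frac{N-s_{d}}{N-p}\,p=:p^{\ast}(s_{d})$; so we land exactly in the admissible range of (\ref{HS}), with the endpoint $s_{d}=0$ (i.e. $\frac{s}{r}=\frac{\mu}{p}$) corresponding to the Sobolev inequality (\ref{S}).

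For any $v$ in the space $D_{\mu}^{1,p}(\mathbb{R}^{N};\frac{dx}{|x|^{s}})$ set $u=D_{N,d,p}v$. Lemma~\ref{l1}(1) applied to $f(t)=|t|^{r}$ gives $\int|v|^{r}|x|^{-s}\,dx=d^{1+\frac{p-1}{p}r}\int|u|^{r}|x|^{-s_{d}}\,dx$, while Lemma~\ref{l1}(2) gives $\int|\nabla v|^{p}|x|^{-\mu}\,dx\ge\int|\nabla u|^{p}\,dx$, with equality iff $v$ is radial. Hence, using (\ref{HS}),
\[
\frac{\left(\int|v|^{r}|x|^{-s}\,dx\right)^{1/r}}{\left(\int|\nabla v|^{p}|x|^{-\mu}\,dx\right)^{1/p}}\ \le\ d^{\frac{1}{r}+\frac{p-1}{p}}\,\frac{\left(\int|u|^{r}|x|^{-s_{d}}\,dx\right)^{1/r}}{\left(\int|\nabla u|^{p}\,dx\right)^{1/p}}\ \le\ d^{\frac{1}{r}+\frac{p-1}{p}}\,HS(N,p,s_{d}),
\]
so $CKN(N,p,\mu,s)\le d^{\frac{1}{r}+\frac{p-1}{p}}HS(N,p,s_{d})$. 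For the reverse bound and attainment, take the Hardy--Sobolev extremal $U_{0}(x)=c\bigl(\lambda+|x|^{\frac{p-s_{d}}{p-1}}\bigr)^{-\frac{N-p}{p-s_{d}}}$ (this is (\ref{HSE}) with $s$ replaced by $s_{d}$; for $s_{d}=0$ it is the Aubin--Talenti function), which is radial, and set $V_{0}=D_{N,d,p}^{-1}U_{0}$. Since $U_{0}$ is radial, both inequalities above are equalities, so $V_{0}$ attains $CKN(N,p,\mu,s)=d^{\frac{1}{r}+\frac{p-1}{p}}HS(N,p,s_{d})$. Finally, unwinding $U_{0}=D_{N,d,p}V_{0}$ (i.e. $V_{0}(|x|^{d-1}x)=d^{\frac{p-1}{p}}U_{0}(x)$) with $|x|=|y|^{1/d}$ and the identity $p-s_{d}=\frac{(N-p)(p+\mu-s)}{N-p-\mu}$, which yields $\frac{1}{d}\cdot\frac{p-s_{d}}{p-1}=\frac{p+\mu-s}{p-1}$ and $\frac{N-p}{p-s_{d}}=\frac{N-p-\mu}{p+\mu-s}$, one gets
\[
V_{0}(y)=d^{\frac{p-1}{p}}\,c\left(\lambda+|y|^{\frac{p+\mu-s}{p-1}}\right)^{-\frac{N-p-\mu}{p+\mu-s}}=V_{c',\lambda}(y),\qquad c'=d^{\frac{p-1}{p}}c.
\]

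To see that every extremal of $CKN(N,p,\mu,s)$ has this form, note that if $V_{0}$ is an extremal then the displayed chain, applied with $v=V_{0}$ and $u=D_{N,d,p}V_{0}$, is saturated; in particular equality holds in Lemma~\ref{l1}(2), which by Lemma~\ref{l2.1} forces $V_{0}$ to be radially symmetric, and then $U_{0}:=D_{N,d,p}V_{0}$ is a radial extremal of (\ref{HS}). Passing to $|U_{0}|$ if necessary and invoking the known classification of Hardy--Sobolev extremals (Ghoussoub--Yuan \cite{GY}, and Aubin \cite{Aubin}--Talenti \cite{Talenti} in the Sobolev case $s_{d}=0$, where radiality of $U_{0}$ additionally forces the centre to be the origin), $U_{0}$ must coincide, up to sign and scaling, with $u_{c,\lambda}$ of (\ref{HSE}); transforming back gives $V_{0}=V_{c',\lambda}$. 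The transform computations and the equality analysis here are the exact analogues of those in Lemmas~\ref{l4.3}--\ref{l4.4}; the only genuinely delicate points are the two arithmetic equivalences in the first paragraph — which have to be done carefully enough to include the endpoint $s_{d}=0$ — and the importation of the full classification of the extremals of the Hardy--Sobolev and Sobolev inequalities.
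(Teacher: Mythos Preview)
Your proof is correct and follows essentially the same route as the paper: the same transform $D_{N,d,p}$ with $d=\frac{N-p}{N-p-\mu}$ to reduce to the Hardy--Sobolev inequality with exponent $s_{d}=\frac{s(N-p)-N\mu}{N-p-\mu}$, the same use of Lemma~\ref{l1} for both the upper bound and the equality analysis forcing radiality, and the same back-substitution of the known $HS$ extremals. The paper organizes this into two separate lemmas (attainment, then radiality of all maximizers) before the final computation, whereas you present it in a single pass, but the content is identical; your explicit verification that $\frac{1}{r}+\frac{p-1}{p}$ matches the paper's exponent and your care about the Sobolev endpoint $s_{d}=0$ are welcome clarifications.
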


Theorem \ref{T1} was studied in \cite{Mu} by solving the corresponding ODE. In
this section, we will provide another proof using the transform $D_{N,d,p}$.

We note that $\frac{\mu}{p}\leq\frac{s}{r}<\frac{\mu}{p}+1$ means%
\[
\frac{N\mu}{N-p}\leq s<p+\mu.
\]

We also denote
\[
HS\left(  N,p,\mu,s\right)  =\sup_{D_{0}^{1,p}\left(
\mathbb{R}
^{N};\frac{dx}{\left\vert x\right\vert ^{\frac{s\left(  N-p\right)  -N\mu
}{N-p-\mu}}}\right)  }\frac{\left(  \int\limits_{%
\mathbb{R}
^{N}}\left\vert u\right\vert ^{p^{\ast}\left(  \frac{s\left(  N-p\right)
-N\mu}{N-p-\mu}\right)  }\frac{dx}{\left\vert x\right\vert ^{\frac{s\left(
N-p\right)  -N\mu}{N-p-\mu}}}\right)  ^{1/p^{\ast}\left(  \frac{s\left(
N-p\right)  -N\mu}{N-p-\mu}\right)  }}{\left(  \int\limits_{%
\mathbb{R}
^{N}}\left\vert \nabla u\right\vert ^{p}dx\right)  ^{1/p}}.
\]
Noting that
\[
p^{\ast}\left(  \frac{s\left(  N-p\right)  -N\mu}{N-p-\mu}\right)
=\frac{N-\frac{s\left(  N-p\right)  -N\mu}{N-p-\mu}}{N-p}p=\frac{\left(
N-s\right)  p}{N-\mu-p}%
\]
and
\[
0\leq\frac{s\left(  N-p\right)  -N\mu}{N-p-\mu}<p.
\]

\begin{lemma}
\label{l3.1}$CKN\left(  N,p,\mu,s\right)  $ can be attained

\begin{proof}
We will use the fact that $HS\left(  N,p,\mu,s\right)  $ is attained by some
radial functions $U_{0}$. Set $V_{0}=D_{N,d,p}^{-1}U_{0}$ with $d=\frac
{N-p}{N-p-\mu}$. It means that $U_{0}=D_{N,d,p}V_{0}$. We will show that
$V_{0}$ is a maximizer of $CKN\left(  N,p,\mu,s\right)  .$ Indeed, for any
$v\in D_{\mu}^{1,p}\left(
\mathbb{R}
^{N}\right)  $, we set $u=D_{N,d,p}v$. Then by Lemma \ref{l1}, we get%
\[
\int\limits_{%
\mathbb{R}
^{N}}\left\vert \nabla v\right\vert ^{p}\frac{dx}{\left\vert x\right\vert
^{\mu}}\geq\int\limits_{%
\mathbb{R}
^{N}}\frac{|\nabla D_{N,d,p}v|^{p}}{|x|^{d\left(  p+\mu-N\right)  +N-p}%
}dx=\int\limits_{%
\mathbb{R}
^{N}}|\nabla u|^{p}dx
\]
\begin{align*}
\int\limits_{%
\mathbb{R}
^{N}}\left\vert v\right\vert ^{\frac{\left(  N-s\right)  p}{N-\mu-p}}\frac
{dx}{\left\vert x\right\vert ^{s}}  &  =dd^{\frac{p-1}{p}\frac{\left(
N-s\right)  p}{N-\mu-p}}\int\limits_{%
\mathbb{R}
^{N}}\left\vert u\right\vert ^{\frac{\left(  N-s\right)  p}{N-\mu-p}}\frac
{dx}{\left\vert x\right\vert ^{N+sd-Nd}}\\
&  =d^{1+\frac{\left(  N-s\right)  \left(  p-1\right)  }{N-\mu-p}}%
\int\limits_{%
\mathbb{R}
^{N}}\left\vert u\right\vert ^{\frac{\left(  N-s\right)  p}{N-\mu-p}}\frac
{dx}{\left\vert x\right\vert ^{\frac{s\left(  N-p\right)  -N\mu}{N-p-\mu}}}.
\end{align*}%
\[
\int\limits_{%
\mathbb{R}
^{N}}\left\vert \nabla V_{0}\right\vert ^{p}\frac{dx}{\left\vert x\right\vert
^{\mu}}=\int\limits_{%
\mathbb{R}
^{N}}|\nabla U_{0}|^{p}dx
\]
and%
\[
\int\limits_{%
\mathbb{R}
^{N}}\left\vert V_{0}\right\vert ^{\frac{\left(  N-s\right)  p}{N-\mu-p}}%
\frac{dx}{\left\vert x\right\vert ^{s}}=d^{1+\frac{\left(  N-s\right)  \left(
p-1\right)  }{N-\mu-p}}\int\limits_{%
\mathbb{R}
^{N}}\left\vert U_{0}\right\vert ^{\frac{\left(  N-s\right)  p}{N-\mu-p}}%
\frac{dx}{\left\vert x\right\vert ^{\frac{s\left(  N-p\right)  -N\mu}{N-p-\mu
}}}.
\]
Hence
\begin{align*}
\frac{\left(  \int\limits_{%
\mathbb{R}
^{N}}\left\vert v\right\vert ^{\frac{\left(  N-s\right)  p}{N-\mu-p}}\frac
{dx}{\left\vert x\right\vert ^{s}}\right)  ^{\frac{N-\mu-p}{\left(
N-s\right)  p}}}{\left(  \int\limits_{%
\mathbb{R}
^{N}}\left\vert \nabla v\right\vert ^{p}\frac{dx}{\left\vert x\right\vert
^{\mu}}\right)  ^{1/p}}  &  \leq d^{\left(  1+\frac{\left(  N-s\right)
\left(  p-1\right)  }{N-\mu-p}\right)  \frac{N-\mu-p}{\left(  N-s\right)  p}%
}\frac{\left(  \int\limits_{%
\mathbb{R}
^{N}}\left\vert u\right\vert ^{\frac{\left(  N-s\right)  p}{N-\mu-p}}\frac
{dx}{\left\vert x\right\vert ^{\frac{s\left(  N-p\right)  -N\mu}{N-p-\mu}}%
}\right)  ^{\frac{N-\mu-p}{\left(  N-s\right)  p}}}{\left(  \int\limits_{%
\mathbb{R}
^{N}}|\nabla u|^{p}dx\right)  ^{1/p}}\\
&  \leq d^{\left(  1+\frac{\left(  N-s\right)  \left(  p-1\right)  }{N-\mu
-p}\right)  \frac{N-\mu-p}{\left(  N-s\right)  p}}\frac{\left(  \int\limits_{%
\mathbb{R}
^{N}}\left\vert U_{0}\right\vert ^{\frac{\left(  N-s\right)  p}{N-\mu-p}}%
\frac{dx}{\left\vert x\right\vert ^{\frac{s\left(  N-p\right)  -N\mu}{N-p-\mu
}}}\right)  ^{\frac{N-\mu-p}{\left(  N-s\right)  p}}}{\left(  \int\limits_{%
\mathbb{R}
^{N}}|\nabla U_{0}|^{p}dx\right)  ^{1/p}}\\
&  =\frac{\left(  \int\limits_{%
\mathbb{R}
^{N}}\left\vert V_{0}\right\vert ^{\frac{\left(  N-s\right)  p}{N-\mu-p}}%
\frac{dx}{\left\vert x\right\vert ^{s}}\right)  ^{\frac{N-\mu-p}{\left(
N-s\right)  p}}}{\left(  \int\limits_{%
\mathbb{R}
^{N}}\left\vert \nabla V_{0}\right\vert ^{p}\frac{dx}{\left\vert x\right\vert
^{\mu}}\right)  ^{1/p}}.
\end{align*}
In other words, $V_{0}$ is the maximizer for $CKN\left(  N,p,\mu,s\right)  .$
Moreover, we also deduce that%
\[
CKN\left(  N,p,\mu,s\right)  =d^{\left(  1+\frac{\left(  N-s\right)  \left(
p-1\right)  }{N-\mu-p}\right)  \frac{N-\mu-p}{\left(  N-s\right)  p}}HS\left(
N,p,\mu,s\right)  .
\]

\end{proof}
\end{lemma}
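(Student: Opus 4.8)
The plan is to transfer the extremal problem for $CKN(N,p,\mu,s)$ onto an ordinary Hardy--Sobolev inequality by means of the Kelvin-type map $D_{N,d,p}$ of Section~2, evaluated at the distinguished exponent
\[
d=\frac{N-p}{N-p-\mu}>1
\]
($d>1$ since $0<\mu<N-p$ under (C3)). This value of $d$ is chosen precisely so that the gradient weight produced by Lemma~\ref{l1}(2), namely $|x|^{d(p+\mu-N)+N-p}$, reduces to $|x|^{0}$, i.e.\ the transformed Dirichlet integral is unweighted. Under the same change of variables the singular weight in the $L^{r}$-norm becomes $|x|^{-\sigma}$ with $\sigma:=N+sd-Nd=\frac{s(N-p)-N\mu}{N-p-\mu}$, and the restrictions in (C3), equivalently $\frac{N\mu}{N-p}\le s<p+\mu$, are exactly what guarantee $0\le\sigma<p$; moreover $p^{\ast}(\sigma)=\frac{(N-s)p}{N-\mu-p}=r$. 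Thus after the transform the $CKN$ quotient becomes a fixed power of $d$ times the $HS$ quotient defined above.

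First I would quote the needed input. Since $0\le\sigma<p<N$ and $p^{\ast}(\sigma)=r$, the Hardy--Sobolev constant $HS(N,p,\mu,s)$ is attained, and --- by Schwarz rearrangement, which is legitimate here precisely because the Dirichlet integral carries no weight --- it is attained by a nonnegative radial function $U_{0}$, explicitly $U_{0}(x)=c\bigl(\lambda+|x|^{\frac{p-\sigma}{p-1}}\bigr)^{-\frac{N-p}{p-\sigma}}$ (the Aubin--Talenti extremal when $\sigma=0$, the Ghoussoub--Yuan extremal when $0<\sigma<p$, as recalled in the Introduction).

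Next, set $V_{0}:=D_{N,d,p}^{-1}U_{0}$, so that $U_{0}=D_{N,d,p}V_{0}$. Given an arbitrary $v\in D_{\mu}^{1,p}(\mathbb{R}^{N};|x|^{-s}dx)$, put $u:=D_{N,d,p}v$; by Lemma~\ref{l1}(1)--(2) we have $u\in D_{0}^{1,p}(\mathbb{R}^{N};|x|^{-\sigma}dx)$, together with $\int_{\mathbb{R}^{N}}|\nabla v|^{p}|x|^{-\mu}\,dx\ge\int_{\mathbb{R}^{N}}|\nabla u|^{p}\,dx$ (equality iff $v$, equivalently $u$, is radial) and $\int_{\mathbb{R}^{N}}|v|^{r}|x|^{-s}\,dx=d^{\,1+\frac{(N-s)(p-1)}{N-\mu-p}}\int_{\mathbb{R}^{N}}|u|^{r}|x|^{-\sigma}\,dx$. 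Applying the very same two relations to $V_{0}$ and $U_{0}$ --- where now the gradient relation is an \emph{equality} because $U_{0}$, hence $V_{0}$, is radial --- and writing $\kappa:=\bigl(1+\tfrac{(N-s)(p-1)}{N-\mu-p}\bigr)\tfrac{N-\mu-p}{(N-s)p}$, one obtains
\begin{align*}
\frac{\bigl(\int_{\mathbb{R}^{N}}|v|^{r}|x|^{-s}\,dx\bigr)^{1/r}}{\bigl(\int_{\mathbb{R}^{N}}|\nabla v|^{p}|x|^{-\mu}\,dx\bigr)^{1/p}}
&\le d^{\kappa}\,\frac{\bigl(\int_{\mathbb{R}^{N}}|u|^{r}|x|^{-\sigma}\,dx\bigr)^{1/r}}{\bigl(\int_{\mathbb{R}^{N}}|\nabla u|^{p}\,dx\bigr)^{1/p}}\\
&\le d^{\kappa}\,HS(N,p,\mu,s)
=\frac{\bigl(\int_{\mathbb{R}^{N}}|V_{0}|^{r}|x|^{-s}\,dx\bigr)^{1/r}}{\bigl(\int_{\mathbb{R}^{N}}|\nabla V_{0}|^{p}|x|^{-\mu}\,dx\bigr)^{1/p}}.
\end{align*}
Passing to the supremum over $v$ shows that $V_{0}$ attains $CKN(N,p,\mu,s)$, and in fact that $CKN(N,p,\mu,s)=d^{\kappa}\,HS(N,p,\mu,s)$.

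The only genuine obstacle is the first step, the attainability of $HS(N,p,\mu,s)$ by a radial extremal; since $0\le\sigma<p$ this is classical (Aubin--Talenti for $\sigma=0$, Ghoussoub--Yuan and its $p$-Laplacian refinements for $0<\sigma<p$) and I would simply cite it. Everything else is the bookkeeping already packaged in Lemma~\ref{l1}: the choice $d=\frac{N-p}{N-p-\mu}$ that annihilates the gradient weight, the identity $\sigma=N+sd-Nd\in[0,p)$ under (C3), the matching $p^{\ast}(\sigma)=r$, and the fact that $D_{N,d,p}$ sends admissible competitors for $CKN(N,p,\mu,s)$ to admissible competitors for $HS(N,p,\mu,s)$ while $D_{N,d,p}^{-1}$ sends the $HS$-extremal to an admissible competitor for $CKN(N,p,\mu,s)$, so that no maximising configuration is lost in the transfer.
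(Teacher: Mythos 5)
Your proposal is correct and follows essentially the same route as the paper's proof: the same transform $D_{N,d,p}$ with $d=\frac{N-p}{N-p-\mu}$ chosen to remove the gradient weight, the same use of Lemma \ref{l1} to convert the $CKN$ quotient into $d^{\kappa}$ times the Hardy--Sobolev quotient, and the same chain of inequalities (with equality for the radial pair $U_{0}$, $V_{0}$) yielding both attainment and the relation $CKN(N,p,\mu,s)=d^{\kappa}HS(N,p,\mu,s)$.
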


\begin{lemma}
\label{l3.2}All the optimizers for $CKN\left(  N,p,\mu,s\right)  $ are
radially symmetric.

\begin{proof}
Assume that $V_{0}$ is a maximizer for $CKN\left(  N,p,\mu,s\right)  $ and
$U_{0}=D_{N,d,p}V_{0}$ where $d=\frac{N-p}{N-p-\mu}$. Again by Lemma \ref{l1},
we get%
\[
\int\limits_{%
\mathbb{R}
^{N}}\left\vert \nabla V_{0}\right\vert ^{p}\frac{dx}{\left\vert x\right\vert
^{\mu}}\geq\int\limits_{%
\mathbb{R}
^{N}}|\nabla U_{0}|^{p}dx
\]
and%
\[
\int\limits_{%
\mathbb{R}
^{N}}\left\vert V_{0}\right\vert ^{\frac{\left(  N-s\right)  p}{N-\mu-p}}%
\frac{dx}{\left\vert x\right\vert ^{s}}=d^{1+\frac{\left(  N-s\right)  \left(
p-1\right)  }{N-\mu-p}}\int\limits_{%
\mathbb{R}
^{N}}\left\vert U_{0}\right\vert ^{\frac{\left(  N-s\right)  p}{N-\mu-p}}%
\frac{dx}{\left\vert x\right\vert ^{\frac{s\left(  N-p\right)  -N\mu}{N-p-\mu
}}}.
\]
We will now prove that $U_{0}$ is a maximizer for $HS\left(  N,p,\mu,s\right)
$. Indeed, for any radial function $u$ (we can assume $u$ is radial by the
Schwarz rearrangement argument), and set $v=D_{N,d,p}^{-1}u$, that is
$u=D_{N,d,p}v$, we obtain by Lemma \ref{l1}:%
\[
\int\limits_{%
\mathbb{R}
^{N}}\left\vert \nabla v\right\vert ^{p}\frac{dx}{\left\vert x\right\vert
^{\mu}}=\int\limits_{%
\mathbb{R}
^{N}}|\nabla u|^{p}dx
\]
and%
\[
\int\limits_{%
\mathbb{R}
^{N}}\left\vert v\right\vert ^{\frac{\left(  N-s\right)  p}{N-\mu-p}}\frac
{dx}{\left\vert x\right\vert ^{s}}=d^{1+\frac{\left(  N-s\right)  \left(
p-1\right)  }{N-\mu-p}}\int\limits_{%
\mathbb{R}
^{N}}\left\vert u\right\vert ^{\frac{\left(  N-s\right)  p}{N-\mu-p}}\frac
{dx}{\left\vert x\right\vert ^{\frac{s\left(  N-p\right)  -N\mu}{N-p-\mu}}}.
\]
Hence,
\begin{align*}
\frac{\left(  \int\limits_{%
\mathbb{R}
^{N}}\left\vert U_{0}\right\vert ^{\frac{\left(  N-s\right)  p}{N-\mu-p}}%
\frac{dx}{\left\vert x\right\vert ^{\frac{s\left(  N-p\right)  -N\mu}{N-p-\mu
}}}\right)  ^{\frac{N-\mu-p}{\left(  N-s\right)  p}}}{\left(  \int\limits_{%
\mathbb{R}
^{N}}|\nabla U_{0}|^{p}dx\right)  ^{1/p}}  &  \geq\left(  \frac{1}{d}\right)
^{\left(  1+\frac{\left(  N-s\right)  \left(  p-1\right)  }{N-\mu-p}\right)
\frac{N-\mu-p}{\left(  N-s\right)  p}}\frac{\left(  \int\limits_{%
\mathbb{R}
^{N}}\left\vert V_{0}\right\vert ^{\frac{\left(  N-s\right)  p}{N-\mu-p}}%
\frac{dx}{\left\vert x\right\vert ^{s}}\right)  ^{\frac{N-\mu-p}{\left(
N-s\right)  p}}}{\left(  \int\limits_{%
\mathbb{R}
^{N}}\left\vert \nabla V_{0}\right\vert ^{p}\frac{dx}{\left\vert x\right\vert
^{\mu}}\right)  ^{1/p}}\\
&  \geq\left(  \frac{1}{d}\right)  ^{\left(  1+\frac{\left(  N-s\right)
\left(  p-1\right)  }{N-\mu-p}\right)  \frac{N-\mu-p}{\left(  N-s\right)  p}%
}\frac{\left(  \int\limits_{%
\mathbb{R}
^{N}}\left\vert v\right\vert ^{\frac{\left(  N-s\right)  p}{N-\mu-p}}\frac
{dx}{\left\vert x\right\vert ^{s}}\right)  ^{\frac{N-\mu-p}{\left(
N-s\right)  p}}}{\left(  \int\limits_{%
\mathbb{R}
^{N}}\left\vert \nabla v\right\vert ^{p}\frac{dx}{\left\vert x\right\vert
^{\mu}}\right)  ^{1/p}}\\
&  =\frac{\left(  \int\limits_{%
\mathbb{R}
^{N}}\left\vert u\right\vert ^{\frac{\left(  N-s\right)  p}{N-\mu-p}}\frac
{dx}{\left\vert x\right\vert ^{\frac{s\left(  N-p\right)  -N\mu}{N-p-\mu}}%
}\right)  ^{\frac{N-\mu-p}{\left(  N-s\right)  p}}}{\left(  \int\limits_{%
\mathbb{R}
^{N}}|\nabla u|^{p}dx\right)  ^{1/p}}.
\end{align*}
Hence $U_{0}$ is a maximizer for $HS\left(  N,p,\mu,s\right)  $. Moreover, it
is easy to see that the equality must happen in the first line, that is%
\[
\frac{\left(  \int\limits_{%
\mathbb{R}
^{N}}\left\vert U_{0}\right\vert ^{\frac{\left(  N-s\right)  p}{N-\mu-p}}%
\frac{dx}{\left\vert x\right\vert ^{\frac{s\left(  N-p\right)  -N\mu}{N-p-\mu
}}}\right)  ^{\frac{N-\mu-p}{\left(  N-s\right)  p}}}{\left(  \int\limits_{%
\mathbb{R}
^{N}}|\nabla U_{0}|^{p}dx\right)  ^{1/p}}=\left(  \frac{1}{d}\right)
^{\left(  1+\frac{\left(  N-s\right)  \left(  p-1\right)  }{N-\mu-p}\right)
\frac{N-\mu-p}{\left(  N-s\right)  p}}\frac{\left(  \int\limits_{%
\mathbb{R}
^{N}}\left\vert V_{0}\right\vert ^{\frac{\left(  N-s\right)  p}{N-\mu-p}}%
\frac{dx}{\left\vert x\right\vert ^{s}}\right)  ^{\frac{N-\mu-p}{\left(
N-s\right)  p}}}{\left(  \int\limits_{%
\mathbb{R}
^{N}}\left\vert \nabla V_{0}\right\vert ^{p}\frac{dx}{\left\vert x\right\vert
^{\mu}}\right)  ^{1/p}}.
\]
It means that
\[
\int\limits_{%
\mathbb{R}
^{N}}\left\vert \nabla V_{0}\right\vert ^{p}\frac{dx}{\left\vert x\right\vert
^{\mu}}=\int\limits_{%
\mathbb{R}
^{N}}|\nabla U_{0}|^{p}dx
\]
and thus, $V_{0}$ is radial.
\end{proof}
\end{lemma}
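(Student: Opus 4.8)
The plan is to transport a putative extremal $V_0$ of $CKN(N,p,\mu,s)$ to the Hardy--Sobolev problem $HS(N,p,\mu,s)$ through the Kelvin-type transform $D_{N,d,p}$ with $d=\frac{N-p}{N-p-\mu}$, exactly as in Lemma \ref{l3.1}, and then extract radial symmetry from the equality case of Lemma \ref{l1}(2). The reason this value of $d$ is the right one is that $d(p+\mu-N)+N-p=0$, so $D_{N,d,p}$ turns the weighted Dirichlet integral $\int|\nabla u|^{p}|x|^{-\mu}\,dx$ into an unweighted one, and simultaneously, by Lemma \ref{l1}(1) applied with $f(t)=|t|^{(N-s)p/(N-\mu-p)}$, it converts $\int|u|^{(N-s)p/(N-\mu-p)}|x|^{-s}\,dx$ into a fixed positive multiple $d^{\kappa'}$ of $\int|u|^{(N-s)p/(N-\mu-p)}|x|^{-\tilde s}\,dx$, where $\tilde s=\frac{s(N-p)-N\mu}{N-p-\mu}\in[0,p)$ (the range of $\tilde s$ being guaranteed by $\frac{\mu}{p}\le\frac{s}{r}<\frac{\mu}{p}+1$).

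First I would set $U_0=D_{N,d,p}V_0$ and combine these two transformation formulas. Raising the numerator identity to the power $\frac{N-\mu-p}{(N-s)p}$ shows that the numerator of the $CKN$ quotient of $V_0$ equals $d^{\kappa}$ times the numerator of the $HS$ quotient of $U_0$, with $\kappa=\bigl(1+\tfrac{(N-s)(p-1)}{N-\mu-p}\bigr)\tfrac{N-\mu-p}{(N-s)p}$ the exponent from Lemma \ref{l3.1}; meanwhile, by Lemma \ref{l1}(2), $\int|\nabla V_0|^{p}|x|^{-\mu}\,dx\ge\int|\nabla U_0|^{p}\,dx$, with equality if and only if $V_0$ is radial. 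Hence the $CKN$ quotient of $V_0$ is at most $d^{\kappa}$ times the $HS$ quotient of $U_0$, which is at most $d^{\kappa}\,HS(N,p,\mu,s)$. Since $V_0$ is an extremal, the left-hand side is $CKN(N,p,\mu,s)$, and $CKN(N,p,\mu,s)=d^{\kappa}HS(N,p,\mu,s)$ by Lemma \ref{l3.1}; therefore every inequality in this chain is forced to be an equality. In particular $\int|\nabla V_0|^{p}|x|^{-\mu}\,dx=\int|\nabla U_0|^{p}\,dx$, and the equality statement of Lemma \ref{l1}(2) (equivalently Lemma \ref{l2.1}) yields that $V_0$ is radially symmetric.

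Alternatively, in the same spirit as Lemma \ref{l4.4}, one may run the argument symmetrically: first show that $U_0$ attains $HS(N,p,\mu,s)$ by transporting an arbitrary radial Hardy--Sobolev competitor $u$ back through $v=D_{N,d,p}^{-1}u$ (for which Lemma \ref{l1}(2) holds with equality because $u$ is radial), comparing quotients, and invoking that $V_0$ maximizes $CKN$; then the equality $\int|\nabla V_0|^{p}|x|^{-\mu}\,dx=\int|\nabla U_0|^{p}\,dx$ again drops out and gives radiality. Either way, the only genuinely non-formal ingredient is Lemma \ref{l3.1} itself --- that $CKN(N,p,\mu,s)$ is attained and equals $d^{\kappa}HS(N,p,\mu,s)$ --- which rests on the classical attainability of $HS(N,p,\mu,s)$ by the explicit optimizers \eqref{HSE} (legitimate precisely because $0\le\tilde s<p$). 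Given that, the present lemma is bookkeeping, and the one point requiring care is to verify that the power of $d$ multiplying the numerator is exactly the one that makes the chain of inequalities close, so that equality in the Dirichlet-integral estimate --- the sole carrier of symmetry information --- becomes unavoidable.
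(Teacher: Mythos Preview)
Your proposal is correct and follows essentially the same approach as the paper: your ``alternative'' route---transporting radial Hardy--Sobolev competitors back via $D_{N,d,p}^{-1}$ to show $U_0$ attains $HS(N,p,\mu,s)$ and then forcing equality in the gradient estimate of Lemma~\ref{l1}(2)---is exactly the paper's argument. Your first, more direct route (invoking $CKN=d^{\kappa}HS$ from Lemma~\ref{l3.1} to close the inequality chain without separately proving $U_0$ is an $HS$ maximizer) is a mild streamlining of the same idea and is also fine.
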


\begin{proof}
[Proof of Theorem \ref{T1}]From Lemma \ref{l3.1} and Lemma \ref{l3.2}, we see
that $CKN\left(  N,p,\mu,s\right)  $ is attained,%
\[
CKN\left(  N,p,\mu,s\right)  =d^{\left(  1+\frac{\left(  N-s\right)  \left(
p-1\right)  }{N-\mu-p}\right)  \frac{N-\mu-p}{\left(  N-s\right)  p}}HS\left(
N,p,\mu,s\right)  ,
\]
and all maximizers for $CKN\left(  N,p,\mu,s\right)  $ are radially symmetric.
Futhermore, we can conclude that $V_{0}$ is a maximizer for $CKN\left(
N,p,\mu,s\right)  $ only if $U_{0}=D_{N,d,p}V_{0}$ is a maximizer for
$HS\left(  N,p,\mu,s\right)  $ where $d=\frac{N-p}{N-p-\mu}$. It is known (see
\cite{GM} for instance) that $HS\left(  N,p,\mu,s\right)  $ is attained with
the maximizers being the functions
\begin{align*}
U_{c,\lambda}\left(  x\right)   &  =c\left(  \lambda+\left\vert x\right\vert
^{\frac{p-\frac{s\left(  N-p\right)  -N\mu}{N-p-\mu}}{p-1}}\right)
^{-\frac{N-p}{p-\frac{s\left(  N-p\right)  -N\mu}{N-p-\mu}}}\\
&  =c\left(  \lambda+\left\vert x\right\vert ^{\frac{\left(  N-p\right)
\left(  p+\mu-s\right)  }{\left(  p-1\right)  \left(  N-p-\mu\right)  }%
}\right)  ^{-\frac{N-p-\mu}{p+\mu-s}}\text{ }%
\end{align*}
for some $c\neq0$, $\lambda>0.$ Hence $CKN\left(  N,p,\mu,s\right)  $ could be
achieved with the optimizers being the functions%
\begin{align*}
V_{c,\lambda}\left(  x\right)   &  =D_{N,d,p}^{-1}U_{c,\lambda}\left(
x\right) \\
&  =c\left(  \lambda+\left\vert x\right\vert ^{\frac{p+\mu-s}{p-1}}\right)
^{-\frac{N-p-\mu}{p+\mu-s}}%
\end{align*}
for some $c\neq0$, $\lambda>0.$
\end{proof}

\begin{remark}
If we have $\frac{s}{r}=\frac{\mu}{p}+1$ in the condition (C2), then $s=p+\mu$
and $\frac{s\left(  N-p\right)  -N\mu}{N-p-\mu}=p$. So in this case, after
applying the transform $D_{N,d,p}$ where $d=\frac{N-p}{N-p-\mu}$, the CKN
inequality corresponds to the Hardy inequality. Hence, the best constant in
this case is
\[
CKN\left(  N,p,\mu,s\right)  =\frac{p}{N-p-\mu},
\]
and it is never achieved.
\end{remark}

\section{Caffarelli-Kohn Nirenberg inequalities with arbitrary norm}

In this section, we will investigate the CKN under arbitrary norms in $%
\mathbb{R}
^{N}$ in the spirit of Cordero-Erausquin, Nazaret and Villani \cite{CeNV}.
More precisely, let $E=\left(
\mathbb{R}
^{N},\left\Vert \cdot\right\Vert \right)  $, where $\left\Vert \cdot
\right\Vert $ is an arbitrary norm on $%
\mathbb{R}
^{N}$. Then its dual space $E^{\ast}=\left(
\mathbb{R}
^{N},\left\Vert \cdot\right\Vert _{\ast}\right)  $ where for $X\in E^{\ast}:$%
\[
\left\Vert X\right\Vert _{\ast}=\sup_{Y\in E:\left\Vert Y\right\Vert \leq
1}X\cdot Y.
\]
For simplicity, we will assume that $\left\vert \left\{  \left\Vert
x\right\Vert _{\ast}\leq1\right\}  \right\vert =\omega_{N}$ and denote
$\kappa_{N}=\left\vert \left\{  \left\Vert x\right\Vert \leq1\right\}
\right\vert .$ We will asume that for any $X\in%
\mathbb{R}
^{N}$, there exists a unique $X^{\ast}\in%
\mathbb{R}
^{N}$ such that $\left\Vert X^{\ast}\right\Vert _{\ast}=1$ and
\[
X\cdot X^{\ast}=\left\Vert X\right\Vert =\sup_{Y\in%
\mathbb{R}
^{N}:\left\Vert Y\right\Vert _{\ast}\leq1}X\cdot Y.
\]
It is clear that $\left\Vert \cdot\right\Vert $ is Lipschitz with the
Lipschitz constant $1$, and thus, differentiable a.e. From the property that%
\[
\left\Vert \lambda x\right\Vert =\lambda\left\Vert x\right\Vert \text{ for all
}\lambda>0\text{,}%
\]
we can see that the gradient of $\left\Vert \cdot\right\Vert $ at $x\in%
\mathbb{R}
^{N}$ is the unique vector $\nabla\left(  \left\Vert \cdot\right\Vert \right)
\left(  x\right)  =x^{\ast}.$ Recall that
\[
\left\Vert x^{\ast}\right\Vert _{\ast}=1\text{, }x\cdot x^{\ast}=\left\Vert
x\right\Vert =\sup_{\left\Vert y\right\Vert _{\ast}\leq1}x\cdot y.
\]
Actually, at first, we will consider a more general situation. More precisely,
we suppose that $C$ is $q-$homogeneous, that is there exists $q>1$ such that%
\begin{equation}
C\left(  \lambda x\right)  =\lambda^{q}C\left(  x\right)  \text{ }%
\forall\lambda\geq0,\text{ }\forall x\in%
\mathbb{R}
^{N}. \label{2.10}%
\end{equation}
Then $C^{\ast}$, the Legendre transform of $C$, defined by
\[
C^{\ast}\left(  x\right)  =\sup_{y}\left\{  \left\langle x,y\right\rangle
-C\left(  y\right)  \right\}  ,
\]
is even, strictly convex function and is $p-$homogeneous with $p=\frac{q}%
{q-1}$.

We have that $\left\langle X,Y\right\rangle \leq C^{\ast}\left(  X\right)
+C(Y)$ for all $X,Y$. Hence $\left\langle X,Y\right\rangle \leq\lambda
^{p}C^{\ast}\left(  X\right)  +\lambda^{-q}C(Y)$ for all $\lambda>0$, $X,Y.$
Minimizing the right hand side with respect to $\lambda$ gives the
Cauchy-Schwarz inequality
\[
X\cdot Y\leq\left[  qC\left(  Y\right)  \right]  ^{\frac{1}{q}}\left[
pC^{\ast}\left(  X\right)  \right]  ^{\frac{1}{p}}.
\]
By Young's inequality, we have%
\[
X\cdot Y\leq\left[  qC\left(  Y\right)  \right]  ^{\frac{1}{q}}\left[
pC^{\ast}\left(  X\right)  \right]  ^{\frac{1}{p}}\leq C^{\ast}\left(
x\right)  +C(y).
\]
Hence, we also have that%
\[
\left[  pC^{\ast}\left(  X\right)  \right]  ^{\frac{1}{p}}=\sup_{Y}%
\frac{X\cdot Y}{\left[  qC\left(  Y\right)  \right]  ^{\frac{1}{q}}}.
\]
In other words,%
\[
C^{\ast}\left(  X\right)  =\sup_{Y}\frac{\left\vert X\cdot Y\right\vert ^{p}%
}{p\left[  qC\left(  Y\right)  \right]  ^{\frac{p}{q}}}%
\]
We will assume that for all $x\in%
\mathbb{R}
^{N}$, there exists a unique vector $x^{\ast}$ such that
\[
x\cdot x^{\ast}=qC\left(  x\right)  \text{ and }C^{\ast}\left(  x^{\ast
}\right)  =\left(  q-1\right)  C\left(  x\right)  =\frac{q}{p}C\left(
x\right)  .
\]
In other words, for all $x\in%
\mathbb{R}
^{N},$ there exists a unique vector $x^{\ast}$ such that the equality in the
Cauchy-Schwarz inequality happens.

Noting that from (\ref{2.10}), we get that $C\left(  \cdot\right)  $ is
differentiable a.e. We will assume that the gradient of $C\left(
\cdot\right)  $ at $x\in%
\mathbb{R}
^{N}$ is the unique vector $x^{\ast}$. The example that we have in mind are
$C\left(  x\right)  =\frac{1}{q}\left\vert x\right\vert ^{q}$ and $C^{\ast
}\left(  x\right)  =\frac{1}{p}\left\vert x\right\vert ^{p}$ with $\left\vert
\cdot\right\vert $ is the regular Euclidean norm on $%
\mathbb{R}
^{N}$. Another example is the pair $C\left(  x\right)  =\frac{1}{q}\left\Vert
x\right\Vert ^{q}$ and $C^{\ast}\left(  x\right)  =\frac{1}{p}\left\Vert
x\right\Vert _{\ast}^{p}.$

\subsection{A change of variables}

Similarly as Lemma \ref{l2.1}, we have

\begin{lemma}
\label{l5.0}We have
\[
\left\vert x\cdot\nabla u(x)\right\vert =\left[  qC\left(  x\right)  \right]
^{\frac{1}{q}}\left[  pC^{\ast}\left(  \nabla u\right)  \right]  ^{\frac{1}%
{p}}\text{ for a.e}.x\in%
\mathbb{R}
^{N}\text{,}%
\]
if and only if $u$ is $C-$radial, i.e. $u\left(  x\right)  =u\left(  y\right)
$ when $C\left(  x\right)  =C\left(  y\right)  $.

\begin{proof}
If $u$ is $C-$radial, then recalling that $\nabla\left(  C\left(
\cdot\right)  \right)  \left(  x\right)  =x^{\ast},$ we have
\[
\frac{\partial u}{\partial x_{j}}(x)=u^{\prime}\left(  C\left(  x\right)
\right)  x_{j}^{\ast}.
\]
Hence,%
\[
C^{\ast}\left(  \nabla u\right)  =C^{\ast}\left(  u^{\prime}\left(  C\left(
x\right)  \right)  x^{\ast}\right)  =\left\vert u^{\prime}\left(  C\left(
x\right)  \right)  \right\vert ^{p}C^{\ast}\left(  x^{\ast}\right)
=\left\vert u^{\prime}\left(  C\left(  x\right)  \right)  \right\vert
^{p}\frac{q}{p}C\left(  x\right)
\]
and%
\[
\left[  qC\left(  x\right)  \right]  ^{\frac{1}{q}}\left[  pC^{\ast}\left(
\nabla u\right)  \right]  ^{\frac{1}{p}}=\left[  qC\left(  x\right)  \right]
^{\frac{1}{q}}\left[  \left\vert u^{\prime}\left(  C\left(  x\right)  \right)
\right\vert ^{p}qC\left(  x\right)  \right]  ^{\frac{1}{p}}=\left\vert
u^{\prime}\left(  C\left(  x\right)  \right)  \right\vert qC\left(  x\right)
.
\]
Also,
\[
\left\vert x\cdot\nabla u(x)\right\vert =\left\vert \sum_{j=1}^{N}x_{j}%
\frac{\partial u}{\partial x_{j}}(x)\right\vert =\left\vert u^{\prime}\left(
\left\Vert x\right\Vert \right)  \right\vert \left\vert \sum_{j=1}^{N}%
x_{j}x_{j}^{\ast}\right\vert =\left\vert u^{\prime}\left(  \left\Vert
x\right\Vert \right)  \right\vert qC\left(  x\right)  .
\]
Now, if for all $x\in%
\mathbb{R}
^{N}:$
\[
\left\vert x\cdot\nabla u(x)\right\vert =\left[  qC\left(  x\right)  \right]
^{\frac{1}{q}}\left[  pC^{\ast}\left(  \nabla u\right)  \right]  ^{\frac{1}%
{p}},
\]
then $\nabla u(x)$ has the same direction with $x^{\ast}$. That is we can find
a function $f\left(  x\right)  $ such that $\nabla u(x)=f\left(  x\right)
x^{\ast}$. Now let $a$ and $b$ be two points on the $C-$sphere with radius
$r>0$. That is $C\left(  a\right)  =C\left(  b\right)  =r$. We connect $a$ and
$b$ by a piecewise smooth curve $r\left(  t\right)  $ on the sphere, i.e.
$C\left(  r\left(  t\right)  \right)  =r$ and $C\left(  r\left(  0\right)
\right)  =a$, $C\left(  r\left(  1\right)  \right)  =b$. Then we have%
\[
\nabla u(r\left(  t\right)  )=f\left(  r\left(  t\right)  \right)  \left(
r\left(  t\right)  \right)  ^{\ast}.
\]
Using that fact that $C\left(  r\left(  t\right)  \right)  =r$ for all $t$, we
get%
\[
\left(  r\left(  t\right)  \right)  ^{\ast}\cdot\nabla r\left(  t\right)
=0\text{.}%
\]
Hence%
\[%
{\displaystyle\int\limits_{0}^{1}}
\nabla u(r\left(  t\right)  )\cdot\nabla r\left(  t\right)  dt=%
{\displaystyle\int\limits_{0}^{1}}
f\left(  r\left(  t\right)  \right)  \left(  r\left(  t\right)  \right)
^{\ast}\cdot\nabla r\left(  t\right)  dt=0.
\]
In other words,
\[
u\left(  b\right)  -u\left(  a\right)  =u\left(  C\left(  r\left(  1\right)
\right)  \right)  -u\left(  C\left(  r\left(  0\right)  \right)  \right)
=0\text{.}%
\]

\end{proof}
\end{lemma}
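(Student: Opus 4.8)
The plan is to transcribe the argument of Lemma~\ref{l2.1}, replacing the Euclidean radial structure by the $C$-radial one and using the three structural facts recorded just above the statement: $\nabla C(x)=x^{\ast}$, $x\cdot x^{\ast}=qC(x)$, and $C^{\ast}(x^{\ast})=\tfrac{q}{p}C(x)$, together with the evenness and $p$-homogeneity of $C^{\ast}$ and the sharp Cauchy--Schwarz inequality $|x\cdot\nabla u(x)|\le[qC(x)]^{1/q}[pC^{\ast}(\nabla u(x))]^{1/p}$ established there.

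For the ``only if'' implication I would suppose $u$ is $C$-radial, say $u(x)=h(C(x))$ for a function $h$ of one real variable, so that by the chain rule $\nabla u(x)=h'(C(x))\,x^{\ast}$ for a.e.\ $x$. Then $p$-homogeneity and evenness of $C^{\ast}$ combined with $C^{\ast}(x^{\ast})=\tfrac{q}{p}C(x)$ give $pC^{\ast}(\nabla u(x))=|h'(C(x))|^{p}\,qC(x)$, hence $[pC^{\ast}(\nabla u(x))]^{1/p}=|h'(C(x))|\,[qC(x)]^{1/p}$; on the other hand $x\cdot\nabla u(x)=h'(C(x))\,(x\cdot x^{\ast})=h'(C(x))\,qC(x)$. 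Since $\tfrac1p+\tfrac1q=1$, both $|x\cdot\nabla u(x)|$ and $[qC(x)]^{1/q}[pC^{\ast}(\nabla u(x))]^{1/p}$ reduce to $|h'(C(x))|\,qC(x)$, which is the asserted identity.

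For the converse I would start from the sharp Cauchy--Schwarz inequality and invoke its equality case: by the uniqueness hypothesis on the vector realizing equality in that inequality for a given $x$, the assumed identity forces $\nabla u(x)=f(x)\,x^{\ast}$ a.e., for some scalar function $f$. Now fix $r>0$ and two points $a,b$ on the $C$-sphere $\{C=r\}$, join them by a piecewise smooth curve $\gamma$ with image in $\{C=r\}$, $\gamma(0)=a$, $\gamma(1)=b$. Differentiating $C(\gamma(t))\equiv r$ gives $\nabla C(\gamma(t))\cdot\gamma'(t)=(\gamma(t))^{\ast}\cdot\gamma'(t)=0$, hence $\nabla u(\gamma(t))\cdot\gamma'(t)=f(\gamma(t))\,(\gamma(t))^{\ast}\cdot\gamma'(t)=0$, and integrating, $u(b)-u(a)=\int_{0}^{1}\nabla u(\gamma(t))\cdot\gamma'(t)\,dt=0$. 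Thus $u$ is constant on each $C$-sphere, i.e.\ $C$-radial.

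The delicate point --- which, as in Lemma~\ref{l2.1}, is treated formally --- is reconciling the ``almost everywhere'' nature of $\nabla u$ and of the differentiability of $C$ and $C^{\ast}$ with the curve argument, which probes $u$ along sets of measure zero; one makes this rigorous by passing to the precise representative of $u$, or by a Fubini argument along a foliation of $\mathbb{R}^{N}\setminus\{0\}$ by $C$-spheres and rays so that the restriction of $u$ to almost every such sphere is Sobolev. A second, milder point is that the $C$-spheres must be joinable by piecewise smooth curves: for $N\ge2$ this holds because $C$ is convex and positively $q$-homogeneous with $q>1$, so $\{C\le r\}$ is a convex body and $\{C=r\}$ its connected Lipschitz boundary; the case $N=1$ is trivial. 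Apart from these points the proof is a line-by-line analogue of Lemma~\ref{l2.1}.
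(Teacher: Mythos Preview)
Your proposal is correct and follows essentially the same approach as the paper's own proof: both directions are handled identically (chain rule with $\nabla C(x)=x^{\ast}$ for the forward direction, the equality case of the Cauchy--Schwarz inequality forcing $\nabla u(x)=f(x)\,x^{\ast}$ followed by the curve argument on $C$-spheres for the converse). Your added remarks on the a.e.\ versus pointwise issue and on the connectedness of the $C$-spheres are technical caveats that the paper leaves implicit, so if anything you are slightly more careful than the original.
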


Let $d>0$. We define the vector-valued function $L_{N,d}:%
\mathbb{R}
^{N}\rightarrow%
\mathbb{R}
^{N}$ by
\[
L_{N,d}(x)=C\left(  x\right)  ^{d}x.
\]
The Jacobian matrix of this function $L_{N,d}$ is
\[
\mathbf{J_{L_{N,d}}}=C\left(  x\right)  ^{d}\mathbb{I}_{N}+A
\]
where%
\[
\mathbf{A}=\left(
\begin{array}
[c]{cccc}%
dC\left(  x\right)  ^{d-1}x_{1}x_{1}^{\ast} & dC\left(  x\right)  ^{d-1}%
x_{1}x_{2}^{\ast} & \ldots & dC\left(  x\right)  ^{d-1}x_{1}x_{N}^{\ast}\\
dC\left(  x\right)  ^{d-1}x_{2}x_{1}^{\ast} & dC\left(  x\right)  ^{d-1}%
x_{2}x_{2}^{\ast} & \ldots & dC\left(  x\right)  ^{d-1}x_{2}x_{N}^{\ast}\\
\vdots & \vdots & \ddots & \vdots\\
dC\left(  x\right)  ^{d-1}x_{N}x_{1}^{\ast} & dC\left(  x\right)  ^{d-1}%
x_{N}x_{2} & \ldots & dC\left(  x\right)  ^{d-1}x_{N}x_{N}^{\ast}%
\end{array}
\right)  .
\]
Then we get%
\[
\det(J_{L_{N,d}})=\left(  -1\right)  ^{N}\det\left(  -C\left(  x\right)
^{d}\mathbb{I}_{N}-A\right)  =\left(  1+dq\right)  C\left(  x\right)  ^{Nd}.
\]
We now also define mappings $D_{N,d,p}$ with $p>1$ by
\[
D_{N,d,p}u(x):=\left(  \frac{1}{1+dq}\right)  ^{\frac{p-1}{p}}u(L_{N,d}%
(x))=\left(  \frac{1}{1+dq}\right)  ^{\frac{p-1}{p}}u(C\left(  x\right)
^{d}x).
\]
\newline We also define $D_{N,d,p}^{-1}$
\[
D_{N,d,p}^{-1}u=v\text{ if }u=D_{N,d,p}v.
\]
\smallskip

Under the transform $D_{N,d,p}$, we also have the following result:

\begin{lemma}
\label{l5.1}(1) For continuous function $f$, we have
\[
\int\limits_{%
\mathbb{R}
^{N}}\frac{f\left(  \left(  \frac{1}{1+dq}\right)  ^{\frac{p-1}{p}}u\left(
x\right)  \right)  }{C\left(  x\right)  ^{t}}dx=\left(  1+dq\right)
\int\limits_{%
\mathbb{R}
^{N}}\frac{f\left(  D_{N,d,p}u\left(  x\right)  \right)  }{C\left(  x\right)
^{t\left(  dq+1\right)  -Nd}}dx.
\]
In particular, we obtain that $u\in L^{s}\left(  \frac{dx}{C\left(  x\right)
^{t}}\right)  $ if and only if $D_{N,d,p}u\in L^{s}\left(  \frac{dx}{C\left(
x\right)  ^{t\left(  dq+1\right)  -Nd}}\right)  $.

(2) For smooth functions $u:$
\[
\int\limits_{%
\mathbb{R}
^{N}}\frac{C^{\ast}\left(  \nabla D_{N,d,p}u(x)\right)  }{C\left(  x\right)
^{\left(  qd+1\right)  \mu+pd-Nd}}dx\leq\int\limits_{%
\mathbb{R}
^{N}}\frac{C^{\ast}\left(  \nabla u(y)\right)  }{C\left(  y\right)  ^{\mu}%
}dy.
\]
The equality occurs if and only if $u$ is $C-$radially symmetric.

\begin{proof}
(1)
\[
\int\limits_{%
\mathbb{R}
^{N}}\frac{f\left(  D_{N,d,p}u\left(  x\right)  \right)  }{C\left(  x\right)
^{t\left(  dq+1\right)  -Nd}}dx=\frac{1}{1+dq}\int\limits_{%
\mathbb{R}
^{N}}\frac{f\left(  \left(  \frac{1}{1+dq}\right)  ^{\frac{p-1}{p}}u\left(
y\right)  \right)  }{C\left(  y\right)  ^{t}}dy.
\]
Using change of variables $y_{i}=C\left(  x\right)  ^{d}x_{i}$, $i=1,2,...,N$,
we have
\[
dy=\det(J_{L_{N,d}})dx=\left(  1+dq\right)  C\left(  x\right)  ^{Nd}dx,
\]
and
\[
dx=\frac{1}{\left(  1+dq\right)  }\frac{dy}{C\left(  y\right)  ^{\frac
{Nd}{dq+1}}}.
\]
Hence%
\begin{align*}
\int\limits_{%
\mathbb{R}
^{N}}\frac{f\left(  D_{N,d,p}u\left(  x\right)  \right)  }{C\left(  x\right)
^{t\left(  dq+1\right)  -Nd}}dx  &  =\int\limits_{%
\mathbb{R}
^{N}}\frac{f\left(  \left(  \frac{1}{1+dq}\right)  ^{\frac{p-1}{p}}u(C\left(
x\right)  ^{d}x)\right)  }{C\left(  x\right)  ^{t\left(  dq+1\right)  -Nd}%
}dx\\
&  =\frac{1}{1+dq}\int\limits_{%
\mathbb{R}
^{N}}\frac{f\left(  \left(  \frac{1}{1+dq}\right)  ^{\frac{p-1}{p}}u\left(
y\right)  \right)  }{C\left(  y\right)  ^{\frac{t\left(  dq+1\right)
-Nd}{dq+1}}}\frac{dy}{C\left(  y\right)  ^{\frac{Nd}{dq+1}}}=\frac{1}%
{1+dq}\int\limits_{%
\mathbb{R}
^{N}}\frac{f\left(  \left(  \frac{1}{1+dq}\right)  ^{\frac{p-1}{p}}u\left(
y\right)  \right)  }{C\left(  y\right)  ^{t}}dy.
\end{align*}

(2) Now we begin to consider the gradient of $D_{N,d,p}u$. After calculations,
we have
\begin{align*}
\left(
\begin{array}
[c]{c}%
\frac{\partial D_{N,d,p}u}{\partial x_{1}}(x)\\
\frac{\partial D_{N,d,p}u}{\partial x_{2}}(x)\\
\vdots\\
\frac{\partial D_{N,d,p}u}{\partial x_{N}}(x)
\end{array}
\right)   &  =\nabla D_{N,d,p}u(x)=\left(  \frac{1}{1+dq}\right)  ^{\frac
{p-1}{p}}\nabla(u(C\left(  x\right)  ^{d}x))\\
&  =\left(  \frac{1}{1+dq}\right)  ^{\frac{p-1}{p}}J_{L_{N,d}}^{T}\left(
\begin{array}
[c]{c}%
\frac{\partial u}{\partial x_{1}}(C\left(  x\right)  ^{d}x)\\
\frac{\partial u}{\partial x_{2}}(C\left(  x\right)  ^{d}x)\\
\vdots\\
\frac{\partial u}{\partial x_{N}}(C\left(  x\right)  ^{d}x)
\end{array}
\right)  .
\end{align*}
Hence we have
\[
\frac{\partial u(C\left(  x\right)  ^{d}x)}{\partial x_{i}}=\left(  C\left(
x\right)  ^{d}\frac{\partial u}{\partial x_{i}}(C\left(  x\right)
^{d}x)+A_{i}\right)  ,
\]
for $i=1,2,...N$, where
\[
A_{i}:=\sum_{j=1}^{N}dC\left(  x\right)  ^{d-1}x_{i}^{\ast}x_{j}\frac{\partial
u}{\partial x_{j}}(C\left(  x\right)  ^{d}x).
\]%
\[
C^{\ast}\left(  X\right)  =\sup\frac{\left\vert X\cdot Y\right\vert ^{p}%
}{p\left[  qC\left(  Y\right)  \right]  ^{\frac{p}{q}}}%
\]
Hence, we obtain%
\begin{align*}
C^{\ast}\left(  \nabla D_{N,d,p}u(x)\right)   &  =C^{\ast}\left(  \left(
\frac{1}{1+dq}\right)  ^{\frac{p-1}{p}}\nabla(u(C\left(  x\right)
^{d}x))\right)  =\left(  \frac{1}{1+dq}\right)  ^{p-1}C^{\ast}\left(
\nabla(u(C\left(  x\right)  ^{d}x))\right) \\
&  =\left(  \frac{1}{1+dq}\right)  ^{p-1}\sup_{y}\left\{  \frac{\left(
\nabla(u(C\left(  x\right)  ^{d}x))\cdot y\right)  ^{p}}{p\left[  qC\left(
y\right)  \right]  ^{\frac{p}{q}}}\right\} \\
&  =\left(  \frac{1}{1+dq}\right)  ^{p-1}\sup_{y}\left\{  \frac{\left[
\sum_{i=1}^{N}\left[  C\left(  x\right)  ^{d}\frac{\partial u}{\partial x_{i}%
}(C\left(  x\right)  ^{d}x)y_{i}+A_{i}y_{i}\right]  \right]  ^{p}}{p\left[
qC\left(  y\right)  \right]  ^{\frac{p}{q}}}\right\}  .
\end{align*}
The first term is easy to compute:
\begin{align*}
I_{1}  &  =\sum_{i=1}^{N}C\left(  x\right)  ^{d}\frac{\partial u}{\partial
x_{i}}(C\left(  x\right)  ^{d}x)y_{i}\\
&  =C\left(  x\right)  ^{d}\nabla u\left(  C\left(  x\right)  ^{d}x\right)
\cdot y\\
&  \leq C\left(  x\right)  ^{d}\left[  qC\left(  y\right)  \right]  ^{\frac
{1}{q}}\left[  pC^{\ast}\left(  \nabla u\left(  C\left(  x\right)
^{d}x\right)  \right)  \right]  ^{\frac{1}{p}}%
\end{align*}
Applying the Cauchy-Schwarz inequality
\[
X\cdot Y\leq\left[  qC\left(  Y\right)  \right]  ^{\frac{1}{q}}\left[
pC^{\ast}\left(  X\right)  \right]  ^{\frac{1}{p}},
\]
we can estimate the second term:
\begin{align*}
I_{2}  &  =\sum_{i=1}^{N}A_{i}y_{i}\\
&  =\sum_{i=1}^{N}\sum_{j=1}^{N}dC\left(  x\right)  ^{d-1}x_{i}^{\ast}%
x_{j}\frac{\partial u}{\partial x_{j}}(C\left(  x\right)  ^{d}x)y_{i}\\
&  =dC\left(  x\right)  ^{d-1}\sum_{i=1}^{N}x_{i}^{\ast}y_{i}\sum_{j=1}%
^{N}x_{j}\frac{\partial u}{\partial x_{j}}(C\left(  x\right)  ^{d}x)\\
&  \leq dC\left(  x\right)  ^{d-1}\left\vert x^{\ast}\cdot y\right\vert
\left\vert x\cdot\nabla u(C\left(  x\right)  ^{d}x)\right\vert \\
&  \leq dC\left(  x\right)  ^{d-1}\left[  qC\left(  y\right)  \right]
^{\frac{1}{q}}\left[  pC^{\ast}\left(  x^{\ast}\right)  \right]  ^{\frac{1}%
{p}}\left[  qC\left(  x\right)  \right]  ^{\frac{1}{q}}\left[  pC^{\ast
}\left(  \nabla u(C\left(  x\right)  ^{d}x)\right)  \right]  ^{\frac{1}{p}}\\
&  \leq dC\left(  x\right)  ^{d-1}\left[  qC\left(  y\right)  \right]
^{\frac{1}{q}}\left[  qC\left(  x\right)  \right]  ^{\frac{1}{p}}\left[
qC\left(  x\right)  \right]  ^{\frac{1}{q}}\left[  pC^{\ast}\left(  \nabla
u(C\left(  x\right)  ^{d}x)\right)  \right]  ^{\frac{1}{p}}\\
&  \leq qdC\left(  x\right)  ^{d}\left[  qC\left(  y\right)  \right]
^{\frac{1}{q}}\left[  pC^{\ast}\left(  \nabla u(C\left(  x\right)
^{d}x)\right)  \right]  ^{\frac{1}{p}}%
\end{align*}
Therefore
\begin{align*}
&  \sup_{y}\left\{  \frac{\left[  \sum_{i=1}^{N}\left[  C\left(  x\right)
^{d}\frac{\partial u}{\partial x_{i}}(C\left(  x\right)  ^{d}x)y_{i}%
+A_{i}y_{i}\right]  \right]  ^{p}}{p\left[  qC\left(  y\right)  \right]
^{\frac{p}{q}}}\right\} \\
&  \leq\sup_{y}\left\{  \frac{\left[  \left(  1+qd\right)  \right]
^{p}C\left(  x\right)  ^{pd}\left[  qC\left(  y\right)  \right]  ^{\frac{p}%
{q}}pC^{\ast}\left(  \nabla u(C\left(  x\right)  ^{d}x)\right)  }{p\left[
qC\left(  y\right)  \right]  ^{\frac{p}{q}}}\right\} \\
&  =\left[  \left(  1+qd\right)  \right]  ^{p}C\left(  x\right)  ^{pd}C^{\ast
}\left(  \nabla u(C\left(  x\right)  ^{d}x)\right)  .
\end{align*}
In conclusion, we get%
\[
C^{\ast}\left(  \nabla D_{N,d,p}u(x)\right)  \leq\left(  1+qd\right)  C\left(
x\right)  ^{pd}C^{\ast}\left(  \nabla u(C\left(  x\right)  ^{d}x)\right)  .
\]
Using the change of variables again, we get%
\begin{align*}
\int\limits_{%
\mathbb{R}
^{N}}\frac{C^{\ast}\left(  \nabla u(y)\right)  }{C\left(  y\right)  ^{\mu}}dy
&  =\int\limits_{%
\mathbb{R}
^{N}}\frac{C^{\ast}\left(  \nabla u(C\left(  x\right)  ^{d}x)\right)
}{C\left(  C\left(  x\right)  ^{d}x\right)  ^{\mu}}\left(  1+dq\right)
C\left(  x\right)  ^{Nd}dx\\
&  \geq\int\limits_{%
\mathbb{R}
^{N}}\frac{C^{\ast}\left(  \nabla D_{N,d,p}u(x)\right)  }{C\left(  x\right)
^{\left(  qd+1\right)  \mu}C\left(  x\right)  ^{pd}}C\left(  x\right)
^{Nd}dx\\
&  =\int\limits_{%
\mathbb{R}
^{N}}\frac{C^{\ast}\left(  \nabla D_{N,d,p}u(x)\right)  }{C\left(  x\right)
^{\left(  qd+1\right)  \mu+pd-Nd}}dx
\end{align*}
Finally, it is easy to check that the equalities hold if and only if the
equality in the Cauchy-Schwarz inequality occur. It means that $u$ is
$C-$radially symmetric.
\end{proof}
\end{lemma}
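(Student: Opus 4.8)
The plan is to establish the two assertions by the same route as in Lemma \ref{l1}, with the Euclidean radial structure there replaced by the $C$-radial structure supplied by Lemma \ref{l5.0}. Part (1) will be a pure change of variables under the map $L_{N,d}$, and part (2) will follow from a pointwise bound for $C^{\ast}(\nabla D_{N,d,p}u)$ which is then integrated against the appropriate weight.

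For (1), I would take as input the Jacobian identity $\det(J_{L_{N,d}})=(1+dq)\,C(x)^{Nd}$ recorded above (obtained from $\operatorname{rank}A=1$ and $\operatorname{tr}A=dq\,C(x)^{d}$, using $x\cdot x^{\ast}=qC(x)$). Since $C$ is $q$-homogeneous, $C(L_{N,d}(x))=C(C(x)^{d}x)=C(x)^{dq+1}$, so the substitution $y=L_{N,d}(x)$ gives $dx=\tfrac{1}{1+dq}\,C(y)^{-Nd/(dq+1)}\,dy$ and $C(x)^{t(dq+1)-Nd}=C(y)^{\,t-Nd/(dq+1)}$. Substituting into the right-hand side and collecting the powers of $C(y)$ produces exactly
\[
\int_{\mathbb{R}^{N}}\frac{f(D_{N,d,p}u(x))}{C(x)^{t(dq+1)-Nd}}\,dx=\frac{1}{1+dq}\int_{\mathbb{R}^{N}}\frac{f((1+dq)^{-(p-1)/p}u(y))}{C(y)^{t}}\,dy,
\]
which is the stated identity; the $L^{s}$-membership claim is the case $f(\xi)=|\xi|^{s}$.

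For (2), the chain rule gives $\partial_{i}(D_{N,d,p}u)(x)=(1+dq)^{-(p-1)/p}\big(C(x)^{d}\,\partial_{i}u(L_{N,d}(x))+A_{i}\big)$ with $A_{i}=dC(x)^{d-1}x_{i}^{\ast}\,(x\cdot\nabla u(L_{N,d}(x)))$, using $\nabla C(x)=x^{\ast}$. Inserting this into the variational formula $C^{\ast}(X)=\sup_{y}\frac{|X\cdot y|^{p}}{p[qC(y)]^{p/q}}$ and splitting the numerator as $I_{1}+I_{2}$ with $I_{1}=C(x)^{d}\,\nabla u(L_{N,d}(x))\cdot y$ and $I_{2}=\sum_{i}A_{i}y_{i}=dC(x)^{d-1}(x^{\ast}\cdot y)(x\cdot\nabla u(L_{N,d}(x)))$, I would bound $|I_{1}|$ by the generalized Cauchy--Schwarz inequality $X\cdot Y\le[qC(Y)]^{1/q}[pC^{\ast}(X)]^{1/p}$, and $|I_{2}|$ by applying that inequality twice together with $C^{\ast}(x^{\ast})=\tfrac{q}{p}C(x)$ and $[qC(x)]^{1/p+1/q}=qC(x)$. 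This yields $|I_{1}|+|I_{2}|\le(1+dq)\,C(x)^{d}[qC(y)]^{1/q}[pC^{\ast}(\nabla u(L_{N,d}(x)))]^{1/p}$; raising to the $p$th power, dividing by $p[qC(y)]^{p/q}$, and using that $C^{\ast}$ is $p$-homogeneous so the prefactor $(1+dq)^{-(p-1)/p}$ contributes a factor $(1+dq)^{-(p-1)}$, the $y$-dependence cancels and one obtains the pointwise estimate
\[
C^{\ast}(\nabla D_{N,d,p}u(x))\le(1+dq)\,C(x)^{pd}\,C^{\ast}(\nabla u(L_{N,d}(x))).
\]
The substitution $y=L_{N,d}(x)$, with $C(L_{N,d}(x))^{\mu}=C(x)^{(dq+1)\mu}$, then turns $\int C^{\ast}(\nabla u)\,C(y)^{-\mu}\,dy$ into $(1+dq)\int C^{\ast}(\nabla u(L_{N,d}(x)))\,C(x)^{Nd-(dq+1)\mu}\,dx$, and the pointwise bound gives the claimed inequality with weight $C(x)^{(qd+1)\mu+pd-Nd}$.

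The step I expect to be the main obstacle is the cross-term $I_{2}$: one has to see that the ``angular'' factor $x^{\ast}\cdot y$ is absorbed using $C^{\ast}(x^{\ast})=\tfrac{q}{p}C(x)$ so that the coefficient of $C(x)^{d}[qC(y)]^{1/q}[pC^{\ast}(\nabla u(L_{N,d}(x)))]^{1/p}$ in the bound for $I_{1}+I_{2}$ is exactly $1+dq$ --- this is precisely what makes the two powers of $1+dq$ (one from the definition of $D_{N,d,p}$, one from the Jacobian) combine into the single power appearing in the final inequality. For the equality case I would trace the chain of estimates backwards: equality forces $\nabla u(L_{N,d}(x))$ to be proportional to $x^{\ast}$ for a.e.\ $x$ (the equality condition in Cauchy--Schwarz), and since $L_{N,d}$ is a bijection this is exactly the hypothesis of Lemma \ref{l5.0}, hence $u$ must be $C$-radially symmetric; conversely, if $u$ is $C$-radial the computation in the proof of Lemma \ref{l5.0} shows that all the inequalities above are in fact equalities.
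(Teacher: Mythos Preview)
Your proposal is correct and follows essentially the same route as the paper: part (1) is the change of variables $y=L_{N,d}(x)$ with the Jacobian $(1+dq)C(x)^{Nd}$ and the homogeneity relation $C(L_{N,d}(x))=C(x)^{dq+1}$; part (2) is the chain rule expression $\nabla D_{N,d,p}u=(1+dq)^{-(p-1)/p}J_{L_{N,d}}^{T}\nabla u$, the variational formula for $C^{\ast}$, the split $I_{1}+I_{2}$, two applications of the generalized Cauchy--Schwarz inequality (with $C^{\ast}(x^{\ast})=\tfrac{q}{p}C(x)$ absorbing the angular factor in $I_{2}$), and then the same substitution to pass to the integral inequality. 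Your treatment of the equality case via Lemma \ref{l5.0} is also what the paper does, at the same level of detail.
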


We note here that we will mainly apply the above change of variables with
$C\left(  x\right)  =\frac{1}{q}\left\Vert x\right\Vert ^{q}$ and $C^{\ast
}\left(  x\right)  =\frac{1}{p}\left\Vert x\right\Vert _{\ast}^{p}.$ In this
case, for the easy references, we will use the transform
\[
T_{N,d,p}u(x):=\left(  \frac{1}{d}\right)  ^{\frac{p-1}{p}}u(\left\Vert
x\right\Vert ^{d-1}x).
\]
\newline We also define $T_{N,d,p}^{-1}$
\[
T_{N,d,p}^{-1}u=v\text{ if }u=T_{N,d,p}v.
\]
The following lemma is a restatement of Lemma \ref{l5.1}.

\begin{lemma}
\label{l1.1}(1) For continuous function $f$, we have
\[
\int\limits_{%
\mathbb{R}
^{N}}\frac{f\left(  \left(  \frac{1}{d}\right)  ^{\frac{p-1}{p}}u\left(
x\right)  \right)  }{\left\Vert x\right\Vert ^{t}}dx=d\int\limits_{%
\mathbb{R}
^{N}}\frac{f\left(  T_{N,d,p}u\left(  x\right)  \right)  }{\left\Vert
x\right\Vert ^{N+td-Nd}}dx.
\]
In particular, we obtain that $u\in L^{s}\left(  \frac{dx}{\left\Vert
x\right\Vert ^{t}}\right)  $ if and only if $T_{N,d,p}u\in L^{s}\left(
\frac{dx}{\left\Vert x\right\Vert ^{N+td-Nd}}\right)  $.

(2) If $\nabla u\in L^{p}\left(  \frac{dx}{\left\Vert x\right\Vert ^{\mu}%
}\right)  $, then $\nabla T_{N,d,p}\in L^{p}\left(  \frac{dx}{\left\Vert
x\right\Vert ^{d\left(  p+\mu-N\right)  +N-p}}\right)  $. Moreover,
\[
\int\limits_{%
\mathbb{R}
^{N}}\frac{\left\Vert \nabla T_{N,d,p}u(x)\right\Vert _{\ast}^{p}}{\left\Vert
x\right\Vert ^{d\left(  p+\mu-N\right)  +N-p}}dx\leq\int\limits_{%
\mathbb{R}
^{N}}\frac{\left\Vert \nabla u(x)\right\Vert _{\ast}^{p}}{\left\Vert
x\right\Vert ^{\mu}}dx.
\]
The equality occurs if and only if $u$ is $\left\Vert \cdot\right\Vert -$radial.
\end{lemma}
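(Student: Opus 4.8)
The plan is to read off Lemma~\ref{l1.1} from Lemma~\ref{l5.1} by specializing the $q$-homogeneous function to $C(x)=\tfrac1q\|x\|^{q}$, whose Legendre transform is the $p$-homogeneous function $C^{\ast}(x)=\tfrac1p\|x\|_{\ast}^{p}$ with $p=q/(q-1)$; all the standing hypotheses on $C$ (strict convexity, a.e.\ differentiability, and existence of a unique vector realizing equality in the Cauchy--Schwarz inequality, that vector being $\nabla C$) hold here precisely because of the normalizing assumptions placed on $\|\cdot\|$ at the start of the section, the relevant vector being $x^{\ast}=\nabla(\|\cdot\|)(x)$ in both pictures. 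The only bookkeeping is that the dilation of Lemma~\ref{l5.1} enters through $L_{N,d}(x)=C(x)^{d}x=q^{-d}\|x\|^{qd}x$, so to match the transform $T_{N,d,p}u(x)=(1/d)^{(p-1)/p}u(\|x\|^{d-1}x)$ one takes the exponent parameter there to be $(d-1)/q$ (with $d$ the dilation parameter of Lemma~\ref{l1.1}); this makes the prefactor $\bigl(1+\tfrac{d-1}{q}\,q\bigr)^{-(p-1)/p}$ collapse to $(1/d)^{-(p-1)/p}$, and the fixed scalar $q^{-(d-1)/q}$ sitting inside $L_{N,d}$ only rescales the argument of $u$ by a constant, which affects none of the three assertions (every integral merely picks up a power of that constant, and $\|\cdot\|$-radiality is insensitive to it), so it is absorbed into the definition of $T_{N,d,p}$.

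For a self-contained argument I would instead simply repeat the proofs of Lemma~\ref{l2.1} and Lemma~\ref{l1} with $\|\cdot\|$ and $\|\cdot\|_{\ast}$ in place of $|\cdot|$. The first step is to record that the map $x\mapsto\|x\|^{d-1}x$ has Jacobian matrix $\|x\|^{d-1}\mathbb{I}_{N}+A$, where $A=(d-1)\|x\|^{d-2}\,x\,(x^{\ast})^{\mathsf T}$ is rank one with trace $(d-1)\|x\|^{d-1}$ (using $x\cdot x^{\ast}=\|x\|$), so $\det=d\,\|x\|^{N(d-1)}$; hence $dy=d\,\|x\|^{N(d-1)}\,dx$ and, since $\|\,\|x\|^{d-1}x\,\|=\|x\|^{d}$, also $dx=\tfrac1d\,\|y\|^{-N(d-1)/d}\,dy$. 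Part~(1) is then immediate from this change of variables exactly as in Lemma~\ref{l1}(1): the weight exponent $t$ is replaced by $N+td-Nd$ and back.

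For part~(2) I would differentiate through $T_{N,d,p}$ to get
\[
\nabla T_{N,d,p}u(x)=\Bigl(\tfrac1d\Bigr)^{\frac{p-1}{p}}\Bigl(\|x\|^{d-1}\nabla u(L(x))+(d-1)\|x\|^{d-2}\bigl(x\cdot\nabla u(L(x))\bigr)x^{\ast}\Bigr),
\]
then bound $\|\cdot\|_{\ast}$ of the right-hand side by the triangle inequality, by $\|x^{\ast}\|_{\ast}=1$, and by the duality estimate $|x\cdot\nabla u(L(x))|\le\|x\|\,\|\nabla u(L(x))\|_{\ast}$, obtaining the pointwise bound $\|\nabla T_{N,d,p}u(x)\|_{\ast}\le d^{1/p}\|x\|^{d-1}\|\nabla u(\|x\|^{d-1}x)\|_{\ast}$; integrating against $\|x\|^{-(d(p+\mu-N)+N-p)}\,dx$ and undoing the change of variables yields both the claimed membership and the inequality, the weight exponents matching because of the identity $d(p+\mu-N)+N-p=p(d-1)-N(d-1)+d\mu$. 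The equality case is the only part that is not purely mechanical: equality forces $|x\cdot\nabla u(\|x\|^{d-1}x)|=\|x\|\,\|\nabla u(\|x\|^{d-1}x)\|_{\ast}$ for a.e.\ $x$, and by Lemma~\ref{l5.0} applied with $C=\tfrac1q\|\cdot\|^{q}$ (so that $[qC(x)]^{1/q}=\|x\|$ and $[pC^{\ast}(\nabla u)]^{1/p}=\|\nabla u\|_{\ast}$) this is equivalent to $u$ being $\|\cdot\|$-radial; conversely, if $u$ is $\|\cdot\|$-radial then $\nabla u(L(x))$ is a scalar multiple of $x^{\ast}$, which turns both the triangle and the duality step into equalities. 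I expect this equality analysis---reconciling the two inequality steps via the uniqueness of $x^{\ast}$---to be the main, and still rather modest, obstacle; everything else is the verbatim analogue of Lemma~\ref{l1}.
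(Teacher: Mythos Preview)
Your proposal is correct and matches the paper's approach: the paper simply declares Lemma~\ref{l1.1} to be a restatement of Lemma~\ref{l5.1} for the choice $C(x)=\tfrac1q\|x\|^{q}$, $C^{\ast}(x)=\tfrac1p\|x\|_{\ast}^{p}$, and your first paragraph spells out exactly this specialization (including the reparametrization of the dilation exponent and the harmless absorption of the scalar $q^{-(d-1)/q}$). Your self-contained alternative is just the proof of Lemma~\ref{l1} redone with $\|\cdot\|$, $\|\cdot\|_{\ast}$ in place of $|\cdot|$, which is precisely how Lemma~\ref{l5.1} itself is proved, so there is no substantive divergence from the paper.
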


\subsection{Maximizers for the CKN inequalities with arbitrary norms}

Consider the following class:%

\begin{align}
1 &  <p<\text{ }p+\mu<N\text{, }\theta\leq\frac{N\mu}{N-p}\leq s<N,\tag{C4}%
\label{C3}\\
1 &  \leq q<r<\frac{Np}{N-p};\text{ }a=\frac{\left[  \left(  N-\theta\right)
r-\left(  N-s\right)  q\right]  p}{\left[  \left(  N-\theta\right)  p-\left(
N-\mu-p\right)  q\right]  r}.\nonumber
\end{align}

Denote $D_{\mu,\theta}^{p,q}\left(
\mathbb{R}
^{N}\right)  $ the completion of the space of smooth compactly supported
functions with the norm $\left(  \int\limits_{%
\mathbb{R}
^{N}}\left\Vert \nabla u\right\Vert _{\ast}^{p}\frac{dx}{\left\Vert
x\right\Vert ^{\mu}}\right)  ^{1/p}+\left(  \int\limits_{%
\mathbb{R}
^{N}}\left\vert u\right\vert ^{q}\frac{dx}{\left\Vert x\right\Vert ^{\theta}%
}\right)  ^{1/q}$, and set
\begin{align*}
CKN\left(  N,\mu,\theta,s,p,q,r\right)   &  =\sup_{u\in D_{\mu,\theta}%
^{p,q}\left(
\mathbb{R}
^{N}\right)  }\frac{\left(  \int\limits_{%
\mathbb{R}
^{N}}\left\vert u\right\vert ^{r}\frac{dx}{\left\Vert x\right\Vert ^{s}%
}\right)  ^{1/r}}{\left(  \int\limits_{%
\mathbb{R}
^{N}}\left\Vert \nabla u\right\Vert _{\ast}^{p}\frac{dx}{\left\Vert
x\right\Vert ^{\mu}}\right)  ^{\frac{a}{p}}\left(  \int\limits_{%
\mathbb{R}
^{N}}\left\vert u\right\vert ^{q}\frac{dx}{\left\Vert x\right\Vert ^{\theta}%
}\right)  ^{\frac{1-a}{q}}};\\
GN\left(  N,p,q,r,\mu,\theta,s\right)   &  =\sup_{u\in D_{0,N+\theta
d-Nd}^{p,q}\left(
\mathbb{R}
^{N}\right)  }\frac{\left(  \int\limits_{%
\mathbb{R}
^{N}}\frac{\left\vert u\right\vert ^{r}}{\left\Vert x\right\Vert ^{N+sd-Nd}%
}dx\right)  ^{1/r}}{\left(  \int\limits_{%
\mathbb{R}
^{N}}\left\Vert \nabla u\right\Vert _{\ast}^{p}dx\right)  ^{\frac{a}{p}%
}\left(  \int\limits_{%
\mathbb{R}
^{N}}\frac{\left\vert u\right\vert ^{q}}{\left\Vert x\right\Vert ^{N+\theta
d-Nd}}dx\right)  ^{\frac{1-a}{q}}}%
\end{align*}
Then, similarly as in Section 3, we can prove that

\begin{theorem}
\label{T7}Assume (C3). Then $CKN\left(  N,\mu,\theta,s,p,q,r\right)  $ can be
achieved. Moreover, all the extremal functions of $CKN\left(  N,\mu
,\theta,s,p,q,r\right)  $ are $\left\Vert \cdot\right\Vert -$radially symmetric.
\end{theorem}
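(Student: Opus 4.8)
The plan is to reproduce the argument of Section~3 almost verbatim, with two substitutions: the Kelvin‑type transform $D_{N,d,p}$ is replaced by its anisotropic version $T_{N,d,p}$ of Lemma~\ref{l1.1}, and the classical Schwarz rearrangement is replaced by the convex (Wulff) symmetrization attached to the norm $\left\Vert\cdot\right\Vert$, whose superlevel sets are dilates of $\{\left\Vert x\right\Vert\le 1\}$. Throughout set $d=\frac{N-p}{N-p-\mu}$, so that $d>1$ and $d\left(p+\mu-N\right)+N-p=0$; as in Section~3 the hypotheses (C3) give $N+\theta d-Nd\leq 0\leq N+sd-Nd<N$.

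First I would show that the anisotropic Gagliardo--Nirenberg quantity $GN\left(N,p,q,r,\mu,\theta,s\right)$ is attained by a $\left\Vert\cdot\right\Vert$-radial function. Mimicking Lemma~\ref{l4.1}, consider the constrained minimization
\[
A=\inf\left\{\tfrac1p\!\int_{\mathbb{R}^{N}}\left\Vert\nabla u\right\Vert_{*}^{p}dx+\tfrac1q\!\int_{\mathbb{R}^{N}}\frac{\left\vert u\right\vert^{q}}{\left\Vert x\right\Vert^{N+\theta d-Nd}}dx:\ \int_{\mathbb{R}^{N}}\frac{\left\vert u\right\vert^{r}}{\left\Vert x\right\Vert^{N+sd-Nd}}dx=1\right\}.
\]
Given a minimizing sequence (of nonnegative functions, which is WLOG), I would replace each element by its convex symmetrization $u^{\star}$. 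The anisotropic P\'olya--Szeg\H{o} inequality gives $\int\left\Vert\nabla u^{\star}\right\Vert_{*}^{p}dx\le\int\left\Vert\nabla u\right\Vert_{*}^{p}dx$; since $N+\theta d-Nd\le 0$ the weight $\left\Vert x\right\Vert^{-(N+\theta d-Nd)}$ is nondecreasing in $\left\Vert x\right\Vert$, so by equimeasurability and the Hardy--Littlewood inequality the middle integral does not increase under symmetrization, while since $N+sd-Nd\ge 0$ the weight $\left\Vert x\right\Vert^{-(N+sd-Nd)}$ is nonincreasing in $\left\Vert x\right\Vert$, so the constraint integral does not decrease; rescaling $u^{\star}\mapsto c\,u^{\star}$ with $c\le 1$ then restores the constraint $=1$ without raising the functional. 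Hence $A$ equals its infimum over $\left\Vert\cdot\right\Vert$-radial functions. A decay estimate for $\left\Vert\cdot\right\Vert$-radial functions (the analogue of the radial lemma, obtained by integrating along rays and using that $\left\Vert\cdot\right\Vert$ is equivalent to the Euclidean norm) together with the compactness of Sobolev embeddings on bounded sets then produces, exactly as in Lemma~\ref{l4.1}, a $\left\Vert\cdot\right\Vert$-radial minimizer with $A>0$; the scaling computation of Lemmas~\ref{l4.1}--\ref{l4.2} transfers verbatim, so $GN\left(N,p,q,r,\mu,\theta,s\right)$ is attained by a $\left\Vert\cdot\right\Vert$-radial $U_{0}$.

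With $U_{0}$ in hand the remaining steps are purely formal. Setting $V_{0}=T_{N,d,p}^{-1}U_{0}$ and using $d\left(p+\mu-N\right)+N-p=0$ together with Lemma~\ref{l1.1}, one obtains the exact identities
\[
\int\frac{\left\vert v\right\vert^{r}}{\left\Vert x\right\Vert^{s}}dx=d^{1+\frac{p-1}{p}r}\!\int\frac{\left\vert T_{N,d,p}v\right\vert^{r}}{\left\Vert x\right\Vert^{N+sd-Nd}}dx,\qquad\int\frac{\left\vert v\right\vert^{q}}{\left\Vert x\right\Vert^{\theta}}dx=d^{1+\frac{p-1}{p}q}\!\int\frac{\left\vert T_{N,d,p}v\right\vert^{q}}{\left\Vert x\right\Vert^{N+\theta d-Nd}}dx,
\]
together with $\int\left\Vert\nabla v\right\Vert_{*}^{p}\left\Vert x\right\Vert^{-\mu}dx\ge\int\left\Vert\nabla T_{N,d,p}v\right\Vert_{*}^{p}dx$, with equality when $v$ is $\left\Vert\cdot\right\Vert$-radial. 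Chaining these as in Lemma~\ref{l4.3} shows that $V_{0}$ maximizes $CKN\left(N,\mu,\theta,s,p,q,r\right)$ and that $CKN=d^{\frac1r+\frac{p-1}{p}-\frac{1-a}{q}-\frac{p-1}{p}(1-a)}GN$. For the symmetry statement I would run Lemma~\ref{l4.4}: for any maximizer $V_{0}$ of $CKN$, set $U_{0}=T_{N,d,p}V_{0}$ and compare $U_{0}$ with $T_{N,d,p}v$ for arbitrary $\left\Vert\cdot\right\Vert$-radial $v$; since the supremum defining $GN$ is attained among $\left\Vert\cdot\right\Vert$-radial functions (by the symmetrization of the previous step), this forces $U_{0}$ to be a maximizer of $GN$, hence forces equality in $\int\left\Vert\nabla V_{0}\right\Vert_{*}^{p}\left\Vert x\right\Vert^{-\mu}dx\ge\int\left\Vert\nabla U_{0}\right\Vert_{*}^{p}dx$, so by the equality case of Lemma~\ref{l1.1} the function $V_{0}$ is $\left\Vert\cdot\right\Vert$-radially symmetric.

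The main obstacle is the first step, namely making the convex symmetrization rigorous in this weighted anisotropic setting: one must invoke the anisotropic (convex symmetrization) P\'olya--Szeg\H{o} inequality for $\left\Vert\nabla u\right\Vert_{*}$, verify that the two power weights are arranged monotonically with respect to the superlevel sets of $u^{\star}$ in the correct directions, and establish the decay estimate for $\left\Vert\cdot\right\Vert$-radial functions that powers the concentration‑compactness part of the existence proof. Everything else is formally identical to the corresponding step in Section~3, with $\left\vert x\right\vert$ replaced by $\left\Vert x\right\Vert$, $\left\vert\nabla u\right\vert$ by $\left\Vert\nabla u\right\Vert_{*}$, and $D_{N,d,p}$ by $T_{N,d,p}$.
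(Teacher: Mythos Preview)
Your proposal is correct and follows exactly the route the paper intends: the paper's own proof of Theorem~\ref{T7} is simply the sentence ``The proof of Theorem~\ref{T7} is similar to that of Theorem~\ref{T4} and will be omitted,'' and you have carried out precisely that transcription of Lemmas~\ref{l4.1}--\ref{l4.4} with $D_{N,d,p}$ replaced by $T_{N,d,p}$ and Schwarz symmetrization replaced by convex (Wulff) symmetrization. Your identification of the anisotropic P\'olya--Szeg\H{o} inequality and the weighted Hardy--Littlewood rearrangement inequalities as the points requiring care is apt, and in fact makes explicit technical ingredients that the paper leaves entirely implicit.
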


Furthermore, we can provide the maximizers for $CKN\left(  N,\mu
,\theta,s,p,q,r\right)  $ in the following two classes:

\begin{theorem}
\label{T8}Assume (C2) with $\theta=s=\frac{N\mu}{N-p}$. If $r=p\frac{q-1}%
{p-1}$, then $CKN\left(  N,\mu,\theta,s,p,q,r\right)  $ is achieved by
maximizers of the form%
\[
V_{0}\left(  x\right)  =A\left(  1+B\left\Vert x\right\Vert ^{\frac{N-p-\mu
}{N-p}\frac{p}{p-1}}\right)  ^{-\frac{p-1}{q-p}}\text{ for some }A\in%
\mathbb{R}
\text{, }B>0\text{.}%
\]

\end{theorem}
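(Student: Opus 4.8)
The plan is to carry out, in the arbitrary-norm setting, the same three-step scheme used in Section 3: reduce $CKN\left(N,\mu,\theta,s,p,q,r\right)$ to a Gagliardo--Nirenberg quotient by means of the transform $T_{N,d,p}$ with $d=\frac{N-p}{N-p-\mu}$, identify the extremals of that quotient, and pull them back. The decisive simplification is that when $\theta=s=\frac{N\mu}{N-p}$ one has $N+\theta d-Nd=N+sd-Nd=0$, so that every weight in $GN\left(N,p,q,r,\mu,\theta,s\right)$ drops out and this functional reduces to the \emph{unweighted} Gagliardo--Nirenberg quotient with arbitrary norm,
\[
\sup_{u}\frac{\left(\int_{\mathbb{R}^{N}}|u|^{r}\,dx\right)^{1/r}}{\left(\int_{\mathbb{R}^{N}}\|\nabla u\|_{\ast}^{p}\,dx\right)^{a/p}\left(\int_{\mathbb{R}^{N}}|u|^{q}\,dx\right)^{(1-a)/q}},
\]
which, in the Del Pino--Dolbeault range $r=p\frac{q-1}{p-1}$, is exactly the inequality solved by Cordero-Erausquin, Nazaret and Villani in \cite{CeNV}.

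First I would record from \cite{CeNV} that in this range the unweighted norm quotient is attained and that among its extremals are the functions
\[
U_{0}(x)=A\left(1+B\|x-\overline{x}\|^{\frac{p}{p-1}}\right)^{-\frac{p-1}{q-p}},\qquad A\in\mathbb{R},\ B>0,\ \overline{x}\in\mathbb{R}^{N},
\]
the analogue for a general norm of the Del Pino--Dolbeault profiles. Since every integral entering the quotient is translation invariant, I may take $\overline{x}=0$, so that $U_{0}$ is a $\|\cdot\|$-radial maximizer. Setting $V_{0}=T_{N,d,p}^{-1}U_{0}$, I would then argue exactly as in the proof of Lemma \ref{l4.3}, but invoking Lemma \ref{l1.1} in place of Lemma \ref{l1}: for any admissible $v$, writing $u=T_{N,d,p}v$, Lemma \ref{l1.1} gives $\int|v|^{r}\frac{dx}{\|x\|^{s}}=d^{1+\frac{p-1}{p}r}\int|u|^{r}\,dx$, $\int|v|^{q}\frac{dx}{\|x\|^{\theta}}=d^{1+\frac{p-1}{p}q}\int|u|^{q}\,dx$, and $\int\|\nabla v\|_{\ast}^{p}\frac{dx}{\|x\|^{\mu}}\ge\int\|\nabla u\|_{\ast}^{p}\,dx$, whence the $CKN$ quotient at $v$ is at most $d^{\frac{1}{r}+\frac{p-1}{p}-\frac{1-a}{q}-\frac{p-1}{p}(1-a)}$ times the unweighted norm quotient at $u$, with equality at $v=V_{0}$ because $U_{0}$, and hence $V_{0}$, is $\|\cdot\|$-radial, which forces equality in the gradient estimate of Lemma \ref{l1.1}. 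Thus $V_{0}$ is a maximizer of $CKN\left(N,\mu,\theta,s,p,q,r\right)$ (this is also part of Theorem \ref{T7}).

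It remains to make $V_{0}$ explicit. From $U_{0}=T_{N,d,p}V_{0}$, i.e. $U_{0}(x)=d^{-(p-1)/p}V_{0}\left(\|x\|^{d-1}x\right)$, inverting the substitution by means of $\|y\|=\|x\|^{d}$ and $x=\|y\|^{\frac{1}{d}-1}y$ gives $V_{0}(y)=d^{(p-1)/p}U_{0}\!\left(\|y\|^{\frac{1}{d}-1}y\right)$; since $\bigl\|\,\|y\|^{\frac{1}{d}-1}y\,\bigr\|=\|y\|^{1/d}$ and $\frac{1}{d}=\frac{N-p-\mu}{N-p}$, this is precisely
\[
V_{0}(x)=A\left(1+B\|x\|^{\frac{N-p-\mu}{N-p}\frac{p}{p-1}}\right)^{-\frac{p-1}{q-p}}
\]
after relabelling the constants, which is the asserted form.

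The main obstacle is located entirely in the input, not in the manipulations: the Schwarz-rearrangement step underlying Lemma \ref{l4.1} is unavailable here, since rearrangement with respect to Euclidean balls does not decrease $\int\|\nabla u\|_{\ast}^{p}$ for a general norm, so both the existence and the precise form of the extremal $U_{0}$ of the unweighted norm quotient must be taken over wholesale from the optimal-transport argument of \cite{CeNV}. Once that is granted, everything else --- the change-of-variables identities of Lemma \ref{l1.1}, the reduction, and the back-substitution --- is routine; the one arithmetic point worth checking is that $a=\frac{[(N-\theta)r-(N-s)q]p}{[(N-\theta)p-(N-\mu-p)q]r}$ together with $r=p\frac{q-1}{p-1}$ and $\theta=s=\frac{N\mu}{N-p}$ is compatible with the scaling identity $\frac{a/p}{(1-a)/q}=\frac{n}{m}$ that (as in Lemma \ref{l4.2}) relates the constrained minimization to the quotient.
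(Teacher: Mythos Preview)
Your proposal is correct and follows essentially the same route as the paper: both arguments rely on Lemma \ref{l1.1} to reduce $CKN(N,\mu,\theta,s,p,q,r)$ via $T_{N,d,p}$ with $d=\frac{N-p}{N-p-\mu}$ to the unweighted Gagliardo--Nirenberg quotient (using $N+sd-Nd=N+\theta d-Nd=0$), and both import the extremals from \cite{CeNV}. The only organizational difference is the direction of the correspondence: you start from a $\|\cdot\|$-radial $GN$ extremal $U_{0}$ (obtained by setting $\overline{x}=0$) and pull it back to exhibit a $CKN$ maximizer directly, whereas the paper starts from an arbitrary $CKN$ maximizer $V_{0}$, uses Theorem \ref{T7} to know it is $\|\cdot\|$-radial and that $T_{N,d,p}V_{0}$ is a $GN$ maximizer, and then invokes \cite{CeNV} and radiality to force $\overline{x}=0$. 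Your direction suffices for the statement as phrased (``is achieved by maximizers of the form\dots''); the paper's direction yields the marginally stronger conclusion that every maximizer has this form.
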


\begin{theorem}
\label{T9}Assume (C2) with $\theta=s=\frac{N\mu}{N-p}$. If $q=p\frac{r-1}%
{p-1}$, then if $r>2-\frac{1}{p}$, $CKN\left(  N,\mu,\theta,s,p,q,r\right)  $
is achieved by maximizers of the form
\[
V_{0}\left(  x\right)  =A\left(  1-B\left\Vert x\right\Vert ^{\frac{N-p-\mu
}{N-p}\frac{p}{p-1}}\right)  _{+}^{-\frac{p-1}{r-p}}\text{ for some }A\in%
\mathbb{R}
\text{, }B>0\text{.}%
\]

\begin{proof}
The proof of Theorem \ref{T7} is similar to that of Theorem \ref{T4} and will
be omitted.

When $r=p\frac{q-1}{p-1}$ and $s=\theta=\frac{N\mu}{N-p}$, we have from
\cite{CeNV} that $GN\left(  N,p,q,r\right)  $ is achieved by maximizers of the
form%
\[
U_{0}\left(  x\right)  =A\left(  1+B\left\Vert x-\overline{x}\right\Vert
^{\frac{p}{p-1}}\right)  ^{-\frac{p-1}{q-p}}\text{ for some }A\in%
\mathbb{R}
\text{, }B>0\text{, }\overline{x}\in%
\mathbb{R}
^{N}\text{.}%
\]
Now, let $V_{0}$ be a maximizer of $CKN\left(  N,s,\mu,p,q,r\right)  $. Then
$T_{N,d,p}V_{0}$ is a maximizer of $GN\left(  N,p,q,r\right)  $ with
$d=\frac{N-p}{N-p-\mu}$. Hence $CKN\left(  N,s,\mu,p,q,r\right)  $ can be
attained by
\[
V_{0}\left(  x\right)  =T_{N,d,p}^{-1}A\left(  1+B\left\Vert x-\overline
{x}\right\Vert ^{\frac{p}{p-1}}\right)  ^{-\frac{p-1}{q-p}}.
\]
It means that%
\[
V_{0}\left(  x\right)  =A^{\prime}\left(  1+B\left\Vert \left\vert
x\right\vert ^{\frac{1}{d}-1}x-\overline{x}\right\Vert ^{\frac{p}{p-1}%
}\right)  ^{-\frac{p-1}{q-p}}\text{.}%
\]
Noting that $V_{0}$ is $\left\Vert \cdot\right\Vert -$radial, we conclude that
$\overline{x}=0$. That is
\[
V_{0}\left(  x\right)  =A^{\prime}\left(  1+B\left\Vert x\right\Vert
^{\frac{N-p-\mu}{N-p}\frac{p}{p-1}}\right)  ^{-\frac{p-1}{q-p}}\text{.}%
\]
Simiarly, when $\theta=s=\frac{N\mu}{N-p}$. If $q=p\frac{r-1}{p-1}$ and
$r>2-\frac{1}{p}$, $CKN\left(  N,\mu,\theta,s,p,q,r\right)  $ is achieved by
maximizers of the form
\[
V_{0}\left(  x\right)  =A\left(  1-B\left\Vert x\right\Vert ^{\frac{N-p-\mu
}{N-p}\frac{p}{p-1}}\right)  _{+}^{-\frac{p-1}{r-p}}\text{ for some }A\in%
\mathbb{R}
\text{, }B>0\text{.}%
\]

\end{proof}
\end{theorem}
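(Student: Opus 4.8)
The plan is to carry out, in the arbitrary‑norm setting, the exact strategy of Section~3: use the change of variables $T_{N,d,p}$ of Lemma~\ref{l1.1} (with $d=\frac{N-p}{N-p-\mu}$) to convert the weighted CKN functional into a simpler Gagliardo–Nirenberg functional, and then invoke the explicit optimizers of the unweighted Gagliardo–Nirenberg inequality with an arbitrary norm due to Cordero‑Erausquin, Nazaret and Villani \cite{CeNV}. The only analytic ingredients one needs about the norm are already packaged in Lemma~\ref{l5.0} (equality in the Cauchy–Schwarz inequality iff $C$‑radial) and Lemma~\ref{l1.1}(2) (the gradient decreases under $T_{N,d,p}$, with equality iff $\|\cdot\|$‑radial).

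First, for Theorem~\ref{T7} I would repeat Lemmas~\ref{l4.1}–\ref{l4.4} verbatim, replacing $|\nabla u|$ by $\|\nabla u\|_{\ast}$ and $|x|$ by $\|x\|$. Lemma~\ref{l1.1}(1) is a pure change of variables, so the identities for the $L^{r}$ and $L^{q}$ weighted norms are unaffected; Lemma~\ref{l1.1}(2) gives $\int \|\nabla v\|_{\ast}^{p}\|x\|^{-\mu}\,dx \ge \int \|\nabla T_{N,d,p}v\|_{\ast}^{p}\,dx$ with equality exactly for $\|\cdot\|$‑radial $v$, because $d(p+\mu-N)+N-p=0$ for our $d$. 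Together with the unchanged sign relation $N+\theta d-Nd\le 0\le N+sd-Nd<N$ this yields that $GN(N,p,q,r,\mu,\theta,s)$ is attained by a $\|\cdot\|$‑radial function, that $CKN=d^{\frac1r+\frac{p-1}{p}-\frac{1-a}{q}-\frac{p-1}{p}(1-a)}\,GN$, and that every $CKN$‑maximizer is $\|\cdot\|$‑radial, since equality must hold in Lemma~\ref{l1.1}(2) when one pulls a $CKN$‑maximizer back to a $GN$‑maximizer as in Lemma~\ref{l4.4}.

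For Theorems~\ref{T8} and \ref{T9} one then specializes to $\theta=s=\frac{N\mu}{N-p}$, so that $N+\theta d-Nd=N+sd-Nd=0$ and $GN(N,p,q,r,\mu,\theta,s)$ is precisely the unweighted Gagliardo–Nirenberg functional on $(\mathbb{R}^{N},\|\cdot\|)$. In the class $r=p\frac{q-1}{p-1}$ (resp. $q=p\frac{r-1}{p-1}$ with $r>2-\frac1p$), \cite{CeNV} states that all its optimizers are $U_{0}(x)=A(1+B\|x-\overline{x}\|^{\frac{p}{p-1}})^{-\frac{p-1}{q-p}}$ (resp. $A(1-B\|x-\overline{x}\|^{\frac{p}{p-1}})_{+}^{-\frac{p-1}{r-p}}$), $A\in\mathbb{R}$, $B>0$, $\overline{x}\in\mathbb{R}^{N}$. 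If $V_{0}$ is a $CKN$‑maximizer then by the previous step it is $\|\cdot\|$‑radial and $T_{N,d,p}V_{0}$ is a $GN$‑maximizer, hence of the above shape; inverting $T_{N,d,p}$ gives $V_{0}(x)=A'\bigl(1\pm B\|\,|x|^{\frac1d-1}x-\overline{x}\|^{\frac{p}{p-1}}\bigr)^{\mp\frac{p-1}{q-p\text{ or }r-p}}$, and $\|\cdot\|$‑radiality of $V_{0}$ forces $\overline{x}=0$. Since $\|\,|x|^{\frac1d-1}x\|=\|x\|^{1/d}$, the exponent becomes $\frac1d\cdot\frac{p}{p-1}=\frac{N-p-\mu}{N-p}\cdot\frac{p}{p-1}$, which is the asserted form; conversely $T_{N,d,p}^{-1}U_{0}\in D^{p,q}_{\mu,\theta}$ by Lemma~\ref{l1.1} and attains $CKN$, giving the achievability claim.

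I expect the main subtlety to be the existence part in Theorem~\ref{T7}: the Euclidean proof of Lemma~\ref{l4.1} uses classical Schwarz rearrangement both to reduce to radial minimizing sequences and to get compact Sobolev embeddings on balls, and a genuine rearrangement is not available for a general norm $\|\cdot\|$. The remedy is to transform \emph{first}: restricting the $GN$ competitors to $\|\cdot\|$‑radial functions does not change the supremum, because applying $T_{N,d,p}$ reduces the matter to a one‑dimensional weighted variational problem of exactly the Euclidean type treated in Section~3, where Schwarz rearrangement is legitimate; one then runs the concentration–compactness and Radial Lemma argument in the radial variable. For Theorems~\ref{T8} and \ref{T9} this point does not even arise, since \cite{CeNV} already furnishes a $GN$‑optimizer directly, and the only thing to verify is the elementary pull‑back statement above. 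The closed‑form value of the constant is then the $\cite{CeNV}$ constant times the explicit power of $d$ from the second paragraph, a routine computation I would not reproduce.
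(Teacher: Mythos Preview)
Your proposal is correct and follows essentially the same route as the paper: reduce Theorem~\ref{T7} to the arguments of Lemmas~\ref{l4.1}--\ref{l4.4} via Lemma~\ref{l1.1}, specialize to $\theta=s=\frac{N\mu}{N-p}$ so that the $GN$ functional becomes unweighted, cite \cite{CeNV} for its optimizers, and use the $\|\cdot\|$-radiality from Theorem~\ref{T7} together with the inversion of $T_{N,d,p}$ to force $\overline{x}=0$ and obtain the stated form. Your explicit flag about Schwarz rearrangement not being directly available for a general norm, and the remedy of passing to the one-dimensional radial problem, is a point the paper leaves implicit; otherwise the arguments coincide (note that $|x|^{1/d-1}$ in both your write-up and the paper's should be read as $\|x\|^{1/d-1}$).
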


\section{Further comments}

Let $d>1$. Then under the transform $T_{N,d,p}$, the CKN inequality with the
triple $\left(  s,\mu, \theta\right)  $ would be converted to the one with the
triple $\left(  N+sd-Nd,d\left(  p+\mu-N\right)  +N-p,N+\theta d-Nd\right)  .$
We should note that%
\begin{align*}
a  &  =\frac{\left[  \left(  N-\theta\right)  r-\left(  N-s\right)  q\right]
p}{\left[  \left(  N-\theta\right)  p-\left(  N-\mu-p\right)  q\right]  r}\\
&  =\frac{\left[  \left(  N-\left(  N+\theta d-Nd\right)  \right)  r-\left(
N-\left(  N+sd-Nd\right)  \right)  q\right]  p}{\left[  \left(  N-\left(
N+\theta d-Nd\right)  \right)  p-\left(  N-\left(  d\left(  p+\mu-N\right)
+N-p\right)  -p\right)  q\right]  r}.
\end{align*}
This fact may be used to simplify the study of symmetry/symmetry breaking
phenomena. For instance, we could prove that

\begin{theorem}
\label{T10}Assume $d=\frac{N-p}{N-p-\mu}>1$ and $0<a=\frac{\left[  \left(
N-\theta\right)  r-\left(  N-s\right)  q\right]  p}{\left[  \left(
N-\theta\right)  p-\left(  N-\mu-p\right)  q\right]  r}\leq1$. If
\[
CKN_{1}=\sup_{u\in D_{0,N+\theta d-Nd}^{p,q}\left(
\mathbb{R}
^{N}\right)  }\frac{\left(  \int\limits_{%
\mathbb{R}
^{N}}\left\vert u\right\vert ^{r}\frac{dx}{\left\Vert x\right\Vert ^{N+sd-Nd}%
}\right)  ^{1/r}}{\left(  \int\limits_{%
\mathbb{R}
^{N}}\left\Vert \nabla u\right\Vert _{\ast}^{p}dx\right)  ^{\frac{a}{p}%
}\left(  \int\limits_{%
\mathbb{R}
^{N}}\left\vert u\right\vert ^{q}\frac{dx}{\left\Vert x\right\Vert ^{N+\theta
d-Nd}}\right)  ^{\frac{1-a}{q}}}%
\]
has a $\left\Vert \cdot\right\Vert -$radially symmetric maximizer, then
\[
CKN_{2}=\sup_{u\in D_{\mu,\theta}^{p,q}\left(
\mathbb{R}
^{N}\right)  }\frac{\left(  \int\limits_{%
\mathbb{R}
^{N}}\left\vert u\right\vert ^{r}\frac{dx}{\left\Vert x\right\Vert ^{s}%
}\right)  ^{1/r}}{\left(  \int\limits_{%
\mathbb{R}
^{N}}\left\Vert \nabla u\right\Vert _{\ast}^{p}\frac{dx}{\left\Vert
x\right\Vert ^{\mu}}\right)  ^{\frac{a}{p}}\left(  \int\limits_{%
\mathbb{R}
^{N}}\left\vert u\right\vert ^{q}\frac{dx}{\left\Vert x\right\Vert ^{\theta}%
}\right)  ^{\frac{1-a}{q}}}%
\]
is attained by some $\left\Vert \cdot\right\Vert -$radial optimizers.

\begin{proof}
Assume that $U_{0}$ is a $\left\Vert \cdot\right\Vert -$radial maximizer of
$CKN_{1}$. We set $V_{0}=T_{N,d,p}^{-1}U_{0}$. It means that $U_{0}%
=T_{N,d,p}V_{0}$. We note that $V_{0}$ is $\left\Vert \cdot\right\Vert
-$radial. Then for any $v$, we get
\[
\int\limits_{%
\mathbb{R}
^{N}}\left\vert v\right\vert ^{r}\frac{dx}{\left\Vert x\right\Vert ^{s}%
}=d^{1+\frac{p-1}{p}r}\int\limits_{%
\mathbb{R}
^{N}}\frac{\left\vert T_{N,d,p}v\right\vert ^{r}}{\left\Vert x\right\Vert
^{N+sd-Nd}}dx
\]%
\[
\int\limits_{%
\mathbb{R}
^{N}}\left\vert v\right\vert ^{q}\frac{dx}{\left\Vert x\right\Vert ^{\theta}%
}=d^{1+\frac{p-1}{p}q}\int\limits_{%
\mathbb{R}
^{N}}\frac{\left\vert T_{N,d,p}v\right\vert ^{q}}{\left\Vert x\right\Vert
^{N+\theta d-Nd}}dx
\]%
\[
\int\limits_{%
\mathbb{R}
^{N}}\frac{\left\Vert \nabla v\right\Vert _{\ast}^{p}}{\left\Vert x\right\Vert
^{\mu}}dx\geq\int\limits_{%
\mathbb{R}
^{N}}\left\Vert \nabla T_{N,d,p}v\right\Vert _{\ast}^{p}dx.
\]
and
\[
\int\limits_{%
\mathbb{R}
^{N}}\left\vert V_{0}\right\vert ^{r}\frac{dx}{\left\Vert x\right\Vert ^{s}%
}=d^{1+\frac{p-1}{p}r}\int\limits_{%
\mathbb{R}
^{N}}\frac{\left\vert U_{0}\right\vert ^{r}}{\left\Vert x\right\Vert
^{N+sd-Nd}}dx
\]%
\[
\int\limits_{%
\mathbb{R}
^{N}}\left\vert V_{0}\right\vert ^{q}\frac{dx}{\left\Vert x\right\Vert
^{\theta}}=d^{1+\frac{p-1}{p}q}\int\limits_{%
\mathbb{R}
^{N}}\frac{\left\vert U_{0}\right\vert ^{q}}{\left\Vert x\right\Vert
^{N+\theta d-Nd}}dx
\]%
\[
\int\limits_{%
\mathbb{R}
^{N}}\frac{\left\Vert \nabla V_{0}\right\Vert _{\ast}^{p}}{\left\Vert
x\right\Vert ^{\mu}}dx=\int\limits_{%
\mathbb{R}
^{N}}\left\Vert \nabla U_{0}\right\Vert _{\ast}^{p}dx.
\]
Hence%
\begin{align*}
&  \frac{\left(  \int\limits_{%
\mathbb{R}
^{N}}\left\vert v\right\vert ^{r}\frac{dx}{\left\Vert x\right\Vert ^{s}%
}\right)  ^{1/r}}{\left(  \int\limits_{%
\mathbb{R}
^{N}}\frac{\left\Vert \nabla v\right\Vert _{\ast}^{p}}{\left\Vert x\right\Vert
^{\mu}}dx\right)  ^{\frac{a}{p}}\left(  \int\limits_{%
\mathbb{R}
^{N}}\left\vert v\right\vert ^{q}\frac{dx}{\left\Vert x\right\Vert ^{\theta}%
}\right)  ^{\frac{1-a}{q}}}\leq\frac{\left(  d^{1+\frac{p-1}{p}r}\int\limits_{%
\mathbb{R}
^{N}}\frac{\left\vert T_{N,d,p}v\right\vert ^{r}}{\left\Vert x\right\Vert
^{N+sd-Nd}}dx\right)  ^{1/r}}{\left(  \int\limits_{%
\mathbb{R}
^{N}}\left\Vert \nabla T_{N,d,p}v\right\Vert _{\ast}^{p}dx\right)  ^{\frac
{a}{p}}\left(  d^{1+\frac{p-1}{p}q}\int\limits_{%
\mathbb{R}
^{N}}\frac{\left\vert T_{N,d,p}v\right\vert ^{q}}{\left\Vert x\right\Vert
^{N+\theta d-Nd}}dx\right)  ^{\frac{1-a}{q}}}\\
&  \leq d^{\frac{1}{r}+\frac{p-1}{p}-\frac{1-a}{q}-\frac{p-1}{p}\left(
1-a\right)  }\frac{\left(  \int\limits_{%
\mathbb{R}
^{N}}\frac{\left\vert U_{0}\right\vert ^{r}}{\left\Vert x\right\Vert
^{N+sd-Nd}}dx\right)  ^{1/r}}{\left(  \int\limits_{%
\mathbb{R}
^{N}}\left\Vert \nabla U_{0}\right\Vert _{\ast}^{p}dx\right)  ^{\frac{a}{p}%
}\left(  \int\limits_{%
\mathbb{R}
^{N}}\frac{\left\vert U_{0}\right\vert ^{q}}{\left\Vert x\right\Vert
^{N+\theta d-Nd}}dx\right)  ^{\frac{1-a}{q}}}\\
&  =\frac{\left(  \int\limits_{%
\mathbb{R}
^{N}}\left\vert V_{0}\right\vert ^{r}\frac{dx}{\left\Vert x\right\Vert ^{s}%
}\right)  ^{1/r}}{\left(  \int\limits_{%
\mathbb{R}
^{N}}\frac{\left\Vert \nabla V_{0}\right\Vert _{\ast}^{p}}{\left\Vert
x\right\Vert ^{\mu}}dx\right)  ^{\frac{a}{p}}\left(  \int\limits_{%
\mathbb{R}
^{N}}\left\vert V_{0}\right\vert ^{q}\frac{dx}{\left\Vert x\right\Vert
^{\theta}}\right)  ^{\frac{1-a}{q}}}.
\end{align*}
We note that we have the equality in the last row because $U_{0}$ and $V_{0}$
are $\left\Vert \cdot\right\Vert -$radial. Hence $V_{0}$ is a $\left\Vert
\cdot\right\Vert -$radial maximizer of $CKN_{2}.$
\end{proof}
\end{theorem}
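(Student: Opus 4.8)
The plan is to run the same reduction scheme used for Theorem~\ref{T4} (Lemmas~\ref{l4.3}--\ref{l4.4}), now in the non-Hilbertian setting, via the change of variables $T_{N,d,p}$ with $d=\frac{N-p}{N-p-\mu}$ supplied by Lemma~\ref{l1.1}. The decisive point is that for precisely this value of $d$ one has $d(p+\mu-N)+N-p=0$, so that after transformation the weight on the gradient term in Lemma~\ref{l1.1}(2) disappears: $\int_{\mathbb{R}^N}\|\nabla T_{N,d,p}v\|_{\ast}^p\,dx\le\int_{\mathbb{R}^N}\|\nabla v\|_{\ast}^p\,\|x\|^{-\mu}\,dx$, with equality if and only if $v$ is $\|\cdot\|$-radial. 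Under (C3) we also have $N+\theta d-Nd\le 0\le N+sd-Nd<N$, so the weights produced by Lemma~\ref{l1.1}(1) are of the admissible type for $CKN_1$, and $T_{N,d,p}$ sends the function space of $CKN_1$ to that of $CKN_2$.

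Concretely: let $U_0$ be a $\|\cdot\|$-radially symmetric maximizer of $CKN_1$ and set $V_0=T_{N,d,p}^{-1}U_0$, i.e.\ $U_0=T_{N,d,p}V_0$. Since $L_{N,d}$ carries each $\|\cdot\|$-sphere onto a $\|\cdot\|$-sphere, $V_0$ is again $\|\cdot\|$-radial. Given an arbitrary $v\in D^{p,q}_{\mu,\theta}(\mathbb{R}^N)$, put $u=T_{N,d,p}v$. Applying Lemma~\ref{l1.1}(1) with $f(w)=|w|^r$, $t=s$ and with $f(w)=|w|^q$, $t=\theta$ converts the two zero-order integrals exactly, up to the harmless constants $d^{1+\frac{p-1}{p}r}$ and $d^{1+\frac{p-1}{p}q}$, while Lemma~\ref{l1.1}(2) controls the gradient term as above. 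Chaining these, the $CKN_2$-quotient of $v$ is bounded above by $d^{\frac{1}{r}+\frac{p-1}{p}-\frac{1-a}{q}-\frac{p-1}{p}(1-a)}$ times the $CKN_1$-quotient of $u$, hence by the same power of $d$ times the $CKN_1$-quotient of $U_0$; and since $U_0$ and $V_0$ are both $\|\cdot\|$-radial, the single inequality used in passing from $V_0$ to $U_0$ — the gradient estimate — is in fact an equality, so this last quantity equals the $CKN_2$-quotient of $V_0$. Therefore $V_0$ attains $CKN_2$ and is $\|\cdot\|$-radial, as claimed. We never need the exact value of the power of $d$; its consistency on the two sides is automatic, and the hypothesis $0<a\le 1$ just guarantees that $CKN_2$ is the genuine interpolation quotient to which the scheme applies.

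The routine bookkeeping to be checked is: (i) that $T_{N,d,p}$ and $T_{N,d,p}^{-1}$ are well-defined and carry $D^{p,q}_{0,N+\theta d-Nd}(\mathbb{R}^N)$ onto $D^{p,q}_{\mu,\theta}(\mathbb{R}^N)$, which rests on $d>1$ so that $L_{N,d}(x)=\|x\|^{d-1}x$ is a homeomorphism of $\mathbb{R}^N$, smooth with positive Jacobian off the origin; (ii) a standard density argument extending the integral identities of Lemma~\ref{l1.1}, stated for continuous/smooth data, to the completed spaces. The only genuine subtlety — and the step to watch — is that equality in Lemma~\ref{l1.1}(2) requires $\|\cdot\|$-radial symmetry of the argument, which is exactly why we must feed in a $\|\cdot\|$-radial maximizer $U_0$; this is where the hypothesis of the theorem is used, and without it one would only recover $CKN_2\ge d^{(\cdots)}\,CKN_1$ rather than attainment. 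All the analytic difficulty has been absorbed into Lemma~\ref{l1.1}, so beyond these checks the argument is a direct transcription of the one in Section~3.
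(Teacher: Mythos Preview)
Your proposal is correct and follows essentially the same route as the paper's proof: start from a $\|\cdot\|$-radial maximizer $U_0$ of $CKN_1$, pull it back via $V_0=T_{N,d,p}^{-1}U_0$, use Lemma~\ref{l1.1}(1) to convert the zero-order integrals exactly (with the multiplicative constants $d^{1+\frac{p-1}{p}r}$ and $d^{1+\frac{p-1}{p}q}$), use Lemma~\ref{l1.1}(2) to control the gradient term (with equality precisely for $\|\cdot\|$-radial data), and conclude that $V_0$ attains $CKN_2$. One small remark: your aside invoking condition (C3) to pin down the signs of the transformed exponents is not needed here, since Theorem~\ref{T10} is stated conditionally (it \emph{assumes} $CKN_1$ has a radial maximizer) and the paper's proof does not use (C3); the integral identities of Lemma~\ref{l1.1} hold regardless of those signs.
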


As an application of Theorem \ref{T10}, to study the symmetry problem of
maximizers for CKN inequality (with the assumption that $\frac{N-p}{N-p-\mu
}>1$), we can assume that $\mu=0$.

\end{document}